\pgfplotsset{compat = 1.14}
\DeclarePairedDelimiter{\ceil}{\lceil}{\rceil}
\DeclarePairedDelimiter{\floor}{\lfloor}{\rfloor}
\definecolor{TomColBack}{HTML}{F62217}
\definecolor{TomCol}{HTML}{16EAF5}
\definecolor{mylilas}{HTML}{CC0099}
\definecolor{mygreen}{rgb}{0,0.6,0} 
\definecolor{mybackground}{HTML}{F7BE81}
\definecolor{redText}{rgb}{1,0,0}
\definecolor{blueText}{HTML}{0080FF}
\definecolor{greenText}{HTML}{00e600}
\definecolor{myred}{RGB}{220,43,25}
\definecolor{mygreen}{RGB}{0,146,64}
\definecolor{myblue}{RGB}{0,143,224}
\definecolor{MarkCol}{HTML}{B216FA}
\definecolor{MarkColBack}{HTML}{5FFB17}
\newcommandx{\Mark}[2][1=]{\todo[linecolor = MarkCol, backgroundcolor = MarkColBack!30, bordercolor = MarkColBack,#1]{#2}}
\tikzset{
myshape/.style={
  rectangle split,
  minimum height=1.5cm,
  rectangle split horizontal,
  rectangle split parts=8,
  draw,
  anchor=center,
  },
mytri/.style={
  draw,
  shape=isosceles triangle,
  isosceles triangle apex angle=60,
  inner xsep=0.65cm
  }
}
\newtheorem{theorem}{Theorem}[section]
\newtheorem{lemma}{Lemma}[section]
\newtheorem{proposition}{Proposition}[section]
\newtheorem{corollary}{Corollary}[section]
\newtheorem{remark}{Remark}[section]
\newtheorem{assumption}{Assumption}[section]
\DeclareMathOperator*{\argmax}{arg\,max}
\numberwithin{equation}{section}
\newtheorem{definition}{Definition}
\newcommand{\Sup}[1]{\raisebox{0.5ex}{\scalebox{0.8}{$\displaystyle \sup_{#1}\;$}}}
\begin{document}

\author[1]{\small Christianen, M.H.M.}
\author[4]{\small Cruise, J.}
\author[1]{\small Janssen, A.J.E.M.}
\author[4]{\small Shneer, S.}
\author[1,2]{\small Vlasiou, M.}
\author[1,3]{\small Zwart, B.}

\affil[1]{\footnotesize Eindhoven University of Technology}
\affil[2]{\footnotesize University of Twente}
\affil[3]{\footnotesize Centrum Wiskunde \& Informatica}
\affil[4]{\footnotesize Heriot-Watt University}

\title{Comparison of stability regions for a line distribution network with stochastic load demands}
\date{}
\maketitle

\begin{abstract}
We compare stability regions for different power flow models in the process of charging electric vehicles (EVs) by considering their random arrivals, their stochastic demand for energy at charging stations, and the characteristics of the electricity distribution network. We assume the distribution network is a line with charging stations located on it. We consider the \emph{Distflow} and the \emph{Linearized Distflow} models 
and we assume that EVs have an exponential charging requirement, that voltage drops on the distribution network stay under control and that the number of charging stations $N$ goes to infinity. We investigate the stability of utility-optimizing power allocations in large distribution networks for both power flow models by controlling the arrival rate of EVs to charging stations. For both power flow models, we show that, to obtain stability, the maximum feasible arrival rate, i.e. stability region of vehicles is decaying as $1/N^2$, and the difference between those arrival rates is up to constants, which we compare explicitly.
\end{abstract}

\section{Introduction}
The popularity of electric vehicles (EVs) has been growing due to their increased range, lower cost of batteries, and governmental subsidies. However, EVs need to be charged and the current infrastructure cannot support their increasing demand for energy. This can therefore cause capacity problems in distribution networks in the (very near) future \cite{Hoogsteen2017}. Modifying existing infrastructure is costly and constrained by the limits of the distribution network. Thus, we need a mechanism that guarantees the quality of service provided to EV drivers, subject to distribution network constraints.

Motivated by this, we consider EVs charging in a neighborhood such that voltage drops on the distribution network stay under control. EVs arrive randomly (at charging stations) to get charged and have different stochastic demands for energy. Moreover, the charging rates allocated to EVs depend on the number of cars charging simultaneously in the neighborhood. We model this process as a queue, with EVs representing \emph{jobs}, and charging stations classified as \emph{servers}, constrained by the physical limitations of the distribution network.

We are interested in conditions guaranteeing stability of the queuing model under distribution network constraints. Whenever we write stability, we mean stability of the queuing model, unless stated otherwise. In our setting, we assume that cars arrive at the same rate at each charging station and we want to find the maximum feasible arrival rate in terms of the number of charging stations $N$ such that voltage drops are within constraints. This is very challenging due to the uncertainty of the stochastic electricity demand of cars. Another difficult aspect is the modeling of the power flow dynamics in the distribution network. In this paper, we approximate the power flow dynamics in the distribution network by the \emph{Distflow} and the \emph{Linearized Distflow} models. The main stability results (Theorems \ref{THM:LINDIST} and \ref{THM:DISTFLOW}) show, on the one hand, that for large distribution networks (to obtain stability) the maximal feasible arrival rate is decaying as $1/N^2$, and on the other hand that the difference between these arrival rates is up to constants, which we compare explicitly.
\paragraph{Distribution networks}
An electric grid is a connected network that transfers electricity from producers to consumers. It consists of generating stations that produce electric power, high voltage transmission lines that carry power from distant sources to demand centers, and distribution lines that connect individual customers, e.g., houses, charging stations, etc. We focus on a network that connects a generator to charging stations with only distribution lines. Such a network is called a \emph{distribution network}.

An important constraint in a distribution network is the requirement of keeping voltage drops on a line under control. Voltage drop is the difference between voltages at subsequent charging stations. Distribution lines have an impedance, which results to voltage loss during transportation. The maximum allowed voltage drop ensures that every customer receives safe and reliable energy at a voltage that is within some standard range, which varies from one country to another\ \cite{Kerstinga}.

In the rest of the paper, we assume that the distribution network, consisting of one generator, several charging stations and distribution lines, has a line topology. The generator that produces electricity is called the \emph{root node}. Charging stations consume power and are called the \emph{load nodes}. Thus, we represent the distribution network by a graph (here, a line) with a root node, load nodes, and edges representing the distribution lines. Actually, the model does not depend on the load demand being from EV drivers to charge their cars or not. Any stochastic load demand fits this framework.

\paragraph{Power flow models} In order to model the power flow in the network, we use approximations of the alternating current (AC) power flow equations \cite{Molzahn2019}. These power flow equations characterize the steady-state relationship between power injections at each node, the voltage magnitudes, and phase angles that are necessary to transmit power from generators to load nodes. First, we study a load flow model known as the \emph{branch flow model} or the \emph{Distflow model} \cite{Low2014d,Baran1989} and additionally, a linearized version of the Distflow model called the \emph{Linearized Distflow} model. Both power flow models focus on quantities such as complex current and power flowing on the distribution lines. Due to the specific choice for the distribution network as a line, both power flow models have a recursive structure, which we can exploit. The accuracy and effectiveness of the Linearized Distflow model has been numerically justified in the literature \cite{Baran1989}. Its use is justified by the fact that the nonlinear terms in the equations of the Distflow model represent the losses that appear if electric power is transferred over the lines. These losses should be, in practice, much smaller than the active and reactive power terms that show up in the equations. 
In this paper, we show that the impact of neglecting these losses, in terms of stability, are insignificant if the network is large.

\paragraph{Stability conditions for EV charging}
Customers can charge their EVs at charging stations. In practice, EVs are served simultaneously, because they require concurrent usage of all upstream distribution lines between the location of an EV and the generator of the distribution network. We can adequately model the arrival and charging of cars as a resource-sharing network by the use of \emph{bandwidth-sharing networks}. Bandwidth-sharing networks are a specific class of queuing networks where the service capacity is shared among all concurrent users. Bandwidth-sharing networks have been successfully applied in communication networks \cite{Massoulie1999}.

The literature on stability results for EV-charging is limited to numerical experiments. An early paper on stability analysis in EV-charging is \cite{Huang2013}, which presents a new quasi-Monte Carlo stability analysis method to assess the dynamic effects of plug-in electric vehicles in power systems. Huang et al. conclude that improvements for stability control are worth further study since the number of EVs is growing.

Simulation  studies are conducted to obtain stability conditions in \cite{Carvalho2015b}. Here, Carvalho et al. find that there is a threshold value $\lambda_c$ on the arrival rates, such that if the actual arrival rate $\lambda$ is greater than this threshold, i.e. $\lambda>\lambda_c$, some vehicles have to wait for increasingly long times to fully charge. Similar findings were obtained in \cite{Zhang2016,DeHoog2014,Ul-Haq2015,Dharmakeerthi2014}. Up to a certain point, the distribution network is able to serve all EVs properly, but if the number of EVs in the system is too high, it cannot be guaranteed that, e.g., the voltage drops stay under control.

The present paper builds upon \cite{Aveklouris2019}. In this study, Aveklouris et al. consider a distribution network used to charge EVs such that voltage drops stay under control, taking into account randomness of future arriving EVs and power demands. The work focuses on a fluid approximation for the number of uncharged EVs, while we focus on conditions to ensure stability of the queuing model as in \cite{Shneer2018,Shneer2019c}. These studies establish stability for networks of interacting queues governed by utility-maximizing service-rate allocations using direct applications of Lyapunov-Foster-type criteria. Our queuing network fits their general model framework, so using results from \cite{Shneer2018,Shneer2019c}, provides a way to obtain stability conditions for our queuing model. Furthermore, we use differential and integral calculus to study the difference between maximal feasible arrival rates obtained under the Linearized Distflow and the Distflow models. The main conclusion that we draw is that these rates are the same as the size of the distribution network increases, up to constants.

The structure of the paper is as follows. In Section \ref{sec:model_description}, we provide a detailed model description. In particular, we introduce the queuing model, the distribution network model and the power flow models. In Section \ref{sec:main_results}, we find stability regions for both power flow models. The stability results are presented in Sections \ref{subsec:lindist} and \ref{subsec:distflow}. The comparison between the stability regions of both power flow models is made in Section \ref{SUBSEC:COMPARISON}. The aforementioned stability results under the Linearized Distflow and the Distflow models are proven in Sections \ref{section:proof_lindist} and \ref{section:proof_distflow}, respectively.  From an engineering point of view, the computation of the stability region for the Distflow model, as in Section \ref{subsec:distflow}, can also be done numerically, via an iterative approach. Therefore, we show how to use Newton's method to compute the maximal feasible arrival rate under the Distflow model in Section \ref{sec:improvement}. The rest of the paper focuses on proofs and a theorem that are used in Sections \ref{subsec:distflow}, \ref{section:proof_lindist} and \ref{section:proof_distflow}.

In Section \ref{subsec:power_flow}, we derive ways to establish if a given power allocation is feasible under both power flow models. For the Linearized Distflow model we can directly use a recursion to compute the voltages at all nodes, however, for the Distflow model this is not possible. Therefore, in Appendix \ref{sec:equivalence}, we prove an equivalence for the voltages (under the Distflow model) at the root node and the node at the end of the line, as in \cite{Vasmel2019}, such that we can use a recursion to compute the voltages at all nodes. Furthermore, our paper uses results from \cite{Shneer2018,Shneer2019c}. Therefore, in Appendix \ref{sec:stability_result}, we consider the general model framework in \cite{Shneer2018} and show that we can drop one of their assumptions and still apply their theorem indicating stability, which we used in Sections \ref{subsec:lindist} and \ref{subsec:distflow}. Further, an important step in the proof in Section \ref{section:proof_distflow} is the approximation of a scaled version of the voltages under the Distflow model in Section \ref{subsec:approximation_continuous counterpart}. In
Appendix \ref{sec:conv_VN_to_V(1)}, we present the proof of the convergence of a scaled version of the voltages under the Distflow model to the solution of an integral equation and the numerical validation of this convergence. Furthermore, the solution to this integral equation is given in Appendix \ref{sec:integral_equation}. Last, in Appendix \ref{sec:appendix_comparison}, we proof the result to compare the stability regions under both power flow models.

In the remainder of this section, we list the notation we use throughout the paper. An overview of notations is also given in Appendix \ref{sec:notation}.

\paragraph{Notation} 


All vectors and matrices are denoted by bold letters. Vector inequalities hold coordinate-wise; namely, $\mathbf{x}>\mathbf{y}$ implies that $x_i>y_i$ for all $i$. The imaginary unit is denoted by $\mathrm{i}$ and the absolute value of a complex number $z=x+\mathrm{i} y$ is $\left|z\right|=\sqrt{x^2+y^2}$. Also, the complex conjugate $x-\mathrm{i}y$ of $z$ is denoted by $\overline{z}$.

The following operations are defined on $x,y\in\mathbb{R}$:
\begin{align*}
\floor{x} &:=\max\{m\in\mathbb{Z}:m\leq x\}, \\
\ceil{x} &:=\min\{n\in\mathbb{Z}:n\geq x\}.
\end{align*}
Denote the space of functions $f:[0,T]\to\mathbb{R}$ that are right-continuous with left limits, i.e. c\`{a}dl\`{a}g functions, by
\begin{align*}
\mathbf{D}[0,T].
\end{align*} Furthermore, we define the space $\mathbf{D}_{\geq 1}[0,T]$ as 
\begin{align*}
\mathbf{D}_{\geq 1}[0,T] := \{f\in \mathbf{D}[0,T]: \inf_{t\in[0,T]} f(t)\geq 1.\}.
\end{align*}
For a function $f(\cdot)$ defined on (a subset of) $\mathbb{R}$, $f'(t)$ denotes its derivative at $t$ and $f''(t)$ its second derivative at $t$, when these exist.

\section{Model description}\label{sec:model_description}
This section describes the three main components of the EV-charging model. In Section \ref{subsec:queueing}, we describe the characteristics of the queuing model; i.e., the evolution of the number of cars charging at each charging station. In Section \ref{subsec:distribution}, we specify the distribution network model and in Section \ref{subsec:power_flow}, we introduce the Distflow and the Linearized Distflow models.

\subsection{Queuing model of EV-charging}\label{subsec:queueing}
We use a queuing model to study the process of charging EVs in a distribution network. In this setting, EVs, referred to as jobs, require service. This service is delivered by charging stations, referred to as servers. The service being delivered is the power supplied to EVs.

In the queuing system, we consider $N$ single-server queues, each having its own arrival stream of jobs. Denote by $\mathbf{X}(t) = (X_1(t),\ldots,X_N(t))$ the vector giving the number of jobs at each queue at time $t$. We make the following assumption on the arrival rates and service requirements of all EVs.
\begin{assumption}\label{assumption:arrival_rate}
At all charging stations, all EVs arrive independently according to Poisson processes with the same rate $\lambda$ and have independent service requirements which are $Exp(1)$ random variables.
\end{assumption} 

\begin{remark}\label{remark:equal_arrival_rates} In Appendix \ref{sec:stability_result}, we state the stability condition as in \cite{Shneer2018}. The proof is given for general arrival rates, however in order to compare the stability regions under both power flow models, we focus on equal arrival rates for all nodes.
\end{remark}
At each queue, all jobs are served simultaneously and start service immediately (there is no queuing). Furthermore, each job receives an equal fraction of the service capacity allocated to a queue. Denote by $\mathbf{\tilde{p}}(t) = (\tilde{p}_1(t),\ldots,\tilde{p}_N(t))$ the vector of service capacities allocated to each queue at time $t$. This represents the active power that is allocated to each node. This means that at each queue $j$, it takes 1/$\tilde{p}_j(t)$ time units to serve one job. In our model however, service capacities are state-dependent and subject to changes, and the dynamics are more complicated. See below for details.

We can then represent the number of electric vehicles charging at every station as an $N$-dimensional continuous-time Markov process. The evolution of the queue at node $j$ is given by
\begin{align*}
X_j(t) \to X_j(t)+1~\text{at rate}~\lambda
\end{align*} and
\begin{align*}
X_j(t) \to X_j(t)-1~\text{at rate}~\tilde{p}_j(t).
\end{align*}
From now on, for simplicity, we drop the dependence on time $t$ from the notation. For example, we write $X_j$ and $\tilde{p}_j$ instead of $X_j(t)$ and $\tilde{p}_j(t)$. We assume that the rates $\mathbf{\tilde{p}}$ may be allocated according to the current vector $\mathbf{X}=(X_1,\ldots,X_N)$ of number of jobs.

A popular class of policies in the context of bandwidth-sharing networks are $\alpha$-fair algorithms \cite{Bonald2006a}. In state $\mathbf{X}$, an $\alpha$-fair algorithm allocates $\tilde{p}_j/X_j$ to each EV at charging station $j$, with $\mathbf{\tilde{p}}=(\tilde{p}_1,\ldots,\tilde{p}_N)$ the solution of the utility optimization problem
\begin{align}
\mathbf{\tilde{p}} \in \argmax \sum_{j=1}^N X_j U_j^{(\alpha)}\left(\frac{\tilde{p}_j}{X_j}\right),\label{eq:argmax_p}
\end{align} subject to physical constraints on the vector $\mathbf{\tilde{p}}$ of allocated power and where
\begin{align*}
U_j^{(\alpha)}(x_i) =
\begin{cases}
\log x_i &\ \text{if}\ \alpha=1, \\
x_i^{1-\alpha}/(1-\alpha) & \ \text{if}\ \alpha\in(0,\infty)\backslash\{1\},
\end{cases},\ x_i\geq 0.
\end{align*} The parameter $\alpha$ measures the degree of fairness of the allocation. Popular choices are $\alpha\to 0$, where the total allocated power tends to be maximized, but the allocation is very unfair. The choice $\alpha=1$,  where we end up with proportional fairness, that tends to maximize the utility of the total power allocation or $\alpha=2$, that corresponds to the minimum potential delay allocation, where the total charging time tends to be minimized. Last, we have the limiting case $\alpha\to\infty$, where the minimum of the power allocated to any charging station tends to maximized, namely max-min fairness \cite{Massoulie1999}.

\subsection{Distribution network model}\label{subsec:distribution}
The distribution network is modeled as a directed graph $\mathcal{G}=(\mathcal{I},\mathcal{E})$, where we denote by $\mathcal{I} = \{0,\ldots,N\}$ the set of nodes and by $\mathcal{E}$ its set of directed edges, assuming that node $0$ is the root node. We assume that $\mathcal{G}$ has a line topology. Each edge $\epsilon_{j-1,j}\in\mathcal{E}$ represents a line connecting nodes $j-1$ and $j$ where node $j$ is further away from the root node than node $j-1$. Each edge $\epsilon_{j-1,j}\in\mathcal{E}$ is characterized by the impedance $z=r+\mathrm{i} x$, where $r,x\geq 0$ denote the resistance and reactance along the lines, respectively.
Here, we have assumed that the values of the resistance and reactance
along all edges are the same.
\begin{assumption}
All edges have the same resistance value $r$.
\end{assumption} 
We assume that the phase angle between voltages $
\tilde{V}_i$ and $\tilde{V}_j$ is small in distribution networks \cite{Carvalho2015b}, and hence the phases of $
\tilde{V}_i$ and $
\tilde{V}_j$ are approximately the same and can be chosen so that the phasors have zero imaginary components. 
For $j\in\mathcal{I}$, $\tilde{V}_j$ denotes the real voltage and it emerges that the impedance is zero, thus $z=r$ and all edges have the same resistance value $r$.
\begin{assumption}
The voltage $\tilde{V}_j$ at every charging station $j=1,\ldots,N$ is a positive real number.
\end{assumption}
Furthermore, let $\tilde{s}_j = \tilde{p}_j + \mathrm{i} \tilde{q}_j$ be the complex power consumption at node $j$. Here, $\tilde{p}_j$ and $\tilde{q}_j$ denote the active and reactive power consumption at node $j$, respectively (cf. \eqref{eq:argmax_p}). By convention, a positive active (reactive) power term corresponds to consuming active (reactive) power. Since EVs can only consume active power \cite{Carvalho2015b}, it is natural to make the following assumption.
\begin{assumption}
The active power is non-negative and the reactive power is zero at all charging stations.
\end{assumption}
For each $\epsilon_{j-1,j}\in \mathcal{E}$, let $I_{j-1,j}$ be the complex current and $\tilde{S}_{j-1,j}=\tilde{P}_{j-1,j}+\mathrm{i}\tilde{Q}_{j-1,j}$ be the complex power flowing from node $j-1$ to $j$. Here, $\tilde{P}_{j-1,j}$ and $\tilde{Q}_{j-1,j}$ denote the active and reactive power flowing from node $j-1$ to $j$.
The model is illustrated in Figure \ref{fig:model}.
\begin{figure}[h!]
\begin{center}
\begin{tikzpicture}[main_node/.style={circle,fill=blue!10,minimum size=1em,inner sep=4pt},
feeder_node/.style={ellipse,fill=yellow!10,minimum size=3em,inner sep=4pt},]

    \node[feeder_node][label={$\tilde{V}_0$}] (1) at (0,0) {Root node};
    \node[main_node][label={$\tilde{V}_1$}] (2) at (4,0)  {1};
    \node[main_node][label={$\tilde{V}_i$}] (3) at (8,0) {i};
    \node[main_node][label={$\tilde{V}_N$}] (4) at (12,0) {N};
    \node[below=1.2cm of 2](D){};
    \node[below=1.2cm of 3](E){};
    \node[below=1.2cm of 4](F){};

    \draw (1.4,0) -- (3.7,0);
    \draw[dashed] (4.3,0) -- (7.7,0)node[midway,above]{$(\tilde{S}_{ij},I_{ij},r)$};
    \draw[dashed] (8.3,0) -- (11.7,0);
    \Edge[Direct,label=$\lambda$](D)(2);
    \Edge[Direct,label=$\lambda$](E)(3);
    \Edge[Direct,label=$\lambda$](F)(4);
\end{tikzpicture}
\end{center}
\caption{Line network with $N$ charging stations and arriving vehicles at rate $\lambda$.}
\label{fig:model}
\end{figure}
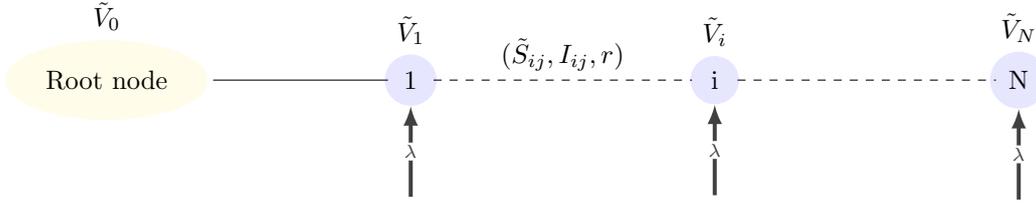

The distribution network constraints are described by a set $\mathcal{C}$. The set $\mathcal{C}$ is contained in an $N$-dimensional vector space and represents feasible power allocations. In our setting, a power allocation is feasible if the maximal voltage drop; i.e., the relative difference between the base voltage $\tilde{V}_0$ and the minimal voltage in all buses between the root node and any other node is bounded by some value $\Delta\in (0,\frac{1}{2}]$; i.e.,
\begin{align}
\frac{\tilde{V}_0-\min_{1\leq j\leq N} \tilde{V}_j}{\tilde{V}_0}\leq \Delta.\label{eq:voltage_drop_C}
\end{align} In Section \ref{subsec:distflow}, we show there is a technical reason why we use the explicit interval $(0,\frac{1}{2}]$ for $\Delta$. However, this is not out of the physical realm, since in practice, we want the maximal voltage drop to be no more than a small percentage of the base voltage, e.g. $\Delta = 0.05$ or $\Delta=0.1$. After the introduction of the power flow models, we give a more concrete definition of the constraint set $\mathcal{C}$ in Section \ref{subsubsec:power_flow_summary}.


\subsection{Power flow models}\label{subsec:power_flow} We introduce two commonly used models to represent the power flow that are valid for radial systems; i.e., systems where all charging stations have only one (and the same) source of supply. 
They are called the \emph{Distflow} and \emph{Linearized Distflow model} \cite{Low2014d,Baran1989}. Both models are valid when the underlying network topology is a tree, which is indeed the case in this paper (as we consider a line topology).  Moreover, we show that for a line topology the power flow model equations can be rewritten recursively for the voltages. For an overview of other representations of power flow, we refer the reader to \cite[Chapter 2]{Molzahn2019}.

Given the impedance $r$, 
the voltage at the root node $\tilde{V}_0$ and the power consumptions $\tilde{p}_j, j=1,\ldots,N$, both power flow models satisfy three relations. First, we have power balance at each node: for all $j\in\mathcal{I}\backslash\{0\}$,
\begin{align}
\tilde{S}_{j-1,j}-r\left| I_{j-1,j}\right|^2 = \tilde{s}_j + \tilde{S}_{j,j+1}.\label{eq:power_flow_equations}
\end{align} Here, on the one hand, the quantity $r|I_{j-1,j}|^2$ represents line loss so that $\tilde{S}_{j-1,j}-r|I_{j-1,j}|^2$ is the receiving-end complex power at node $j$ from node $j-1$. On the other hand,  the delivering-end complex power is the sum of the consumed power at node $j$ and the complex power flowing from node $j$ to node $j+1$. Second, by Ohm's law, we have for each edge $\epsilon_{j-1,j} \in \mathcal{E}$,
\begin{align}
\tilde{V}_{j-1} - \tilde{V}_j = rI_{j-1,j}\label{eq:ohm}
\end{align} and third, due to the definition of complex power, we have for each edge $\epsilon_{j-1,j} \in \mathcal{E}$,
\begin{align}
\tilde{S}_{j-1,j} = \tilde{V}_{j-1}\overline{I}_{j-1,j}.\label{eq:conjugate}
\end{align} Decomposing Equation \eqref{eq:power_flow_equations} in real and imaginary parts  leads to Equations \eqref{eq:active_power} and  \eqref{eq:reactive_power}, and rewriting Equations \eqref{eq:ohm} and \eqref{eq:conjugate} leads to Equations \eqref{eq:voltage_magnitude} and \eqref{eq:current_magnitude}, respectively:
\begin{alignat}{2}
\tilde{p}_j & = \tilde{P}_{j-1,j}-r\left|I_{j-1,j}\right|^2-\tilde{P}_{j,j+1}, &\quad j\in\{1,\ldots,N\}, \label{eq:active_power} \\
\tilde{q}_j & = \tilde{Q}_{j-1,j}-\tilde{Q}_{j,j+1}, &\quad j\in\{1,\ldots,N\}, \label{eq:reactive_power} \\
\tilde{V}_j^2 & = \tilde{V}_{j-1}^2-2r\tilde{P}_{j-1,j}+\left|r\right|^2\left|I_{j-1,j}\right|^2, &\quad \epsilon_{j-1,j}\in\mathcal{E}, \label{eq:voltage_magnitude}\\
\left|I_{j-1,j}\right|^2 & = \frac{\left|\tilde{S}_{j-1,j} \right|^2}{\tilde{V}_{j-1}^2}, &\quad \epsilon_{j-1,j}\in\mathcal{E}, \label{eq:current_magnitude}
\end{alignat} with the understanding that when $j$ is a leaf node $\tilde{P}_{j,j+1}=0$ and $\tilde{Q}_{j,j+1}=0$ in \eqref{eq:active_power} and \eqref{eq:reactive_power}, respectively. In case of the Distflow model, we rewrite Equations \eqref{eq:active_power}--\eqref{eq:current_magnitude} to obtain recursive expressions in terms of the voltages at the nodes, while, in case of the Linearized Distflow model, we simplify Equations \eqref{eq:active_power}--\eqref{eq:current_magnitude} first and then find recursive equations for the voltages.
Furthermore, to avoid a notational issue, we denote the expressions for the different models using the superscripts $D$ (Distflow model equations) and $L$ (Linearized Distflow equations). If there is no superscript specified, the expressions hold for both models.

The goal in Sections \ref{subsec:lindist} and \ref{subsec:distflow} is to derive equations that allow us to compute if a given power allocation $\tilde{\mathbf{p}}$ is feasible or not under both power flow models (given properties of the distribution network, such as the impedance $r$ 
and base voltage $\tilde{V}_0$). 

\subsubsection{Distflow}\label{subsec:distflow}
The first power flow model we discuss is called the \emph{Distflow} model. This model was first proposed in \cite{Baran1989a} and \cite{Baran1989} and derived under the assumption that the underlying network topology is a tree. Further simplifying the network topology into a line and assuming that load nodes consume active power only, direct manipulation and rewriting of Equation \eqref{eq:voltage_magnitude} with the help of Equations \eqref{eq:active_power},\eqref{eq:reactive_power}, and \eqref{eq:current_magnitude} yield a second-order recursive relation between the voltages,
\begin{align}
\tilde{V}_{N-1}^D & = \tilde{V}_N^D + \frac{r\tilde{p}_N}{V_N^D},\label{eq:V_N-1}\\
\tilde{V}_{j-1}^D & = 2\tilde{V}_j^D-\tilde{V}_{j+1}^D+\frac{r\tilde{p}_j}{\tilde{V}_j^D}, \quad j = 1,\ldots,N-1 \label{eq:V_j-1D}
\end{align} 
where the voltage at the root node, $\tilde{V}_0^D$, and the charging rates $\tilde{p}_j$ for $j=1,\ldots,N$ are given. 
Relabeling Equations \eqref{eq:V_N-1} and \eqref{eq:V_j-1D} by ${V_j}^D := \tilde{V}_{N-j}^D$ and $p_j:=\tilde{p}_{N-j}$ results in a new recursive relation such that each further term of the sequence is defined as a function of the preceding terms. Notice that in terms of the new variables ${V_N}^D$ and for $j=0,\ldots,N-1$, $p_j$  are given. 
Thus, we have the following second-order recursive relation between the voltages,
\begin{align}
{V_{1}}^D & = {V_0}^D + \frac{rp_0}{{V_0}^D}, \label{eq:DF1}\\
{V_{j+1}}^D & = 2{V_j}^D-{V_{j-1}}^D+\frac{rp_j}{{V_j}^D}, \quad j = 1,\ldots,N-1 \label{eq:DF2}
\end{align} where ${V_N}^D$ is known.

Given the recurrence relation in \eqref{eq:DF1} and \eqref{eq:DF2}, it is not straightforward to find out if a given power allocation $\mathbf{\tilde{p}}$ is feasible or not. We establish in Lemma \ref{lemma:V_increasing} that the sequence of voltages ${V_j}^D, j=0,\ldots,N$ is an increasing sequence. Thus, computing if a given power allocation is feasible, requires finding a voltage ${V_0}^D$ such that the known voltage ${V_N}^D$ satisfies the maximal voltage drop constraint as in Equation \eqref{eq:voltage_drop_C}.

However, in what follows, we find an equivalence of the recursive relations in \eqref{eq:DF1} and \eqref{eq:DF2} between the situation where the voltage ${V_N}^D$ is known and the situation where the voltage ${V_0}^D$ is known, such that the voltage drop constraint is satisfied. In the latter situation, one can find out easily if a given power allocation $\mathbf{p}$ (with distribution network parameter $r$ and voltage ${V_N}^D$) is feasible: initialize the recursion with ${V_0}^D>0$ and iterate through \eqref{eq:DF1} and \eqref{eq:DF2} and check, using the voltage ${V_N}^D$, if the maximal voltage drop constraint is satisfied.

To prove the desired equivalence, we follow the approach in \cite[Chapter 5]{Vasmel2019}. We first derive an alternative expression for the voltages under the Distflow model as a double summation, instead of the voltages given by Equations \eqref{eq:DF1} and \eqref{eq:DF2}. Then, we show that the voltage at node $j$, for $j=0,\ldots,N$, as a function of the voltage ${V_0}^D$ is an increasing function. The proofs of Lemmas \ref{lemma:alternative_Vn} and \ref{lemma:v_increasing} and Proposition \ref{prop:equivalence} can be found in Appendix \ref{sec:equivalence}.

In Lemma \ref{lemma:alternative_Vn}, we derive an alternative expression for the voltages under the Distflow model.
\begin{lemma}\label{lemma:alternative_Vn} Let ${V_j}^D, j=0,1,\ldots,N$, be as in \eqref{eq:DF1} and \eqref{eq:DF2}. Then,
\begin{enumerate}
\item \begin{align}
{V_{j+1}}^D-{V_j}^D=\sum_{i=0}^j \frac{rp_i}{{V_i}^D},\quad j = 0,\ldots,N-1,\label{eq:alternative_first_sum}
\end{align}
\item \begin{align}
{V_{j}}^D = {V_0}^D + \sum_{n=0}^{j-1} \sum_{i=0}^n \frac{rp_i}{{V_i}^D},\quad j = 0,\ldots,N.\label{eq:alternative_second_sum}
\end{align}
\end{enumerate}
\end{lemma}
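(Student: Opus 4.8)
The plan is to prove part 1 by induction on $j$ and then obtain part 2 by telescoping. Throughout I assume the voltages ${V_j}^D$ are well-defined and nonzero (so that the terms $rp_i/{V_i}^D$ make sense), which is guaranteed by the recursion \eqref{eq:DF1}--\eqref{eq:DF2} together with the monotonicity established in Lemma \ref{lemma:V_increasing}.

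For part 1, the base case $j=0$ is exactly \eqref{eq:DF1}: ${V_1}^D-{V_0}^D = rp_0/{V_0}^D = \sum_{i=0}^0 rp_i/{V_i}^D$. For the inductive step, suppose the identity \eqref{eq:alternative_first_sum} holds for some index $j-1$ with $1\le j\le N-1$, i.e. ${V_j}^D-{V_{j-1}}^D=\sum_{i=0}^{j-1} rp_i/{V_i}^D$. Using the second-order recursion \eqref{eq:DF2},
\begin{align*}
{V_{j+1}}^D-{V_j}^D
&= \Bigl(2{V_j}^D-{V_{j-1}}^D+\frac{rp_j}{{V_j}^D}\Bigr)-{V_j}^D
= \bigl({V_j}^D-{V_{j-1}}^D\bigr)+\frac{rp_j}{{V_j}^D} \\
&= \sum_{i=0}^{j-1}\frac{rp_i}{{V_i}^D}+\frac{rp_j}{{V_j}^D}
= \sum_{i=0}^{j}\frac{rp_i}{{V_i}^D},
\end{align*}
which is \eqref{eq:alternative_first_sum} for index $j$. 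This closes the induction and proves part 1 for all $j=0,\ldots,N-1$.

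For part 2, write the difference ${V_j}^D-{V_0}^D$ as a telescoping sum of consecutive increments and substitute part 1:
\begin{align*}
{V_j}^D-{V_0}^D = \sum_{n=0}^{j-1}\bigl({V_{n+1}}^D-{V_n}^D\bigr)
= \sum_{n=0}^{j-1}\sum_{i=0}^{n}\frac{rp_i}{{V_i}^D},
\end{align*}
valid for $j=0,\ldots,N$ (the case $j=0$ giving the empty sum). Rearranging yields \eqref{eq:alternative_second_sum}.

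I do not expect any genuine obstacle here: both parts are direct consequences of the recursion, and the only point requiring a word of care is that the quantities ${V_i}^D$ appearing in denominators are nonzero, which is why the argument is placed after (or alongside) the monotonicity lemma. If one prefers a self-contained version, one can alternatively prove parts 1 and 2 simultaneously by a single induction on $j$, carrying the statement ``${V_{j+1}}^D-{V_j}^D=\sum_{i=0}^j rp_i/{V_i}^D$ and ${V_j}^D={V_0}^D+\sum_{n=0}^{j-1}\sum_{i=0}^n rp_i/{V_i}^D$'' as the inductive hypothesis, but the two-step argument above is cleaner.
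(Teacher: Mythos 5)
Your proof is correct and follows essentially the same route as the paper: both obtain the first-difference identity from the recursion \eqref{eq:DF2} (your induction is just the summed/telescoped form of the paper's argument) and then telescope again to get part 2. The only cosmetic difference is your explicit remark about nonvanishing denominators, which the paper leaves implicit.
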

Now that we have an alternative expression for the voltages under the Distflow model, we show, in Lemma \ref{lemma:v_increasing} that the voltages under the Distflow as a function of the voltage ${V_0}^D$ are increasing functions. To this end, we let the voltage ${V_0}^D$ be positive and define $v_j: \mathbb{R}_+ \to \mathbb{R}_+$ be the function defined by the equation $v_j({V_0}^D) = {V_j}^D$.
\begin{lemma}\label{lemma:v_increasing}
Let the voltage ${V_0}^D$ be positive $({V_0}^D>0)$. If ${V_N}^D\leq 2{V_0}^D$, then
\begin{align*}
0\leq \frac{dv_j}{d{V_0}^D}\leq 1,
\end{align*} for all $j=0,\ldots,N$.
\end{lemma}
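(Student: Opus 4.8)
The plan is to differentiate the closed-form expression for the voltages from Lemma \ref{lemma:alternative_Vn} and then establish the two inequalities $0\le w_j\le 1$ simultaneously by induction on $j$, where $w_j := dv_j/d{V_0}^D$. As a preliminary I would note that each $v_j$ is a well-defined, strictly positive, continuously differentiable function of ${V_0}^D$ on $(0,\infty)$: the recursion \eqref{eq:DF1}--\eqref{eq:DF2} divides only by the $v_i$, and $v_j({V_0}^D)\ge v_0({V_0}^D)={V_0}^D>0$ by \eqref{eq:alternative_first_sum} (equivalently, by the monotonicity of the voltage sequence in Lemma \ref{lemma:V_increasing}), so term-by-term differentiation is legitimate. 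Differentiating \eqref{eq:alternative_second_sum} with respect to ${V_0}^D$ and counting, for each index $i$, the number of inner sums in which it occurs, gives
\begin{align*}
w_0 = 1,\qquad w_j = 1 - \sum_{i=0}^{j-1}(j-i)\,\frac{rp_i}{v_i^2}\,w_i,\quad j=1,\ldots,N,
\end{align*}
which expresses $w_j$ through $w_0,\dots,w_{j-1}$ only.

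The induction is then short. Suppose $0\le w_i\le 1$ for all $i<j$. Since $r,p_i\ge 0$ and $w_i\ge 0$, every term subtracted from $1$ on the right-hand side is nonnegative, whence $w_j\le 1$. For the lower bound I would use $w_i\le 1$ and $v_i\ge v_0$ to estimate
\begin{align*}
w_j \;\ge\; 1 - \sum_{i=0}^{j-1}(j-i)\,\frac{rp_i}{v_i^2} \;\ge\; 1 - \frac{1}{v_0}\sum_{i=0}^{j-1}(j-i)\,\frac{rp_i}{v_i} \;=\; 1 - \frac{v_j-v_0}{v_0} \;=\; \frac{2v_0-v_j}{v_0},
\end{align*}
the first equality being exactly \eqref{eq:alternative_second_sum} read backwards, since $\sum_{i=0}^{j-1}(j-i)\,rp_i/v_i=\sum_{n=0}^{j-1}\sum_{i=0}^{n}rp_i/v_i=v_j-v_0$. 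Finally $v_j={V_j}^D\le{V_N}^D\le 2{V_0}^D=2v_0$ by Lemma \ref{lemma:V_increasing} and the standing hypothesis ${V_N}^D\le 2{V_0}^D$, so $w_j\ge 0$. With the trivial base case $w_0=1$, strong induction on $j$ finishes the proof.

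I expect the crux to be twofold. First, the two bounds are genuinely coupled: proving $w_j\le 1$ needs $w_i\ge 0$ for $i<j$, while proving $w_j\ge 0$ needs $w_i\le 1$ for $i<j$, so they must be carried together through the induction. Second, and more to the point, one has to notice that substituting the crude estimates $w_i\le 1$ and $1/v_i\le 1/v_0$ into the differentiated identity makes the weighted double sum collapse, via \eqref{eq:alternative_second_sum}, into precisely $(v_j-v_0)/v_0$; this is what converts the a priori unrelated hypothesis ${V_N}^D\le 2{V_0}^D$ into the clean bound $w_j\ge(2v_0-v_j)/v_0\ge 0$. A minor but necessary side point is the preliminary verification that each $v_j$ is $C^1$ and positive in ${V_0}^D$, which is an easy companion induction (or a citation of Lemma \ref{lemma:V_increasing}), but it should be in place before differentiating.
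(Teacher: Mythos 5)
Your proposal is correct and follows essentially the same route as the paper: differentiate the double-sum representation from Lemma \ref{lemma:alternative_Vn}, run a strong induction carrying both bounds together, get $w_j\le 1$ from $w_i\ge 0$, and get $w_j\ge 0$ by substituting $w_i\le 1$ and $1/v_i\le 1/{V_0}^D$ so that the sum collapses via \eqref{eq:alternative_second_sum} to $({V_j}^D-{V_0}^D)/{V_0}^D$ and the hypothesis ${V_N}^D\le 2{V_0}^D$ finishes. The only cosmetic difference is that you rewrite the double sum as a single weighted sum $\sum_{i=0}^{j-1}(j-i)\,rp_i/v_i^2\,w_i$, and you add the (sensible, paper-omitted) remark on differentiability.
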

In Lemma \ref{lemma:v_increasing}, we restricted ourselves to a network of nodes where the voltage ${V_N}^D$ can maximally be two times the voltage ${V_0}^D$. However, this is not an additional restriction since the inequality $0<\Delta\leq \frac{1}{2}$ immediately implies that ${V_N}^D\leq 2{V_0}^D$. 

Having Lemma \ref{lemma:v_increasing} at our disposal, we can prove the desired equivalence, which we state in Proposition \ref{prop:equivalence}.
\begin{proposition}\label{prop:equivalence}
Let the resistance $r$ and power consumptions $p_j, j=0,\ldots,N-1$ be given. Then the following equivalence, for $0< \Delta\leq \frac{1}{2}$,  for the voltages (under the Distflow model) at the root node and the node at the end of the line holds:
\begin{multline}\Big(\exists\,x \geq (1-\Delta)c~\text{s.t.}~{V_0}^D = x~\text{and}~v_N({V_0}^D) = c\Big)~\text{if and only if}\\~\Big({V_0}^D = (1-\Delta)c~\text{and}~v_N({V_0}^D) \leq c\Big),\label{eq:equivalence}
\end{multline} where $c>0$.
\end{proposition}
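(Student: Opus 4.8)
The plan is to prove the two implications of \eqref{eq:equivalence} separately, the only non-routine ingredient being the derivative bound of Lemma~\ref{lemma:v_increasing} combined with elementary continuity arguments. As a preliminary, observe that for every $y>0$ the recursion \eqref{eq:DF1}--\eqref{eq:DF2} is well defined and produces positive iterates: by a short induction using \eqref{eq:alternative_first_sum}, each $v_j(y)$ is positive (so no division by zero occurs), and by \eqref{eq:alternative_second_sum} together with $r,p_i\ge 0$ one has $v_j(y)\ge y$. Hence $v_N:(0,\infty)\to(0,\infty)$ is continuous (in fact differentiable, being built from the smooth maps in \eqref{eq:DF1}--\eqref{eq:DF2} with strictly positive denominators), and $v_N(y)\ge y$ for all $y>0$.

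For the ``if'' direction, suppose ${V_0}^D=(1-\Delta)c$ and $v_N((1-\Delta)c)\le c$. Evaluating \eqref{eq:alternative_second_sum} at argument $c$ gives $v_N(c)\ge c$. Since $v_N$ is continuous with $v_N((1-\Delta)c)\le c\le v_N(c)$ and $(1-\Delta)c\le c$, the intermediate value theorem yields some $x\in[(1-\Delta)c,c]$ with $v_N(x)=c$; this $x\ge(1-\Delta)c$ is precisely what the left-hand side of \eqref{eq:equivalence} asserts. (This direction does not use $\Delta\le\tfrac12$.)

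For the ``only if'' direction, suppose there is $x\ge(1-\Delta)c$ with $v_N(x)=c$; I must show $v_N((1-\Delta)c)\le c$. The heart of the matter is the claim that $v_N(y)\le 2y$ for every $y\in[(1-\Delta)c,x]$: granting it, Lemma~\ref{lemma:v_increasing} gives $0\le dv_N/d{V_0}^D\le 1$ at each such $y$, so $v_N$ is non-decreasing on $[(1-\Delta)c,x]$ and hence $v_N((1-\Delta)c)\le v_N(x)=c$. To prove the claim I would run a bootstrap/connectedness argument: at the right endpoint $v_N(x)=c\le 2(1-\Delta)c\le 2x$ since $\Delta\le\tfrac12$; let $\bar y$ be the infimum of all $y\in[(1-\Delta)c,x]$ such that $v_N(z)\le 2z$ holds on the whole of $[y,x]$. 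This set is nonempty and closed, so $\bar y$ belongs to it; if $\bar y>(1-\Delta)c$, then $v_N$ is non-decreasing on $[\bar y,x]$ by Lemma~\ref{lemma:v_increasing}, so $v_N(\bar y)\le c$, while $2\bar y>2(1-\Delta)c\ge c$ (again $\Delta\le\tfrac12$), giving $v_N(\bar y)<2\bar y$ strictly; by continuity $v_N(z)\le 2z$ then persists a little to the left of $\bar y$, contradicting minimality. Therefore $\bar y=(1-\Delta)c$, which is the claim.

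The step I expect to be the main obstacle is exactly this intermediate claim $v_N(y)\le 2y$ on the entire segment $[(1-\Delta)c,x]$: Lemma~\ref{lemma:v_increasing} only supplies the derivative bound pointwise, under the hypothesis ${V_N}^D\le 2{V_0}^D$, so one cannot conclude monotonicity of $v_N$ across the interval without first securing that hypothesis uniformly in $y$, and this is where the bootstrap and the restriction $0<\Delta\le\tfrac12$ are essential. Everything else (positivity of the iterates, continuity of $v_N$, and the two intermediate-value steps) is routine.
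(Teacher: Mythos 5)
Your proposal is correct, and it rests on the same two pillars as the paper's own proof: the intermediate value theorem for the implication from right to left (using continuity of $v_N$ and the inequality $v_N(c)\ge c$, which you make explicit via \eqref{eq:alternative_second_sum} and the paper leaves implicit), and the monotonicity supplied by Lemma~\ref{lemma:v_increasing} for the implication from left to right. The genuine difference lies in how that monotonicity is justified. The paper argues the left-to-right direction by contraposition and simply invokes Lemma~\ref{lemma:v_increasing} to assert that $v_N$ is increasing on $[(1-\Delta)c,\infty)$, leaning on the informal remark after that lemma that $0<\Delta\le\frac{1}{2}$ takes care of the hypothesis ${V_N}^D\le 2{V_0}^D$; it never checks that this hypothesis holds at every point of the interval over which monotonicity is used. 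You correctly single this out as the delicate step --- and it genuinely is one, since $v_N$ is not globally monotone (already $v_1(y)=y+rp_0/y$ is decreasing for $y<\sqrt{rp_0}$), so the hypothesis $v_N(y)\le 2y$ cannot simply be dropped --- and you close the gap with a clean bootstrap/connectedness argument anchored at the right endpoint, where $v_N(x)=c\le 2(1-\Delta)c\le 2x$ follows from $x\ge(1-\Delta)c$ and $\Delta\le\frac{1}{2}$. The net effect is that your argument is somewhat longer but more rigorous than the paper's on the left-to-right direction, while buying nothing extra on the other direction, which is essentially identical in both treatments.
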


Due to Proposition \ref{prop:equivalence}, we can, given the voltage $V_0^D$, recursively determine the voltage $V_N^D$. The value for the voltage ${V_0}^D$ is arbitrary, so for simplicity, we set ${V_0}^D = 1$ (and thus need to determine if ${V_N}^D\leq \frac{1}{1-\Delta}$). For the rest of the paper, we work with the convention that ${V_0}^D=1$, i.e.,
\begin{align}
{V_{1}}^D & = 1 + rp_0, \label{eq:DF1_1}\\
{V_{j+1}}^D & = 2{V_j}^D-{V_{j-1}}^D+\frac{rp_j}{{V_j}^D}, \quad j = 1,\ldots,N-1 \label{eq:DF2_1}.
\end{align}
Observe that Equations \eqref{eq:DF1_1}--\eqref{eq:DF2_1} are non-linear. For $\epsilon_{j-1,j}\in\mathcal{E}$, we apply the transformation,
\begin{align}
\mathbf{W}(\epsilon_{j-1,j}) = \begin{pmatrix}
{{V_{j-1}}}^2 & V_{j-1}V_j \\
V_jV_{j-1} & {{V_j}}^2
\end{pmatrix}:=
\begin{pmatrix}
W_{j-1,j-1} & W_{j-1,j} \\
W_{j,j-1} & W_{jj}
\end{pmatrix} \label{eq:transformation_W}
\end{align} to \eqref{eq:DF1_1} and \eqref{eq:DF2_1}. This transformation, for $j=1,\ldots,N-1$, leads to the linear Equations \eqref{eq:W2} and \eqref{eq:W_cond} (in terms of $\mathbf{W}(\epsilon_{ij})$), which turn out to be useful in proving Lemma \ref{lemma:compactness} in Section \ref{section:proof_distflow}; i.e.
\begin{align}
{W_{j,j+1}}^D & = 2{W_{j,j}}^D-{W_{j-1,j}}^D+rp_j,\label{eq:W2}
\end{align}
where
\begin{align}
{W_{0,0}}^D=1\ \text{and}\  {W_{0,1}}^D = 1+rp_0\label{eq:W_cond}
\end{align} are the initial values. However, if we want to compute the recursion variables ${W_{j,j+1}}^D, j=1,\ldots,N-1$ explicitly, we need the relationship defined in \eqref{eq:W1}, which is not linear:
\begin{align}
W_{j,j}^D & = \frac{{{W_{j-1,j}}^D}^2}{{W_{j-1,j-1}}^D}. \label{eq:W1}
\end{align} Again, we want to keep the maximal voltage drop under control. In this case, the constraint reduces to
\begin{align*}
W_{N,N}^D\leq \left(\frac{1}{1-\Delta} \right)^2,
\end{align*} where $0<\Delta\leq \frac{1}{2}$.
\subsubsection{Linearized Distflow}\label{subsec:lindist}
The second power flow model we consider is the \emph{Linearized Distflow} model. While the first power flow model eventually results in the non-linear Equation \eqref{eq:W1} when wishing to compute the voltage at each node, the Linearized Distflow model has a simpler representation. In the Linearized Distflow model, we assume that the active and reactive power losses $r|I_{j-1,j}|^2$ and $x|I_{j-1,j}|^2$, respectively, are much smaller than the active and reactive power flows $\tilde{P}_{j-1,j}$ and $\tilde{Q}_{j-1,j}$; i.e., the Linearized Distflow approximation neglects the loss terms associated with the squared current magnitudes $|I_{j-1,j}|^2$. Relabeling such that each further term of the sequence is defined as a function of the preceding terms and
rewriting Equation \eqref{eq:voltage_magnitude} using Equations \eqref{eq:active_power},\eqref{eq:reactive_power} and \eqref{eq:current_magnitude}, yields a linear relationship for the squared voltages at each node,
\begin{align}
({{V'_{j+1}}^L})^2 - ({{V'_{j}}^L})^2& =   2r\sum_{m=0}^{N-1-j} p_m,\quad j=1,\ldots,N-1. \label{eq:u_lindist2}
\end{align} where $({{V'_N}^L})^2$ is given. Notice that we do not need an equivalence as in the case of the voltages under the Distflow model since we have an explicit expression for ${V'_0}^L$. In this case, the voltage ${V'_0}^L$ can be computed directly; iteratively using \eqref{eq:u_lindist2}, 
we find a closed form solution for the squared voltage ${({{V'_{0}}^L})}^2$, i.e.,
\begin{align}
{({{V'_{0}}^L})}^2 = {({V'_N}^L)}^2-2r\sum_{j=0}^{N-1}\sum_{m=0}^{N-1-j} p_m. \label{eq:constraints}
\end{align}
Thus, we may rewrite \eqref{eq:constraints} by defining ${W_{j,j}}^L:=({{V'_j}^L})^2$, for $j=0,\ldots,N$. Then, Equation \eqref{eq:constraints} has the linear expression,
\begin{align}
{W_{0,0}}^L = {W_{N,N}}^L - 2r\sum_{j=0}^{N-1}\sum_{m=0}^{N-1-j} p_m.\label{eq:W_lindist}
\end{align} 

\subsubsection{Summary}\label{subsubsec:power_flow_summary} Again, note that $V_j$ and $W_{j,j}, j=1,\ldots,N$ are dependent on the vector $\mathbf{p}$ and resistance $r$. We write $V_j(\mathbf{p},z)$ and $W_{j,j}(\mathbf{p},z)$ as a continuous function of the power allocation $\mathbf{p}$ and distribution network parameter $r$, respectively, when we wish to emphasize this. Finally, we can define our constraint set $\mathcal{C}$. Recall that the only constraint we put on the power allocation is the voltage drop constraint:
\begin{align*}
{W_{N,N}}-W_{0,0}\leq \left(\frac{1}{1-\Delta} \right)^2 -1,
\end{align*} where $0<\Delta\leq \frac{1}{2}$, on both power flow models. Thus, we define the constraint set as follows,
\begin{align}
\mathcal{C} & := \left\{\mathbf{p}: {W_{N,N}}-{W_{0,0}}\leq \left(\frac{1}{1-\Delta} \right)^2-1,\quad 0<\Delta\leq\frac{1}{2}\right\}.\label{eq:set_of_constraints}
\end{align}

In summary, the main features of the two power flow models we consider are given in the table below.

\begin{table}[h!]
    \centering
    \begin{tabular}{llllll}
        \toprule
        \rowcolor{gray!10} &&& \multicolumn{2}{c}{Load flow models} \\
        \cmidrule(lr){4-5}
        && & Linearized Distflow & Distflow \\
        \midrule Given $r$ and $\mathbf{p}$: \\ \\ \midrule
        \rowcolor{gray!10} Original equation & &  &        &  \\ \midrule
        \emph{Initial conditions} & & & $({V_N^L})^2 = \left(\frac{1}{1-\Delta}\right)^2$       & $V_0^D = 1$         \\	
        & & & & $V_1^D = 1+rp_0$ \\	
        \emph{Recursion} & & &  $({V_j^L})^2 = ({V_{j-1}^L})^2+2r\sum_{m=0}^{N-1-j} p_m$      & $V_{j+1}^D = 2V_j^D-V_{j-1}^D+\frac{rp_j}{V_j^D}$  \\ \midrule
        \rowcolor{gray!10} After transformation &&  &  &         \\ \midrule
        \emph{Initial conditions} & & & ${W_{N,N}^L} = \left(\frac{1}{1-\Delta}\right)^2$ & $W_{0,0}^D = 1$ \\
        & & & & $W_{0,1}^D = 1+rp_0$\\
        \emph{Recursion} & & & ${W_{0,0}^L} = W_{N,N}^L - 2r\sum_{j=0}^{N-1}\sum_{m=0}^{N-1-j} p_m$ &        $W_{j,j}^D = \frac{{W_{j-1,j}^D}^2}{W_{j-1,j-1}^D}$ \\
        && &  &   \\
        && &  & $W_{j,j+1}^D = 2W_{j,j}^D-W_{j-1,j}^D+rp_j$\\ \midrule \emph{Voltage drop constraint} & &  & $W_{0,0}^L \geq 1$& $W_{N,N}^D \leq \left(\frac{1}{1-\Delta} \right)^2$ \\
        \bottomrule
    \end{tabular}
    \caption{Summary of the recursions for the Linearized Distflow and the Distflow models.}
    \label{tab:load_flow_models}
\end{table}

\section{Main results}\label{sec:main_results}
In this section, we present our main results which concern the comparison of the stability of the queuing model under distribution network constraints. The stability of the system is defined as positive recurrence of the Markov process $\mathbf{X}$.

For convenience, we recall the main features of the queuing model. There are $N$ charging stations where customers can charge their EVs. These charging stations are represented by nodes on a line and each line is characterized by the impedance $r$. Each charging station has its own arrival stream of vehicles with homogeneous rate $\lambda$ and charging rate $p_i$, for $i=1,\ldots,N$. The way the powers $p_i$ are allocated among charging stations is given by the mechanism as described in Section \ref{subsec:queueing} and are constrained by the constraint set $\mathcal{C}$ as defined in  \eqref{eq:set_of_constraints}.

Within this framework, we compute, in Theorems \ref{THM:LINDIST} and \ref{THM:DISTFLOW}, the maximal feasible arrival rates when the number of charging stations, denoted by $N$, goes to infinity, such that the queuing model is stable given distribution network constraints. In other words, we compute the arrival rates such that the maximal voltage drop is attained. Then, in Section \ref{SUBSEC:COMPARISON}, we compare these arrival rates explicitly.

Our work is based on \cite{Aveklouris2019} and \cite{Shneer2018}, and both provide a way to prove stability. So before we continue with the main results on stability in Sections \ref{subsec:lindist} and \ref{subsec:distflow}, we discuss these two ways to prove stability. First, we start with a brief discussion of the approach in \cite{Aveklouris2019} -- the method of convex relaxation. Then, we  study an extension of \cite[Theorem 11]{Shneer2018} that we use to prove Theorems \ref{THM:LINDIST} and \ref{THM:DISTFLOW} and provides stability of the Markov process $\mathbf{X}$ as well.

The approach of convex relaxation in \cite{Aveklouris2019} goes as follows. It is known that the utility optimization problem in \eqref{eq:argmax_p} subject to the constraint set defined in \eqref{eq:set_of_constraints} is in general non-convex. However, since our underlying network topology is a line, we obtain that the convex-relaxation of \eqref{eq:argmax_p} under the constraint defined in \eqref{eq:set_of_constraints} is exact \cite{Aveklouris2019}. Then, for all $\alpha$-fair allocations, the Markov process $\mathbf{X}$ is stable \cite{Bonald2006a}.

The approach in \cite{Shneer2018} is more general in the sense that we do not need $\alpha$-fair algorithms to prove stability. It holds for a larger class of functions. If the constraint set $\mathcal{C}$ is compact and coordinate-convex, then the Markov process $\mathbf{X}$, which represents the number of EVs charging at every station, is stable if there exists a vector $\boldsymbol{\nu}\in\mathcal{C}$ such that $\boldsymbol{\lambda}<\boldsymbol{\nu}$ according to \cite{Shneer2018}. In fact, in Appendix \ref{sec:stability_result}, we show that we do not need coordinate-convexity, but only compactness of the constraint set $\mathcal{C}$ to prove stability.

\subsection{Stability conditions under the Linearized Distflow model}\label{subsec:lindist}

In this section, we show the computation of the specific arrival rate under the Linearized Distflow model such that the process is stable. That is, we show there exists an explicit threshold $\lambda_N^L$ that only depends on the distribution network parameter $r$, the maximal voltage drop parameter $\Delta$ and the number of charging stations $N$ in the network such that for all arrival rates below this threshold, the queuing model is stable.



\begin{theorem}
Consider the queuing model in Section \ref{subsec:queueing}. The Markov process $\mathbf{X}$ is positive recurrent, if
\begin{align}
\lambda < \lambda_N^L := \frac{\frac{1}{r}\left(\left(\frac{1}{1-\Delta}\right)^2-1 \right)}{N(N+1)}.\label{eq:lambda_N^L}
\end{align}
\label{THM:LINDIST}
\end{theorem}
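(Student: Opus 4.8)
The plan is to invoke the stability criterion proved in Appendix \ref{sec:stability_result} --- the extension of \cite[Theorem 11]{Shneer2018} with the coordinate-convexity hypothesis removed --- which guarantees that $\mathbf{X}$ is positive recurrent as soon as the constraint set $\mathcal{C}$ is compact and there exists $\boldsymbol{\nu}\in\mathcal{C}$ with $\boldsymbol{\lambda}<\boldsymbol{\nu}$. Since here $\boldsymbol{\lambda}=(\lambda,\ldots,\lambda)$, it is enough to produce a scalar $\nu>\lambda$ for which the constant allocation $\boldsymbol{\nu}=(\nu,\ldots,\nu)$ is feasible under the Linearized Distflow model.

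The first step is to make $\mathcal{C}$ explicit. Feeding $W_{N,N}^L=\bigl(\tfrac{1}{1-\Delta}\bigr)^2$ into the recursion \eqref{eq:W_lindist} and imposing the voltage-drop constraint $W_{0,0}^L\geq 1$, the feasible region $\mathcal{C}$ under the Linearized Distflow model is the set of non-negative vectors $\mathbf{p}=(p_0,\ldots,p_{N-1})$ with
\begin{align*}
2r\sum_{j=0}^{N-1}\sum_{m=0}^{N-1-j} p_m \leq \Bigl(\frac{1}{1-\Delta}\Bigr)^2-1 .
\end{align*}
Swapping the order of summation, each $p_k$ is counted once for every $j\in\{0,\ldots,N-1-k\}$, i.e. exactly $N-k$ times, so the double sum equals $\sum_{k=0}^{N-1}(N-k)p_k$ and the constraint reads $\sum_{k=0}^{N-1}(N-k)p_k\leq \tfrac{1}{2r}\bigl((\tfrac{1}{1-\Delta})^2-1\bigr)$. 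This set is closed, being the intersection of a half-space with the non-negative orthant, and bounded, since all coefficients $N-k$ are strictly positive; hence it is compact and the standing hypothesis of the stability criterion holds.

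The second step is the choice of $\nu$. For $p_k=\nu$ for all $k$ the left-hand side becomes $\nu\sum_{k=0}^{N-1}(N-k)=\nu\,\tfrac{N(N+1)}{2}$, so $\boldsymbol{\nu}\in\mathcal{C}$ precisely when $r\,\nu\,N(N+1)\leq (\tfrac{1}{1-\Delta})^2-1$, that is, when $\nu\leq\lambda_N^L$. By hypothesis $\lambda<\lambda_N^L$, so picking any $\nu\in(\lambda,\lambda_N^L]$ gives $\boldsymbol{\nu}\in\mathcal{C}$ and $\boldsymbol{\lambda}<\boldsymbol{\nu}$; the stability criterion then yields positive recurrence of $\mathbf{X}$.

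I do not expect a genuine obstacle: everything reduces to unfolding the double sum and verifying compactness of a polyhedral set. The only delicate point is making sure the appendix version of \cite[Theorem 11]{Shneer2018} really covers the utility-maximizing allocation \eqref{eq:argmax_p} and the constraint set \eqref{eq:set_of_constraints} without the coordinate-convexity assumption, which is exactly the content of Appendix \ref{sec:stability_result} (and in any case the $\mathcal{C}$ described above happens to be coordinate-convex as well).
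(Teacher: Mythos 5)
Your proposal is correct and follows essentially the same route as the paper: reduce to the (coordinate-convexity-free) version of Theorem 11 of \cite{Shneer2018}, verify compactness of $\mathcal{C}$ from the explicit linear expression for $W_{0,0}^L$, and check that the constant allocation at rate $\lambda_N^L$ saturates the voltage-drop constraint via $\sum_{j=0}^{N-1}\sum_{m=0}^{N-1-j}1=\tfrac{1}{2}N(N+1)$. The only (harmless) cosmetic differences are that you unfold the double sum into $\sum_k (N-k)p_k$ and argue closedness via the polyhedral structure rather than continuity of $W_{0,0}^L$.
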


The proof is given in Section \ref{section:proof_lindist}. It exploits the explicit expression of the squared voltage ${W_{0,0}^L}^2$ and uses \cite[Theorem 11]{Shneer2018}. We show for the arrival rate $\lambda_N^L$ that the maximal allowed voltage drop is attained, so that $\boldsymbol{\lambda}_N^L$ is still in the constraint set $\mathcal{C}$.
\begin{remark}\label{remark:lambda_D}
Under the Linearized Distflow approximation, defined in \eqref{eq:W_lindist}, 
the maximum feasible arrival rate is decaying as the inverse of the square of the number of nodes.
\end{remark}

We now proceed with the stability result under the Distflow model.

\subsection{Stability conditions under the Distflow model}\label{subsec:distflow}
In this section, we show there exists a specific arrival rate $\lambda_N^D$ that only depend on the resistance $r$, the maximal voltage drop parameter $\Delta$ and the number of charging stations $N$ in the network such that for every arrival rate below this threshold the system is stable. Furthermore, as the number of charging stations $N$ goes to infinity, we show the convergence of a suitably scaled version of $\lambda_N^D$ to a \emph{critical} arrival rate $\lambda_c^D$.

\begin{theorem}
Consider the queuing model in Section \ref{subsec:queueing}. There exists $\lambda_N^D$ such that the Markov process $\mathbf{X}$ is positive recurrent, if
\begin{align*}
\lambda < \lambda_N^D.
\end{align*} Moreover, $N^2\lambda_N^D \to \lambda_c^D$ as $N\to\infty$ with
\begin{align}
\lambda_c^D = \frac{\pi}{2r}\text{erfi}^2\left(\sqrt{\ln\left(\frac{1}{1-\Delta}\right)} \right).\label{eq:critical_lambda}
\end{align} 
\label{THM:DISTFLOW}
\end{theorem}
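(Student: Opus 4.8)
# Proof Proposal for Theorem \ref{THM:DISTFLOW}

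The plan is to follow the same two-part structure suggested by the statement: first establish the existence of a feasible threshold $\lambda_N^D$ via \cite[Theorem 11]{Shneer2018}, then carry out an asymptotic analysis of $N^2\lambda_N^D$. For the first part, I would fix the homogeneous arrival rate $\lambda$ and the corresponding candidate allocation in which every node receives the same power $p$; feeding $p_j \equiv p$ into the Distflow recursion \eqref{eq:DF1_1}–\eqref{eq:DF2_1} (equivalently \eqref{eq:W2}–\eqref{eq:W1}) produces a voltage ${V_N}^D = {V_N}^D(p,r,N)$ that is, by Lemma \ref{lemma:V_increasing}, increasing in $p$. Define $\lambda_N^D$ to be the value of $p$ at which the voltage-drop constraint $W_{N,N}^D \le (1-\Delta)^{-2}$, i.e. ${V_N}^D \le (1-\Delta)^{-1}$, holds with equality. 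Then for $\lambda < \lambda_N^D$ the constant vector $\boldsymbol{\nu} = (\lambda_N^D,\dots,\lambda_N^D)$ lies in the interior of the compact constraint set $\mathcal{C}$ and dominates $\boldsymbol{\lambda}$ coordinate-wise; invoking the extension of \cite[Theorem 11]{Shneer2018} proved in Appendix \ref{sec:stability_result} (which needs only compactness of $\mathcal{C}$) gives positive recurrence of $\mathbf{X}$. Compactness of $\mathcal{C}$ itself is exactly what Lemma \ref{lemma:compactness} in Section \ref{section:proof_distflow} provides, using the linear form \eqref{eq:W2}.

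For the asymptotic part, the natural device is the scaling introduced in Section \ref{subsec:approximation_continuous counterpart}: set $p_j = \lambda/N^2$ for a fixed $\lambda > 0$ and study the rescaled voltage process $V^{(N)}(t) := {V}^D_{\lfloor tN\rfloor}$ on $t\in[0,1]$. Substituting into \eqref{eq:DF2_1} shows that $V^{(N)}$ satisfies a discrete second-order difference equation whose continuum limit (as established in Appendix \ref{sec:conv_VN_to_V(1)}) is the integral equation governing a function $V(\cdot) \in \mathbf{D}_{\ge 1}[0,1]$; informally $V''(t) = r\lambda/V(t)$ with $V(0)=1$, $V'(0^+)=0$ after accounting for the boundary term in \eqref{eq:DF1_1}. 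By the convergence result of Appendix \ref{sec:conv_VN_to_V(1)} we get ${V}^D_N \to V(1)$, where $V(1) = V(1;\lambda,r)$ is continuous and strictly increasing in $\lambda$. The threshold condition ${V}^D_N = (1-\Delta)^{-1}$ then translates, in the limit, to $V(1;\lambda_c^D,r) = (1-\Delta)^{-1}$, and a monotonicity/continuity argument upgrades this to $N^2\lambda_N^D \to \lambda_c^D$. Finally, the explicit solution of the integral equation supplied in Appendix \ref{sec:integral_equation} — an energy/first-integral computation yielding $V$ in terms of the imaginary error function — lets me solve $V(1;\lambda_c^D,r) = (1-\Delta)^{-1}$ in closed form and obtain $\lambda_c^D = \frac{\pi}{2r}\,\mathrm{erfi}^2\!\big(\sqrt{\ln(1/(1-\Delta))}\big)$.

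Two points need care. First, one must justify uniform bounds keeping $V^{(N)}$ away from $0$ (so the nonlinearity $1/V$ stays Lipschitz) and bounded above; this is where the hypothesis $\Delta \le \tfrac12$, hence ${V}^D_N \le 2{V_0}^D$, is used, mirroring its role in Lemma \ref{lemma:v_increasing}. Second, one must check that taking the limit commutes with the constraint being tight — i.e. that the $\lambda$ achieving equality for finite $N$ converges to the $\lambda$ achieving equality in the limit rather than to something larger or smaller; strict monotonicity of $\lambda \mapsto V(1;\lambda,r)$ together with uniform convergence on compact $\lambda$-intervals handles this.

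The main obstacle I expect is the convergence ${V}^D_N \to V(1)$ of the discrete recursion to the integral-equation solution, carried out in Appendix \ref{sec:conv_VN_to_V(1)}: controlling the accumulated discretization error of a second-order nonlinear recursion over $\Theta(N)$ steps, uniformly in $t$, while simultaneously maintaining the a priori lower bound $V^{(N)} \ge 1$ that makes the right-hand side well behaved. Everything downstream — identifying $\lambda_c^D$, the closed form via $\mathrm{erfi}$ — is then a matter of solving an autonomous ODE by quadrature and inverting an elementary relation.
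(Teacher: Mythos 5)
Your proposal follows essentially the same route as the paper: embedding the model in the framework of \cite{Shneer2018} with compactness of $\mathcal{C}$ (Lemma \ref{lemma:compactness}), passing to the scaled continuum limit $V''(t)=a/V(t)$, $V(0)=1$, $V'(0)=0$ of the Distflow recursion via Proposition \ref{THM:CONVERGENCE}, and solving by quadrature to obtain the $\mathrm{erfi}$ expression. Two minor notes: the monotonicity of $V_N^D$ in the power level is not given by Lemma \ref{lemma:V_increasing} (which concerns monotonicity in the node index $j$) but rather by the positivity of the derivative $Y_N^D$ in Lemma \ref{lemma:Wn_positive}, and the paper settles your ``limit commutes with the tight constraint'' concern by invoking \cite[Lemma 3.1]{Marti1975} rather than a direct uniform-convergence-plus-strict-monotonicity argument.
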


The imaginary error function $\text{erfi}(z)$ is the function defined by
\begin{align*}
\text{erfi}(z) = \frac{2}{\sqrt{\pi}}\int_0^z \exp(v^2)dv = \frac{1}{\mathrm{i}}\text{erf}(\mathrm{i}z),
\end{align*} where $\text{erf}(x) = \frac{2}{\sqrt{\pi}}\int_0^x \exp(-t^2)dt$ is the standard error function.

The proof is given Section \ref{section:proof_distflow}. Unlike the proof of Theorem \ref{THM:LINDIST}, we do not have an explicit expression for the voltage $V_N^D$. Thus, we require a different approach to find the maximal feasible arrival rate $\lambda_N^D$. Instead of solving for the value $\lambda_N^D$ such that the maximal allowed voltage drop is attained, as in \eqref{eq:set_of_constraints}, we approximate the voltage $V_N^D$ by a continuous function. Then, we show that this continuous function converges to the voltage $V_N^D$ as $N$ goes to infinity and compute the arrival rate such that the maximal voltage drop is attained under this new approximation.

\subsection{Comparison of stability regions}\label{SUBSEC:COMPARISON}
In this section, we compare the critical arrival rates under both power flow models. The expression for the critical arrival rate under the Distflow model is given in \eqref{eq:critical_lambda}. For the Linearized Distflow model, we see from \eqref{eq:lambda_N^L} that $N^2\lambda_N^L\to\lambda_c^L$ as $N\to\infty$ with
\begin{align}
\lambda_c^L = \frac{1}{r}\left(\left(\frac{1}{1-\Delta} \right)^2-1 \right).\label{eq:critical_lambda_L}
\end{align}
Then, we have for the ratio between $\lambda_c^D$ and $\lambda_c^L$,
\begin{align}
\frac{\lambda_c^D}{\lambda_c^L} & = \frac{2(1-\Delta)^2\left(\int_0^{\sqrt{\ln\left(\frac{1}{1-\Delta}\right)}}\exp(u^2)du \right)^2}{\Delta(2-\Delta)} \nonumber\\
& =: P(\Delta).\label{eq:P(delta)}
\end{align} 
We study the behavior of the function $P(\Delta)$ in Theorem \ref{THM:DECREASINGNESS_P}.
\begin{theorem}\label{THM:DECREASINGNESS_P} Let $P(\Delta)$ be defined as in \eqref{eq:P(delta)} for $0<\Delta\leq\frac{1}{2}$. Then $P(\Delta)$ is a strictly decreasing function from 1 at $\Delta=0$ to $\frac{\pi}{6}\text{erfi}\left(\sqrt{\ln(2)} \right)^2\approx 0.77$ at $\Delta =\frac{1}{2}$.
\end{theorem}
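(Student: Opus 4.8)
The plan is to analyze the function $P(\Delta)$ directly via a change of variables that removes the awkward coupling between the integrand, its limit, and the prefactor. First I would substitute $t = \sqrt{\ln\left(\tfrac{1}{1-\Delta}\right)}$, equivalently $\Delta = 1 - e^{-t^2}$, so that $t$ ranges over $[0, \sqrt{\ln 2}]$ as $\Delta$ ranges over $[0, \tfrac12]$, and $t \mapsto \Delta$ is a smooth increasing bijection. Under this substitution $1-\Delta = e^{-t^2}$, $(1-\Delta)^2 = e^{-2t^2}$, and $\Delta(2-\Delta) = 1-(1-\Delta)^2 = 1 - e^{-2t^2}$, so that
\begin{align*}
P = \frac{2 e^{-2t^2}\left(\int_0^{t} e^{u^2}\,du\right)^2}{1 - e^{-2t^2}} = \frac{2\left(\int_0^{t} e^{u^2}\,du\right)^2}{e^{2t^2} - 1}.
\end{align*}
Since the map $t\mapsto\Delta$ is increasing, it suffices to show that the right-hand side, call it $Q(t)$, is strictly decreasing on $(0,\sqrt{\ln 2}]$. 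I would also record the two boundary values: as $t\to 0^+$, $\int_0^t e^{u^2}du \sim t$ and $e^{2t^2}-1\sim 2t^2$, so $Q(t)\to 1$; and at $t=\sqrt{\ln 2}$, $e^{2t^2}-1 = 3$ and $\int_0^{\sqrt{\ln 2}} e^{u^2}du = \tfrac{\sqrt\pi}{2}\mathrm{erfi}(\sqrt{\ln 2})$, giving $Q = \tfrac{2}{3}\cdot\tfrac{\pi}{4}\mathrm{erfi}(\sqrt{\ln 2})^2 = \tfrac{\pi}{6}\mathrm{erfi}(\sqrt{\ln 2})^2$, matching the claimed endpoints.

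For monotonicity, write $F(t) := \int_0^t e^{u^2}\,du$ and $G(t) := e^{2t^2}-1$, so $Q = 2F^2/G$. Then $Q' = 2(2FF'G - F^2 G')/G^2$, and $Q'<0$ is equivalent to $2F'(t)G(t) < F(t)G'(t)$, i.e.
\begin{align*}
2 e^{t^2}\left(e^{2t^2}-1\right) < F(t)\cdot 4t\, e^{2t^2},
\end{align*}
which after dividing by $2e^{t^2}$ rearranges to the inequality
\begin{align*}
H(t) := 2t\, e^{t^2}\, F(t) - e^{2t^2} + 1 > 0 \qquad \text{for } t\in(0,\sqrt{\ln 2}].
\end{align*}
I would prove $H(t)>0$ by showing $H(0)=0$ and $H'(t)>0$ on the relevant interval (in fact on all of $(0,\infty)$). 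Differentiating, $H'(t) = 2e^{t^2}F(t) + 2t\cdot 2t e^{t^2}F(t) + 2t e^{t^2}\cdot e^{t^2} - 4t e^{2t^2} = 2e^{t^2}F(t)(1+2t^2) + 2t e^{2t^2} - 4t e^{2t^2} = 2e^{t^2}\big((1+2t^2)F(t) - t e^{t^2}\big)$. So it remains to show $(1+2t^2)F(t) > t e^{t^2}$; calling this $R(t) := (1+2t^2)F(t) - t e^{t^2}$, we have $R(0)=0$ and $R'(t) = 4t F(t) + (1+2t^2)e^{t^2} - e^{t^2} - 2t^2 e^{t^2} = 4t F(t) > 0$ for $t>0$. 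Hence $R>0$ on $(0,\infty)$, so $H'>0$, so $H>0$, so $Q'<0$; this gives strict decreasingness of $Q$, hence of $P$ in $\Delta$.

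The only genuinely delicate point is the repeated ``evaluate at $0$, then differentiate'' cascade: one must check that each auxiliary function indeed vanishes at $t=0$ and that the derivative computations collapse as cleanly as above — in particular the cancellation producing $R'(t) = 4tF(t)$ is what makes the whole argument go through, and I would double-check it. Everything else (the substitution, the boundary-value computations, the expression of $\mathrm{erfi}$ in terms of $F$) is routine. Note also that this approach never needs the $\mathrm{erfi}$ notation internally — it works entirely with the elementary function $F(t)=\int_0^t e^{u^2}du$ — and only converts back to $\mathrm{erfi}$ at the very end to state the endpoint value, which keeps the estimates transparent.
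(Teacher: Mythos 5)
Your proposal is correct and follows essentially the same route as the paper: the same substitution $t=\sqrt{\ln(1/(1-\Delta))}$, the same reduction of $P'<0$ to the inequality $2te^{t^2}\int_0^t e^{u^2}du > e^{2t^2}-1$ (which the paper writes as $\int_0^y e^{u^2}du > \sinh(y^2)/y$), and the same ``both sides vanish at $0$, compare derivatives'' strategy. The only cosmetic difference is in the last step, where the paper finishes after one differentiation using $e^s-1-s>0$ while you iterate the differentiation argument once more; both are valid.
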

The proof of Theorem \ref{THM:DECREASINGNESS_P} can be found in Appendix \ref{sec:appendix_comparison}. The importance of Theorem \ref{THM:DECREASINGNESS_P} lies in the fact that it implies that the critical arrival rate under the Distflow is always smaller than the critical arrival rate under the Linearized Distflow model and that we are able to quantify the difference between the critical arrival rates.
For several values of $\Delta$, Table \ref{tab:kappa_bounds} shows 
the ratio between $\lambda_c^D$ and $\lambda_c^L$.

\begin{table}[h!]
\centering
 \begin{tabular}{c c }
 \hline
 \rowcolor{gray!10} $\Delta$ & $\lambda_c^D/\lambda_c^L$   \\ [1ex]
 \hline
 0.01 & 0.9966    \\
 0.05 & 0.9828   \\
 0.1 & 0.9647   \\
 0.2 & 0.9248  \\ [1ex]
 \hline
 \end{tabular}
 \caption{The fraction $\lambda_c^D/\lambda_c^L$ for specific values of $\Delta$.}
 \label{tab:kappa_bounds}
\end{table}
\begin{figure}[h!]
\centering
\includegraphics[scale=0.5]{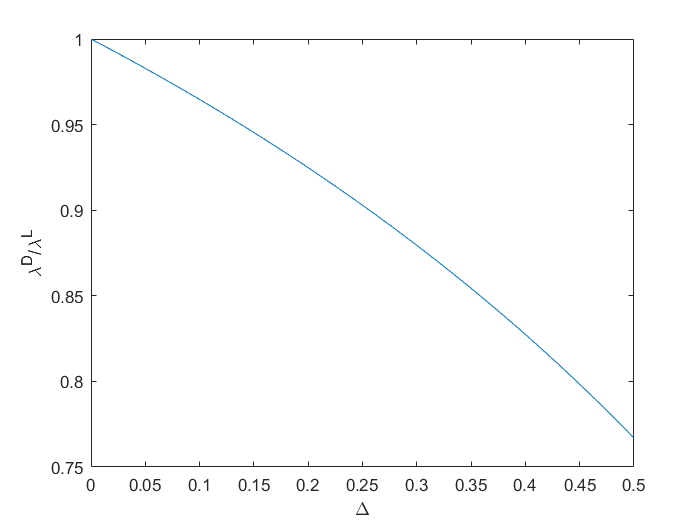}
\caption{The fraction $\frac{\lambda_c^D}{\lambda_c^L}$ via \eqref{eq:P(delta)} for $0<\Delta\leq \frac{1}{2}$.}
\label{fig:kappa_bounds}
\end{figure}
Similar to Table \ref{tab:kappa_bounds}, one can see from Figure \ref{fig:kappa_bounds} that the ratio between $\lambda_c^D$ and $\lambda_c^L$ decreases as $\Delta$ increases. 
When we allow a voltage drop of $50\%$, i.e., $\Delta = \frac{1}{2}$, the maximal feasible arrival rate under the Linearized Distflow model is $\approx 20\%$ higher than under the Distflow model. In more realistic situations, when $\Delta$ is close to 0, the difference between these critical arrival rates is even smaller.  

\section{Proof of Theorem \ref{THM:LINDIST}}\label{section:proof_lindist}

The proof consists of three steps. First, we show that our queuing model description, as in Section \ref{subsec:queueing}, is contained in the general model framework that is given by \cite{Shneer2019c}. Then, in order to apply their stability result, which we discuss below, we show that the constraint set $\mathcal{C}$ is compact. Last, we show that the maximal feasible arrival rate $\lambda_N^L$ is given by Equation \eqref{eq:lambda_N^L}.

\subsection{Queuing model framework}\label{subsec:correct_model}
We consider the general model framework in \cite{Shneer2018} and compare it with the continuous-time model of Section \ref{sec:model_description}. In their setting, there is an arbitrary network of queues such that individual instantaneous service rates may depend on the state of the entire system. That corresponds to our network of charging stations such that the power allocation depends on the total number of cars charging at the same time. The network state is represented by a set $\mathbf{X}(t)=(X_i(t),i\in N)$ of the queue lengths $X_i$ at the network nodes $i\in N$ at time $t$. In our case, the network state is represented by the number of cars charging at each charging station. Arrivals into queue $i$ occur according to a Poisson process with a constant rate $\lambda$, 
independent of all other processes. The instantaneous service rate $\mu_i$ of a node represents the intensity of the Poisson process modeling departures (service completions) of the node. In their case, the service rate allocation algorithm $\boldsymbol{\psi}(\mathbf{X}(t))$, mapping a network state $\mathbf{X}(t)$ into a set of instantaneous service rates $\boldsymbol{\mu}$, is all that is needed to specify the service allocation algorithm. In our case, the power allocation is given by a constrained optimization problem, mapping the network state, the number of EVs at each charging station, into a power allocation.

If we are in this general model framework and there exists $\boldsymbol{\nu}\in\mathcal{C}$ such that $\boldsymbol{\lambda}<\boldsymbol{\nu}$ and the set $\mathcal{C}$ is compact and coordinate-convex (i.e. if a vector $\mu$ belongs to $\mathcal{C}$ and $\mu'\leq \mu$ coordinate-wise, then $\mu'\in\mathcal{C}$), then the Markov process $\mathbf{X}$ is positive recurrent according to \cite[Theorem~11]{Shneer2018}. In Appendix \ref{sec:stability_result}, we show that the set $\mathcal{C}$ does not need to be necessarily coordinate-convex, since we can construct a coordinate-convex set $\mathcal{C}_1$ such that the following are equivalent:
\begin{align*}
\text{there exists}\ \boldsymbol{\nu}\in\mathcal{C}\ \text{such that}\ \boldsymbol{\lambda}<\boldsymbol{\nu},
\end{align*} and
\begin{align*}
\text{there exists}\ \boldsymbol{\nu}\in\mathcal{C}_1\ \text{such that}\ \boldsymbol{\lambda}<\boldsymbol{\nu}.
\end{align*} We are thus left to show that there exists $\boldsymbol{\nu}\in\mathcal{C}$ such that $\boldsymbol{\lambda}<\boldsymbol{\nu}$ and that $\mathcal{C}$ is a compact set. First, we show the compactness of the set $\mathcal{C}$.

\subsection{Compactness of the constraint set $\mathcal{C}$}

In this section, we show that the constraint set $\mathcal{C}$ is compact, i.e. we show that the set is closed and bounded.
\begin{lemma}\label{lemma:compactness_lindist}
The constraint set $\mathcal{C}$, defined in \eqref{eq:set_of_constraints}, is compact for the Linearized Distflow model.
\end{lemma}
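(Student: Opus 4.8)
The plan is to show that $\mathcal{C}$ is closed and bounded as a subset of $\mathbb{R}^N$, using the explicit closed-form expression for ${W_{0,0}}^L$ derived in \eqref{eq:W_lindist}. Recall from \eqref{eq:set_of_constraints} that $\mathbf{p}\in\mathcal{C}$ iff ${W_{N,N}}-{W_{0,0}}\leq\left(\frac{1}{1-\Delta}\right)^2-1$, and that under the Linearized Distflow model we have the exact identity
\begin{align*}
{W_{N,N}}^L - {W_{0,0}}^L = 2r\sum_{j=0}^{N-1}\sum_{m=0}^{N-1-j} p_m = 2r\sum_{m=0}^{N-1}(m+1)p_{N-1-m},
\end{align*}
so the defining constraint becomes a \emph{single affine inequality} in the coordinates $p_0,\dots,p_{N-1}$ (together with the implicit nonnegativity $p_j\geq 0$ coming from Assumption 2.4, i.e. the active powers are nonnegative). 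Thus $\mathcal{C}=\{\mathbf{p}\in\mathbb{R}_{\geq 0}^N : \langle \mathbf{a},\mathbf{p}\rangle \leq b\}$ for the fixed vector $\mathbf{a}$ with strictly positive entries $a_m = 2r(N-m)$ (in the original indexing) and the constant $b=\left(\frac{1}{1-\Delta}\right)^2-1>0$.

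First I would argue \textbf{closedness}: the map $\mathbf{p}\mapsto {W_{N,N}}^L-{W_{0,0}}^L$ is a (continuous, indeed linear) function of $\mathbf{p}$ by \eqref{eq:W_lindist}, hence $\mathcal{C}$ is the preimage of the closed half-line $(-\infty, b]$ under a continuous map, intersected with the closed orthant $\mathbb{R}_{\geq 0}^N$; an intersection of two closed sets is closed. Next, \textbf{boundedness}: since every coefficient $a_m = 2r(N-m)$ is strictly positive (as $r>0$ and $m\leq N-1$) and every $p_j\geq 0$, for each fixed coordinate $p_k$ we have $a_k p_k \leq \sum_m a_m p_m \leq b$, hence $0\leq p_k \leq b/a_k = b/(2r(N-k))$. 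This bounds every coordinate, so $\mathcal{C}$ is contained in a box $\prod_{k}[0, b/(2r(N-k))]$ and is therefore bounded. A closed and bounded subset of $\mathbb{R}^N$ is compact by Heine--Borel, which finishes the proof.

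I do not expect any real obstacle here: the whole point of the Linearized Distflow model (as emphasised in Section 2.3.2) is that ${W_{0,0}}^L$ is available in the explicit linear form \eqref{eq:W_lindist}, which collapses the constraint to one affine inequality on a nonnegative orthant. The only point requiring a word of care is making the nonnegativity of the $p_j$ explicit — it is not part of \eqref{eq:set_of_constraints} literally but follows from the model assumption that active powers are nonnegative (Assumption 2.4) — since without it the set would be an unbounded slab. Once that is noted, compactness is immediate from Heine--Borel; the genuinely harder analogue is the Distflow case (Lemma in Section \ref{section:proof_distflow}), where the nonlinear relation \eqref{eq:W1} replaces the clean identity \eqref{eq:W_lindist}.
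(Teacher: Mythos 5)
Your proposal is correct and follows essentially the same route as the paper: closedness from continuity (linearity) of $\mathbf{p}\mapsto W_{0,0}^L$ via \eqref{eq:W_lindist}, and boundedness from the resulting bound on the weighted sum of the $p_m$, followed by Heine--Borel. The only difference is presentational: you make explicit the nonnegativity of the $p_j$ and the per-coordinate bound $p_k\leq b/(2r(N-k))$, which the paper's proof uses implicitly when passing from the bound on $\sum_{j}\sum_{m}p_m$ to boundedness of $\mathbf{p}$.
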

\begin{proof}
Since $W_{0,0}^L$ is a continuous function, the constraint set $\mathcal{C}$ is closed.
Furthermore, by definition of the constraint set in \eqref{eq:set_of_constraints} and \eqref{eq:W_cond}, we have
\begin{align*}
{W_{0,0}^L}^2 & = {W_{N,N}^L}^2-2r\sum_{j=0}^{N-1}\sum_{m=0}^{N-1-j} p_m\\
& = \left(\frac{1}{1-\Delta} \right)^2-2r\sum_{j=0}^{N-1}\sum_{m=0}^{N-1-j} p_m \geq 1.
\end{align*}
Thus,
\begin{align*}
\sum_{j=0}^{N-1}\sum_{m=0}^{N-1-j} p_m \leq \frac{1}{2r}\left(\left(\frac{1}{1-\Delta} \right)^2-1\right).
\end{align*} Therefore, $\mathbf{p}$ is bounded. Hence, the set of constraints $\mathcal{C}$ is bounded and closed, which implies compactness.
\end{proof}
Next, we show there exists $\boldsymbol{\nu}\in\mathcal{C}$ such that $\boldsymbol{\lambda}<\boldsymbol{\nu}$.
\subsection{Computation of $\lambda_N^L$ such that maximal voltage drop is attained}

In this section, we compute the maximal feasible arrival rate $\lambda_N^L$ such that $\boldsymbol{\lambda}_N^L = (\lambda_N^L,\ldots,\lambda_N^L)\in \mathcal{C}$ (where we define $\lambda_N^L$ below). Then, by \cite[Theorem 11]{Shneer2018}, for all $\lambda<\lambda_N^L$, the Markov process $\mathbf{X}$ is positive recurrent. First, we make the following remark. We do not necessarily need that all arrival rates for all nodes are the same. It is sufficient that $\boldsymbol{\lambda} < \boldsymbol{\lambda}_N^L$ for some arrival rate vector $\boldsymbol{\lambda}$. However, to 
make a comparison between the two power flow models, we let all arrival rates for all nodes be the same, i.e.; $\boldsymbol{\lambda} = (\lambda,\ldots,\lambda)$. Now, we establish that $\boldsymbol{\lambda}_N^L$ is contained in the constraint set $\mathcal{C}$. By definition, we have
\begin{align*}
\lambda_N^L = \frac{\left(\frac{1}{1-\Delta} \right)^2-1}{rN(N+1)}.
\end{align*} Using this arrival rate in the expression for the squared voltage $W_{0,0}^L(\boldsymbol{\lambda}_N^L)$ (cf. Table \ref{tab:load_flow_models}), yields
\begin{align*}
W_{0,0}^L(\boldsymbol{\lambda}_N^L) & = W_{N,N}^L-2r\sum_{j=0}^{N-1}\sum_{m=0}^{N-1-j} \lambda_N^L \\
& = \left(\frac{1}{1-\Delta} \right)^2-2r\lambda_N^L \frac{1}{2}N(N+1)\\
& = 1.
\end{align*}
Hence, the vector $\boldsymbol{\lambda}_N^L = (\lambda_N^L,\ldots,\lambda_N^L)$ is contained in the constraint set $\mathcal{C}$ and for $\lambda<\lambda_N^L$ the Markov process $\mathbf{X}$ is positive recurrent.

\section{Proof of Theorem \ref{THM:DISTFLOW}}\label{section:proof_distflow}
The proof consists of four steps. Similarly, as in the proof of Theorem \ref{THM:LINDIST}, we show that our queuing model description, as in Section \ref{subsec:queueing}, is contained in the general model framework that is given by \cite{Shneer2018} (see Appendix \ref{subsection:model}) and that the constraint set $\mathcal{C}$ is compact for the Distflow model. Then, we need an intermediate step before we can compute the arrival rate $\lambda_N^D$ such that the maximal voltage drop is attained. Unlike the proof of Theorem \ref{THM:LINDIST}, we do not have an explicit expression for the voltage $V_N^D$. Therefore, our next step is to approximate the voltage $V_N^D$ by a continuous function, as $N\to\infty$. In Appendix \ref{sec:conv_VN_to_V(1)}, we show the convergence of the approximation of the voltage to the actual value $V_N^D$, as $N\to\infty$. This approximation allows us to compute the arrival rate $\lambda_N^D$ such that the maximal voltage drop is attained, which concludes the proof.

\subsection{Queuing model framework}
Again, we consider the continuous-time model as in Section \ref{sec:model_description} and we follow the exact same reasoning as in Section \ref{subsec:correct_model}. The only difference in this description is given by the constrained optimization problem. In this case, the voltages are computed according to the Distflow model instead of the Linearized Distflow model.

\subsection{Compactness of the constraint set $\mathcal{C}$}
To establish compactness in case of the Distflow model, we use the fact that the voltages $V_0^D,\ldots,V_N^D$ form an increasing sequence. This is proven in Lemma \ref{lemma:V_increasing}.
\begin{lemma}\label{lemma:V_increasing}
The voltages $V_0^D,\ldots,V_N^D$ are an increasing sequence.
\label{lemma:increasing_seq}
\end{lemma}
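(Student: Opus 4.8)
The plan is to establish the claim directly from the recursion \eqref{eq:DF1_1}--\eqref{eq:DF2_1} by showing that the consecutive differences $D_j := V_{j+1}^D - V_j^D$ are nonnegative for $j = 0,\dots,N-1$. First I would observe that the base case is immediate: from \eqref{eq:DF1_1} we have $D_0 = V_1^D - V_0^D = rp_0 \geq 0$, since $r \geq 0$ and $p_0 \geq 0$ by Assumption on nonnegativity of active power. Next I would rewrite the second-order recursion \eqref{eq:DF2_1} as a first-order recursion for the differences: subtracting $V_j^D$ from both sides of $V_{j+1}^D = 2V_j^D - V_{j-1}^D + rp_j/V_j^D$ gives
\begin{align*}
D_j = V_{j+1}^D - V_j^D = (V_j^D - V_{j-1}^D) + \frac{rp_j}{V_j^D} = D_{j-1} + \frac{rp_j}{V_j^D}, \qquad j = 1,\dots,N-1.
\end{align*}
This is precisely the content of part (1) of Lemma \ref{lemma:alternative_Vn}, which I may invoke directly: $D_j = \sum_{i=0}^j rp_i / V_i^D$.

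Now I would run an induction on $j$ to conclude. Suppose $V_0^D, \dots, V_j^D$ are all positive (which holds for $j=0$ since we set $V_0^D = 1 > 0$). Then each term $rp_i / V_i^D$ for $i = 0,\dots,j$ is nonnegative, hence $D_j \geq 0$, i.e. $V_{j+1}^D \geq V_j^D > 0$. This simultaneously advances the positivity hypothesis and yields the monotonicity step. Iterating from $j=0$ to $j = N-1$ shows $V_0^D \leq V_1^D \leq \cdots \leq V_N^D$, which is the assertion. Note the argument only uses $r \geq 0$, $p_j \geq 0$, and the normalization $V_0^D = 1$, all of which are in force.

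There is essentially no serious obstacle here; the only point requiring a little care is the implicit well-definedness and positivity of the sequence $\{V_j^D\}$, since the recursion involves division by $V_j^D$. This is handled inside the same induction: positivity of $V_j^D$ is maintained as part of the inductive hypothesis, so no denominator ever vanishes and the recursion is well defined throughout. (If one prefers, one can instead cite the double-summation form \eqref{eq:alternative_second_sum} of Lemma \ref{lemma:alternative_Vn}, from which $V_j^D = 1 + \sum_{n=0}^{j-1}\sum_{i=0}^n rp_i/V_i^D \geq 1$ is manifest once positivity of the $V_i^D$ with $i < j$ is known, again by induction.) Hence the lemma follows.
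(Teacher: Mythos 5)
Your proposal is correct and follows essentially the same route as the paper: the paper also argues by induction on $j$, using the recursion \eqref{eq:DF2_1} together with the inductive hypothesis to get $V_{j+1}^D \geq V_j^D + rp_j/V_j^D \geq V_j^D$, which is exactly your identity $D_j = D_{j-1} + rp_j/V_j^D$ in slightly different clothing. Your added care about positivity of the denominators (and the optional appeal to Lemma \ref{lemma:alternative_Vn}) is a harmless refinement of the same argument.
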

\begin{proof}
We give a proof by induction on $j$. Recall that $V_0^D=1$. Then from \eqref{eq:DF1_1}, we have $V_1^D = 1+rp_0\geq 1 = V_0^D$ and for $j=1$, we have from \eqref{eq:DF2_1}
\begin{align*}
V_2^D & = 2V_1^D-V_0^D+\frac{rp_1}{V_1^D}
= (1+rp_0)+rp_0+\frac{rp_1}{1+rp_0}
\geq 1+rp_0 = V_1^D.
\end{align*}
Suppose that $V_j^D \geq V_{j-1}^D$ for some $j\in \{2,\ldots,n-1\}$. Then,  by \eqref{eq:DF2_1} and the induction hypothesis we have
\begin{align*}
V_{j+1}^D &\geq V_j^D + V_{j-1}^D-V_{j-1}^D + \frac{rp_j}{V_j^D}
= V_j^D + \frac{rp_j}{V_j^D} = V_j^D\left(1+\frac{rp_j}{{V_j^D}^2}\right)\geq V_j^D
\end{align*} since $r\geq 0,p_j\geq0$ and $V_j^D\geq V_0^D=1$.
\end{proof}
In fact, it is easier to work with the squared voltages to prove the compactness of the constraint set $\mathcal{C}$ for the Distflow model. Therefore, we present the analogous result in Corollary \ref{cor:V_increasing}.
\begin{corollary}\label{cor:V_increasing}
The squared voltages $W_{0,0}^D, W_{0,1}^D, W_{1,1}^D,\ldots,W_{N-1,N}^D,W_{N,N}^D$ are an increasing sequence.
\end{corollary}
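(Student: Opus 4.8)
The plan is to read the claim off directly from the monotonicity of the unsquared voltages already established in Lemma \ref{lemma:V_increasing}, so that no new induction is needed. First I would unpack the definition of the transformation \eqref{eq:transformation_W}: the entries occurring in the stated sequence are, in order, $W_{0,0}^D = (V_0^D)^2$, and then, alternately, $W_{j-1,j}^D = V_{j-1}^D V_j^D$ and $W_{j,j}^D = (V_j^D)^2$ for $j=1,\ldots,N$. Thus the sequence in question is exactly
\[
(V_0^D)^2,\; V_0^D V_1^D,\; (V_1^D)^2,\; V_1^D V_2^D,\; (V_2^D)^2,\;\ldots,\; V_{N-1}^D V_N^D,\; (V_N^D)^2 .
\]

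Next I would invoke Lemma \ref{lemma:V_increasing} together with the normalisation $V_0^D = 1$ from \eqref{eq:DF1_1}, which gives $1 = V_0^D \le V_1^D \le \cdots \le V_N^D$; in particular every $V_j^D$ is strictly positive. Then for each $j \in \{1,\ldots,N\}$ the two elementary inequalities
\[
(V_{j-1}^D)^2 = V_{j-1}^D \cdot V_{j-1}^D \le V_{j-1}^D \cdot V_j^D, \qquad V_{j-1}^D \cdot V_j^D \le V_j^D \cdot V_j^D = (V_j^D)^2
\]
hold, because multiplying the inequality $V_{j-1}^D \le V_j^D$ by the nonnegative factors $V_{j-1}^D$ and $V_j^D$, respectively, preserves its direction. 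Concatenating these over $j = 1,\ldots,N$ produces the full chain of inequalities running along the sequence displayed above, which is the assertion of the corollary.

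I do not expect any genuine obstacle here: the induction inside Lemma \ref{lemma:V_increasing} does all the real work, and the present statement is a one-line consequence. The only points requiring a little care are the index bookkeeping that matches the labels $W_{\cdot,\cdot}^D$ to the correct products of consecutive voltages (using the symmetry $W_{j-1,j}^D = W_{j,j-1}^D$ from \eqref{eq:transformation_W}), and the remark that it is the positivity of the $V_j^D$ — not merely their nonnegativity — that lets monotonicity survive the quadratic transformation $\mathbf{W}(\cdot)$. I would present the argument in precisely the form sketched above.
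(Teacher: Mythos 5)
Your proposal is correct and follows essentially the same route as the paper: both deduce the claim directly from Lemma \ref{lemma:V_increasing} by multiplying the inequality $V_{j-1}^D\leq V_j^D$ through by the positive voltages. If anything you are slightly more thorough, since you verify both alternating inequalities $W_{j-1,j-1}^D\leq W_{j-1,j}^D$ and $W_{j-1,j}^D\leq W_{j,j}^D$, whereas the paper's proof writes out only the second of these.
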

\begin{proof}
This is an immediate consequence of Lemma \ref{lemma:V_increasing}. Since, $V_j^D\geq 1$, for all $j=0,\ldots,N$, we have
\begin{align*}
W_{j,j}^D = V_{j}^DV_{j}^D \geq V_{j-1}^DV_{j}^D = W_{j-1,j}^D
\end{align*} for all $j=0,\ldots,N$.
\end{proof}
Now, we are in position to prove Lemma \ref{lemma:compactness_distflow}.
\begin{lemma}\label{lemma:compactness_distflow}
The constraint set $\mathcal{C}$, defined in \eqref{eq:set_of_constraints}, is compact for the Distflow model.\label{lemma:compactness}
\end{lemma}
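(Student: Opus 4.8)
The goal is to show that $\mathcal{C}$ is compact for the Distflow model, i.e.\ closed and bounded. Closedness follows exactly as in the Linearized Distflow case: the map $\mathbf{p}\mapsto W_{N,N}^D(\mathbf{p},r)$ is continuous (it is obtained by finitely many iterations of the recursion \eqref{eq:DF1_1}--\eqref{eq:DF2_1}, each step being a continuous operation since the voltages stay bounded away from zero, being $\geq V_0^D = 1$ by Lemma \ref{lemma:V_increasing}), so $\mathcal{C} = \{\mathbf{p}\geq \mathbf{0}: W_{N,N}^D(\mathbf{p},r)-1\leq (\tfrac{1}{1-\Delta})^2-1\}$ is the preimage of a closed set under a continuous map, hence closed.

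The substantive part is boundedness. The plan is to use the linear recursion \eqref{eq:W2}--\eqref{eq:W_cond} together with the monotonicity established in Corollary \ref{cor:V_increasing}. Writing \eqref{eq:W2} as $W_{j,j+1}^D - W_{j,j}^D = (W_{j,j}^D - W_{j-1,j}^D) + rp_j$ and telescoping from the initial values $W_{0,0}^D = 1$, $W_{0,1}^D = 1+rp_0$, one obtains an explicit (linear in $\mathbf{p}$) expression for each $W_{j,j}^D$ and $W_{j,j+1}^D$ as $1$ plus a nonnegative combination of $rp_0,\dots,rp_{j-1}$ (or $rp_j$); concretely one expects something like $W_{N,N}^D = 1 + r\sum_{j=0}^{N-1} c_{N,j}\,p_j$ with all $c_{N,j}\geq 1$. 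Since by Corollary \ref{cor:V_increasing} the sequence $W_{0,0}^D, W_{0,1}^D,\dots,W_{N,N}^D$ is increasing and every intermediate term is dominated by $W_{N,N}^D$, and since each $p_j$ enters $W_{N,N}^D$ with a coefficient bounded below by $r>0$ (in fact $r$ times a positive integer), the constraint $W_{N,N}^D\leq (\tfrac{1}{1-\Delta})^2$ forces
\begin{align*}
r\sum_{j=0}^{N-1} p_j \;\leq\; \left(\frac{1}{1-\Delta}\right)^2 - 1,
\end{align*}
which bounds $\mathbf{p}$. Together with $\mathbf{p}\geq\mathbf{0}$ this gives boundedness, and with closedness, compactness.

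The one point requiring care — and the main obstacle — is making the nonlinear relation \eqref{eq:W1} harmless. Equation \eqref{eq:W2} alone does not close the recursion because $W_{j-1,j}^D$ also appears; one must feed in \eqref{eq:W1}, $W_{j,j}^D = (W_{j-1,j}^D)^2/W_{j-1,j-1}^D$, which is where nonlinearity enters. The clean way around this is \emph{not} to solve the recursion explicitly but to exploit monotonicity: from \eqref{eq:W2} and the increasing property (Corollary \ref{cor:V_increasing}) we have $W_{j,j}^D - W_{j-1,j}^D \geq 0$, hence $W_{j,j+1}^D \geq W_{j,j}^D + rp_j$, and iterating (using also $W_{j+1,j+1}^D\geq W_{j,j+1}^D$) yields $W_{N,N}^D \geq W_{0,0}^D + r\sum_{j=0}^{N-1} p_j = 1 + r\sum_{j=0}^{N-1}p_j$. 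Thus the constraint directly forces $r\sum_{j=0}^{N-1}p_j\leq (\tfrac{1}{1-\Delta})^2-1$, so $\mathbf{p}$ lies in a bounded subset of $\mathbb{R}_{\geq 0}^N$. Combined with closedness this establishes that $\mathcal{C}$ is compact for the Distflow model, completing the proof.
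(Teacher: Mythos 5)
Your proposal is correct and follows essentially the same route as the paper: closedness via continuity of $\mathbf{p}\mapsto W_{N,N}^D(\mathbf{p},r)$, and boundedness by combining the linear recursion \eqref{eq:W2} with the monotonicity of Corollary \ref{cor:V_increasing} to telescope down to $W_{N,N}^D \geq 1 + r\sum_{j=0}^{N-1}p_j$, which the voltage-drop constraint turns into a bound on $\mathbf{p}$. Your bookkeeping of the telescoping differs only cosmetically from the paper's (and your final bound even retains the $-1$ that the paper drops), so the two arguments are the same in substance.
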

\begin{proof}
Since $W_{N,N}^D(\mathbf{p},z)$ is continuous, the set $\mathcal{C}$ is closed.
By definition, we have
\begin{align*}
\left(\frac{1}{1-\Delta} \right)^2 \geq W_{N,N}^D(\mathbf{p},z).
\end{align*} Then, we show a lower bound on the squared voltage $W_{N,N}^D$.  By Corollary \ref{cor:V_increasing} and Equation \eqref{eq:W2}, we have
\begin{align}
{W_{N,N}^D}({\mathbf{p}},z) & \geq {W_{N-1,N}^D}({\mathbf{p}},z) \nonumber \\
& = 2{W_{N-1,N-1}^D}({\mathbf{p}},z)-{W_{N-2,N-1}^D}({\mathbf{p}},z)+rp_{N-1} \nonumber \\
& \geq 2{W_{N-2,N-1}^D}({\mathbf{p}},z)-{W_{N-2,N-1}^D}({\mathbf{p}},z)+rp_{N-1}\nonumber \\
& = {W_{N-2,N-1}^D}({\mathbf{p}},z) +rp_{N-1}. \label{eq:inequalityWNN}
\end{align} Now, apply the definition of $W_{N-2,N-1}^D(\mathbf{p},z)$ in Equation \eqref{eq:W2} and Corollary \ref{cor:V_increasing} to Equation \eqref{eq:inequalityWNN} to find the inequality
\begin{align}
{W_{N-2,N-1}^D}({\mathbf{p}},z) \geq {W_{N-3,N-2}^D}({\mathbf{p}},z)+rp_{N-2}.\label{eq:inequalityWNN2}
\end{align} Inserting \eqref{eq:inequalityWNN2} in \eqref{eq:inequalityWNN} gives
\begin{align*}
{W_{N,N}^D}({\mathbf{p}},z) & \geq {W_{N-3,N-2}^D}({\mathbf{p}},z)+rp_{N-2}+rp_{N-1}.
\end{align*} Applying the definition of ${W_{j-1,j}^D}({\mathbf{p}},z)$ and Corollary \ref{cor:V_increasing} over and over again, for $j=1,\ldots,N-3$ in reversed order, results in the inequality
\begin{align*}
{W_{N,N}^D}({\mathbf{p}},z) & \geq 1+r\left(p_0+\cdots+p_{N-1} \right).
\end{align*} Thus,
\begin{align*}
\left(p_0+\cdots+p_{N-1} \right) \leq \frac{1}{r}\left(\frac{1}{1-\Delta} \right)^2.
\end{align*} Therefore, $\mathbf{p}$ is bounded. Hence, the set of constraints $\mathcal{C}$ is bounded and closed, which implies compactness.
\end{proof}

\subsection{Approximation of $V_N^D$ by continuous counterpart}\label{subsec:approximation_continuous counterpart}
Unlike the proof of Theorem \ref{THM:LINDIST}, we do not have an explicit expression for the voltage $V_N^D$. Therefore, we define a continuous extension of the voltages $V_j^D,\ j=0,1,\ldots,N$ to a function $V_N^D(t): [0,N] \to \mathbb{R}_+$. That is, for all $j\in \{0,\ldots,N\}$, we show $V_{j}^D = V_N^D(j)$. A suitable and natural extension, cf. \eqref{eq:alternative_second_sum}, is given by
\begin{align}
V_N^D(t) & := 1+k_N \int_0^{\floor{t}}\int_0^{\ceil{s}}\frac{1}{V_N^D({u})} du\ ds,\ t\in[0,N];\label{eq:continuous_extension_VD}
\end{align} where $k_N = r\lambda_N^D$.\ 
Here, the arrival rate $\lambda_N^D$ is the same for all charging stations. See Remark \eqref{remark:equal_arrival_rates}. We include $N$ in the notation of the function $V_N^D(t)$ and $k_N$ to stress its dependence on the number of charging stations $N$.

Furthermore, we define a certain scaling for the continuous extension $V_N^D(t)$ such that it differs from the integral equation
\begin{align}
V(t) = 1+a\int_0^t\int_0^x \frac{1}{V(y)}dy\ dx\label{eq:integral_equation_V}
\end{align} by a negligible term. Here, the value $a$ is chosen such that $k_N = \frac{a}{N^2}$.
\begin{definition}\label{def:scale}
Denote the voltages under the Distflow model in continuous time by $V_N^D(t)$ as in \eqref{eq:continuous_extension_VD}, and scale them by $N$,
\begin{align}
\overline{V}_N^D(t) := V_N^D(Nt), \ t\in[0,1].\label{eq:scaled_Vn}
\end{align}
\end{definition}
This scaling allows us to show, see Section \ref{subsubsec:derivation_of_scaled_version}, that the representation of the scaled version of the voltages $\overline{V}_N^D(t)$ can be written as
\begin{align}
\overline{V}_N^D(t) 
& = 1+a\int_0^{t}\int_0^{s}\frac{1}{\overline{V}_N^D(u)}du\ ds +\overline{R}_N(t)\label{eq:mappingH},
\end{align} where the remainder term $\overline{R}_N(t)$ vanishes as $N\to\infty$. Hence, \eqref{eq:mappingH} can be expected to converge to the integral equation given by \eqref{eq:integral_equation_V}.
This convergence is exactly shown in Proposition \ref{THM:CONVERGENCE}.

\begin{proposition}\label{THM:CONVERGENCE}
Let $\overline{V}_N^D(t)$, for $t\in[0,1]$, as defined in \eqref{eq:mappingH} and $V(t)$ as in \eqref{eq:integral_equation_V}. 
Then,
\begin{align*}
\sup_{0\leq t\leq 1} |\overline{V}_N^D(t)-V(t)|\to 0\ \text{as}\ N\to\infty.
\end{align*}
\end{proposition}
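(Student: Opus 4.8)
The argument is a Grönwall-type comparison of the two integral representations \eqref{eq:mappingH} and \eqref{eq:integral_equation_V}.

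First I would record two elementary facts. Both functions are bounded below by $1$ on $[0,1]$: for $V$ this is immediate from \eqref{eq:integral_equation_V}, since $a>0$ and the integrand is positive (that $V$ exists on all of $[0,1]$ follows from the explicit solution in Appendix~\ref{sec:integral_equation}, or from noting that \eqref{eq:integral_equation_V} is equivalent to $V''=a/V$, $V(0)=1$, $V'(0)=0$, so $V''\le a$ and $V$ stays finite on bounded intervals); for $\overline{V}_N^D$ it follows from \eqref{eq:continuous_extension_VD}, which makes $\overline{V}_N^D$ a step function taking the values $V_j^D\ge V_0^D=1$ of Lemma~\ref{lemma:V_increasing}. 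Since $\overline{V}_N^D$ is a finite step function and $V$ is continuous, the difference $D_N(t):=|\overline{V}_N^D(t)-V(t)|$ is a bounded measurable function on $[0,1]$.

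Next I would subtract \eqref{eq:mappingH} from \eqref{eq:integral_equation_V} (the constants $1$ cancel) and invoke the lower bounds:
\[
\overline{V}_N^D(t)-V(t)=a\int_0^t\int_0^s\Bigl(\tfrac{1}{\overline{V}_N^D(u)}-\tfrac{1}{V(u)}\Bigr)\,du\,ds+\overline{R}_N(t),\qquad \Bigl|\tfrac{1}{\overline{V}_N^D(u)}-\tfrac{1}{V(u)}\Bigr|=\tfrac{|\overline{V}_N^D(u)-V(u)|}{\overline{V}_N^D(u)\,V(u)}\le D_N(u).
\]
Writing $\varepsilon_N:=\sup_{0\le t\le 1}|\overline{R}_N(t)|$ — which tends to $0$ as $N\to\infty$ by the remainder bound established when \eqref{eq:mappingH} is derived in Section~\ref{subsubsec:derivation_of_scaled_version} — this yields $D_N(t)\le\varepsilon_N+a\int_0^t\int_0^s D_N(u)\,du\,ds$. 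Since $D_N\ge0$, the inner integral is nondecreasing in $s$, so $\int_0^t\int_0^s D_N(u)\,du\,ds\le t\int_0^t D_N(u)\,du\le\int_0^t D_N(u)\,du$ on $[0,1]$, hence $D_N(t)\le\varepsilon_N+a\int_0^t D_N(u)\,du$. Grönwall's inequality then gives $D_N(t)\le\varepsilon_N e^{at}\le\varepsilon_N e^{a}$ for all $t\in[0,1]$, so $\sup_{0\le t\le1}|\overline{V}_N^D(t)-V(t)|\le\varepsilon_N e^{a}\to0$, which is the claim.

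The only genuinely nontrivial ingredient is the uniform smallness $\varepsilon_N\to0$ of the remainder. Here $\overline{R}_N$ collects the discretization error between the piecewise-constant extension \eqref{eq:continuous_extension_VD} and the smooth double integral, and the point is that this error is $O(1/N)$ \emph{uniformly} in $t$ because the increments $V_{j+1}^D-V_j^D$ are themselves $O(1/N)$ (which follows from the second-difference form of the recursion \eqref{eq:DF2_1} together with $V_j^D\ge1$). This is exactly what is verified in Section~\ref{subsubsec:derivation_of_scaled_version}; granting it, the comparison above is routine, the one thing to watch being that one needs the supremum norm of $\overline{R}_N$ to vanish rather than mere pointwise decay.
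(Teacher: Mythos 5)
Your proof is correct, but it takes a genuinely different route from the paper. You reduce the double-integral inequality $D_N(t)\le\varepsilon_N+a\int_0^t\int_0^s D_N(u)\,du\,ds$ to a first-order one via $\int_0^t\int_0^s D_N\,du\,ds\le t\int_0^t D_N\,du\le\int_0^t D_N\,du$ on $[0,1]$ and then apply Gr\"onwall, obtaining the uniform bound $D_N(t)\le\varepsilon_N e^{a}$ in one stroke (and, as a bonus, an explicit rate $O(1/N)$ once $\varepsilon_N=O(1/N)$ is known). The paper instead treats the map $H f = 1+a\int_0^{\cdot}\int_0^s f(u)^{-1}\,du\,ds$ as a contraction with constant $\kappa(h)=ah^2/2$ on a short interval $[0,h]$ with $h$ chosen according to the size of $a$, and then propagates the convergence to all of $[0,1]$ by induction over the partition $\{[mh,(m+1)h]\}$ (Lemmas~\ref{lemma:contraction_H}, \ref{lemma:case_m=1} and \ref{lemma:case_general_m}). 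Your approach buys a shorter argument with no case distinction on $a$ and no interval bookkeeping; the paper's approach makes the contraction structure of $H$ explicit, which is reusable for existence/uniqueness of the limit equation. Both arguments stand or fall on the same nontrivial ingredient, namely $\sup_{t\in[0,1]}|\overline{R}_N(t)|\to 0$, which you correctly isolate and justify by the $O(1/N)$ increments $V_{j+1}^D-V_j^D=\sum_{i\le j}k_N/V_i^D\le a/N$; this is exactly what the paper verifies in Lemmas~\ref{lemma:Vn-Vn+1/t} and \ref{lemma:Rn}. The only point to be careful about is that Gr\"onwall must be applied to a merely piecewise-constant (hence bounded measurable, not continuous) $D_N$, which you note and which is unproblematic for the integral form of the inequality.
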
 Thus, the appropriately scaled voltages under the Distflow model $\overline{V}_N^D(t)$ can be approximated by $V(t)$ as the number of charging stations goes to infinity. The solution to the integral equation $V(t)$ can be found in Appendix \ref{sec:integral_equation}. Using this approximation, we are able to compute the arrival rate $\lambda_N^D$ such that the maximal allowed voltage drop is attained.

\subsection{Computation of $\lambda_N^D$ such that maximal voltage drop is attained}\label{subsec:computation_lambda}
In this section, we compute the arrival rate $\lambda_N^D$ such that the maximal allowed voltage drop is attained. First, we show that we can approximate $V_N^D$ by $V(1)$ (cf. \eqref{eq:integral_equation_V}). Then, using this approximation, we compute the value $a$, which is related to the arrival rate $\lambda_N^D$ (cf. \eqref{eq:k_N}), that solves the equation $V(1)=\frac{1}{1-\Delta}$. In other words, we compute the value $a$ such that the maximal voltage drop is attained. However, this solution $a$ is computed using the approximation $V(1)$. So, we conclude by showing that this solution $a$ also maximizes the voltage drop using $V_N^D$.

The recursion for the voltages under the Distflow model (cf. Table \ref{tab:load_flow_models}) with arrival rate $\lambda_N^D$, is given by,
\begin{align}
V_0^D & = 1\ \text{and}\ V_1^D = 1 + k_N, \label{eq:DF1_scaled_new}\\
V_{j+1}^D & = 2V_j^D-V_{j-1}^D+\frac{k_N}{V_j^D}, \quad j = 1,\ldots,N-1. \label{eq:DF2_scaled_new}
\end{align} where
\begin{align}
k_N=r\lambda_N^D = \frac{a}{N^2}\geq 0.\label{eq:k_N}
\end{align}
Then, according to the voltage drop constraint in  \eqref{eq:set_of_constraints}, the maximum feasible arrival $\lambda_N^D$ is such that the voltage $V_N^D$, is equal to $1/(1-\Delta)$. In Section \ref{subsec:approximation_continuous counterpart}, we found an equivalent expression for the voltage $V_N^D$, namely $\overline{V}_N^D(1)$. Furthermore, using Proposition \ref{THM:CONVERGENCE}, we found an approximation of the voltage $V_N^D$, namely $V(1)$, as in \eqref{eq:integral_equation_V}.

Differentiating \eqref{eq:integral_equation_V} twice gives the differential equation form of the integral equation \eqref{eq:integral_equation_V}; i.e.,
\begin{align}
V''(t) = \frac{a}{V(t)},\label{eq:diff_eq_Vt1}
\end{align} for $a\in\mathbb{R}_+$, with initial conditions
\begin{align}
V(0)=1\  \text{and}\  V'(0)=0.\label{eq:diff_eq_Vt}
\end{align}
By Lemma \ref{lemma:solution_f}, the solution to the differential equation \eqref{eq:diff_eq_Vt1}--\eqref{eq:diff_eq_Vt} is given in terms of the function $f_0(x)=\exp(U^2(x))$ where $U(x)$ is related to the inverse of the imaginary error function. From Lemma \ref{lemma:solution_f}, it follows that
\begin{align*}
V(t) = f_0(t\sqrt{a}).
\end{align*} Therefore, an approximation of the voltage $V_N^D$, namely $V(1)$, is given by,
\begin{align}
V(1)=f_0(\sqrt{a}).\label{eq:V1_approx}
\end{align}
Although it is possible to compute the arrival rate $\lambda_N^D$ such that $V_N^D = \frac{1}{1-\Delta}$ using an iterative approach, we can obtain a convenient approximation by using \eqref{eq:V1_approx}. In Section \ref{sec:improvement}, we show how to compute the arrival rate $\lambda_N^D$ using Newton's method and that the approximation \eqref{eq:V1_approx} yields already a good approximation for the arrival rate $\lambda_N^D$.

With the approximation $V(1)$ (as $N\to\infty$) at hand, we compute that value of $a$ that solves the equation $V(1)=\frac{1}{1-\Delta}$, i.e., $f_0(\sqrt{a}) = \frac{1}{1-\Delta}$ and use that $r\lambda_N^D = \frac{a}{N^2}$ (cf. \eqref{eq:k_N}) to relate it to a critical arrival rate. Solving for $a$ such that $f_0(\sqrt{a}) = \frac{1}{1-\Delta}$, yields,
\begin{align*}
a = \left(f_0^{-1}\left(\frac{1}{1-\Delta}\right) \right)^2.
\end{align*} Notice that the inverse function $f_0^{-1}$ is given by
\begin{align*}
f_0^{-1}(x) = \sqrt{\frac{\pi}{2}}\text{erfi}\left(\sqrt{\ln(x)} \right).
\end{align*} Now, for any fixed $a$, we have by Theorem \ref{THM:CONVERGENCE} that $|\overline{V}_N^D(1)-V(1)|\to 0$ uniformly as $N\to\infty$ and by construction that $V(1)=\frac{1}{1-\Delta}$. Now, let \begin{align}
\phi_N := \max\Big\{\overline{V}_N^D(1)\ \text{s.t.}\  \overline{V}_N^D(1)\leq \frac{1}{1-\Delta}\Big\}, \label{eq:phi_N}
\end{align} and
\begin{align*}
\phi := \max\Big\{V(1)\ \text{s.t.}\  V(1)\leq \frac{1}{1-\Delta}\Big\},
\end{align*} denote the maximal voltages and the approximation of the maximal voltages under the voltage drop constraint, respectively. Then, by \cite[Lemma 3.1]{Marti1975}, $\phi_N$ converges to $\phi$ as $N\to\infty$ and since $a=N^2r\lambda_N^D$ maximizes $V(1)$, $a$ maximizes $\overline{V}_N^D(1)$ as well and the limit point $r\lambda_c^D$ of $N^2r\lambda_N^D$ is a solution to \eqref{eq:phi_N}, i.e.\ $N^2r\lambda_N^D \to r\lambda_c^D = \frac{\pi}{2}\text{erfi}^2\left(\sqrt{\ln\left(\frac{1}{1-\Delta} \right)}\right)$, as $N\to\infty$. 

\section{Newton's iterations for the Distflow model}\label{sec:improvement}
In Sections \ref{subsec:approximation_continuous counterpart} and \ref{subsec:computation_lambda}, we made an effort to approximate the voltage $V_N^D$ to approximate the arrival rate $\lambda_N^D$ such that $V_N^D=\frac{1}{1-\Delta}$ easier. However, it is possible to compute the arrival rate $\lambda_N^D$ such that $V_N^D=\frac{1}{1-\Delta}$ using the recursive equations \eqref{eq:DF1_scaled_new} and \eqref{eq:DF2_scaled_new} using an iterative approach. 
In other words, we show how to compute, using the recursive equations \eqref{eq:DF1_scaled_new} and \eqref{eq:DF2_scaled_new}, for a fixed value of the number of charging stations $N$ and a maximum allowed voltage drop $\Delta>0$, the number $a$ such that the voltage drop between the root node and node $N$ is exactly equal to $\Delta$. Furthermore, we present some numerical tests showing the convergence to the number $a$.

In what follows, we propose to use Newton's method to find the solution $a$ that yields $V_N^D=\frac{1}{1-\Delta}$. 
We need to initialize a particular $a_0$ to apply Newton's method.  Here, we are led by Theorem \ref{THM:CONVERGENCE}. We look for an initial guess $a_0$ such that $V(1) = f_0\left(\sqrt{a}\right) = \frac{1}{1-\Delta}$, we choose
\begin{align*}
a_0 & = \left(f_0^{-1}\left(\frac{1}{1-\Delta} \right)\right)^2 \nonumber\\
& = \frac{\pi}{2}\text{erfi}^2\left(\sqrt{\ln\left(\frac{1}{1-\Delta}\right)} \right),
\end{align*} and iterate according to
\begin{align}
a_{j+1}=a_j-\frac{V_N^D-(\frac{1}{1-\Delta})}{Y_N^D/N^2},\quad j=0,1,\ldots\label{eq:iteration_step}
\end{align} where $Y_N^D$ denotes the derivative of $V_N^D$ with respect to $k_N$. Note that $Y_N^D/N^2$ denotes the derivative of $V_N^D$ with respect to $a$. We need to compute $Y_N^D$, and this can be done by differentiating in \eqref{eq:DF1_scaled_new} and \eqref{eq:DF2_scaled_new} with respect to $k_N$, and so that
\begin{align}
Y_0^D=0, Y_1^D=1\label{eq:Wn_initial_a}
\end{align} and
\begin{align}
Y_{n+1}^D - 2Y_n^D+Y_{n-1}^D = \frac{1}{V_n^D}-\frac{k_NY_n^D}{(V_n^D)^2}.\label{eq:diff_W_newton}
\end{align} Observe that the right-hand side of \eqref{eq:diff_W_newton} also involves $V_n^D$. Thus, one should compute $V_n^D$ and $Y_n^D$ simultaneously and recursively according to the initialization in \eqref{eq:DF1_scaled_new} and \eqref{eq:Wn_initial_a}, and the recursion step for $n=1,\ldots,N-1$ in \eqref{eq:DF2_scaled_new} and \eqref{eq:diff_W_newton}. Furthermore, the iteration step in \eqref{eq:iteration_step} is well defined if $Y_N^D$ is positive. This will, for reasonable values of $a$, be shown in Lemma \ref{lemma:Wn_positive}. Before we move to Lemma \ref{lemma:Wn_positive}, we comment on the range of reasonable values for $a$. Notice, from the condition that $f_0(\sqrt{a}) = \frac{1}{1-\Delta}$, it follows from a Taylor expansion about $\Delta=0$ that
\begin{align*}
\int_0^{\sqrt{\ln(\frac{1}{1-\Delta})}}\exp(u^2)du = \sqrt{\frac{a}{2}}.
\end{align*} For small $\Delta$, which is usually the case (cf. Section \ref{subsec:distribution}), it follows that $\Delta\approx \frac{a}{2}$, which can also be seen in Tables \ref{tab:a_first}, \ref{tab:a_0.05} and \ref{tab:a_0.1}. Therefore, as we consider $0<a<2$, these are reasonable values given our application.

\begin{lemma}\label{lemma:Wn_positive}
Consider the recursion in \eqref{eq:Wn_initial_a} and \eqref{eq:diff_W_newton} with $0<a<2$. Then the sequence $Y_0,Y_1,\ldots,Y_N$ is positive, increasing and convex.
\end{lemma}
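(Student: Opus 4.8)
The plan is to prove the three properties — positivity, monotonicity, and convexity — simultaneously by induction on $n$, since the recursion \eqref{eq:diff_W_newton} couples $Y_n^D$ to $V_n^D$, which is itself positive, increasing and (by Lemma \ref{lemma:V_increasing}) bounded below by $1$. Writing $\Delta^2 Y_n := Y_{n+1}^D - 2Y_n^D + Y_{n-1}^D$, the recursion reads $\Delta^2 Y_n = \frac{1}{V_n^D} - \frac{k_N Y_n^D}{(V_n^D)^2}$, with $k_N = a/N^2$. Convexity of the sequence is exactly the statement $\Delta^2 Y_n \geq 0$ for all $n$, so the crux is to show the right-hand side stays nonnegative, i.e. $k_N Y_n^D \leq V_n^D$ along the recursion. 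Monotonicity ($Y_{n+1}^D \geq Y_n^D$) then follows because the forward differences $D_n := Y_{n+1}^D - Y_n^D$ start at $D_0 = Y_1^D - Y_0^D = 1 > 0$ and are nondecreasing in $n$ by convexity; positivity of $Y_n^D$ follows from $Y_0^D = 0$, $Y_1^D = 1$ and monotonicity.

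The key induction step is therefore the bound $k_N Y_n^D \leq V_n^D$, or something slightly stronger that closes the induction. I would track the joint hypothesis: for all $m \leq n$, (i) $0 \le Y_m^D$, (ii) $D_{m-1} \le D_m$ (convexity, giving $1 = D_0 \le D_m$ hence $Y_m^D \ge m$ is too weak — better to also carry an explicit upper bound), and (iii) an upper bound of the form $Y_m^D \le C\, m$ or $Y_m^D \le$ (something comparable to $V_m^D / k_N$). Since $V_m^D \ge V_0^D = 1$ and, by Lemma \ref{lemma:V_increasing}, $V_m^D$ is increasing, while $V_m^D \le V_N^D \le \frac{1}{1-\Delta} \le 2$ on the feasible range, one has $\frac{1}{V_m^D} \in [\tfrac12, 1]$. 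A natural comparison is with the undamped recursion $Z_{n+1} - 2Z_n + Z_{n-1} = 1/V_n^D \le 1$, $Z_0 = 0, Z_1 = 1$, whose solution satisfies $Z_n \le n + \binom{n}{2} \le n^2$; if $Y_n^D \le Z_n \le n^2$ then $k_N Y_n^D \le \frac{a}{N^2} n^2 \le a < 2$, and since we need $k_N Y_n^D \le V_n^D$ and $V_n^D \ge 1$, the bound $a \le 2$ combined with $V_n^D \ge 1$ is borderline — so I expect the argument actually needs $k_N Y_n^D \le V_n^D$ derived more carefully, using that on the interval of $n$ where the constraint $V_n^D \le \frac{1}{1-\Delta}$ holds we in fact have room, and that the damping term $-\frac{k_N Y_n^D}{(V_n^D)^2}$ only shrinks $Y$ relative to $Z$, which tightens rather than loosens the bound.

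Concretely the steps are: (1) base cases $n = 0, 1$ check directly: $Y_0^D = 0 \le Y_1^D = 1$, $D_0 = 1 > 0$, and $\Delta^2 Y_1 = \frac{1}{V_1^D} - \frac{k_N}{(V_1^D)^2} = \frac{1}{V_1^D}\bigl(1 - \frac{k_N}{V_1^D}\bigr) \ge 0$ since $V_1^D = 1 + k_N > k_N$; (2) inductive step: assume $Y_0^D \le \dots \le Y_n^D$ with forward differences nondecreasing and $\Delta^2 Y_m \ge 0$ for $m < n$, and the comparison bound $Y_n^D \le Z_n$; (3) show $\Delta^2 Y_n \ge 0$, i.e. $k_N Y_n^D \le V_n^D$, using $Y_n^D \le Z_n \le n^2 \le N^2$ so $k_N Y_n^D \le a < 2$ together with the sharper observation that along the feasible recursion $V_n^D$ grows at least as fast as $k_N Y_n^D$ does (comparing the recursions for $V_n^D$ and $k_N Y_n^D$ termwise, both start at comparable values and $V$'s increments dominate); (4) conclude $D_n \ge D_{n-1} \ge 1 > 0$, hence $Y_{n+1}^D \ge Y_n^D$, and propagate the comparison bound $Y_{n+1}^D \le Z_{n+1}$.

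The main obstacle is step (3): making the inequality $k_N Y_n^D \le V_n^D$ genuinely tight enough. The naive bound $Y_n^D \le n^2$ only gives $k_N Y_n^D \le a$, which needs $a \le V_n^D$, and since $V_n^D$ can be as small as $1$ this requires essentially $a \le 1$, not $a < 2$. So the real work is to exploit the coupling: one should compare the two second-order recursions and show that $V_n^D - k_N Y_n^D$ is itself nonnegative and increasing — note $V_n^D - k_N Y_n^D$ satisfies $(V^D - k_N Y^D)_{n+1} - 2(V^D - k_N Y^D)_n + (V^D - k_N Y^D)_{n-1} = \frac{k_N}{V_n^D} - k_N\bigl(\frac{1}{V_n^D} - \frac{k_N Y_n^D}{(V_n^D)^2}\bigr) = \frac{k_N^2 Y_n^D}{(V_n^D)^2} \ge 0$, with initial values $V_0^D - k_N Y_0^D = 1 > 0$ and $V_1^D - k_N Y_1^D = 1 + k_N - k_N = 1 > 0$ — so $V_n^D - k_N Y_n^D$ is a convex sequence starting at $1, 1$, hence nonincreasing then... no, convex with equal first two terms means nondecreasing, so $V_n^D - k_N Y_n^D \ge 1 > 0$ for all $n$. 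That is the clean way to close it, and it makes the whole induction go through cleanly; I would organize the final writeup around this auxiliary convex sequence.
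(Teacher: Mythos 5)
Your final argument is correct, and it takes a genuinely different route from the paper. The paper proves the lemma by deriving the a priori quadratic bound $Y_{n+1}^D\leq\tfrac12(n+1)(n+2)$ (from $\Delta^2 Y_n\leq 1$) and then checking that $\tfrac12 n(n+1)<V_n^D/k_N$ for all $n\leq N-1$ using only $V_n^D\geq 1$ and $k_N=a/N^2$; this is where the hypothesis $a<2$ (in fact $a<2N/(N-1)$) enters. Your exploratory middle section is essentially this same strategy with the slightly lossier bound $Y_n^D\leq n^2$, and your diagnosis that it then only covers roughly $a\leq 1$ is right — the paper survives because $\tfrac12 n(n+1)$ with $n\leq N-1$ buys the extra factor of $2$. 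Your closing argument via the auxiliary sequence $E_n:=V_n^D-k_NY_n^D$ is cleaner and strictly stronger: the identity $\Delta^2 E_n=k_N^2Y_n^D/(V_n^D)^2$ together with $E_0=E_1=1$ shows, in a joint induction carrying $Y_n^D\geq 0$, that $E_n\geq 1$ for all $n$, hence $k_NY_n^D\leq V_n^D-1<V_n^D$ and $\Delta^2Y_n>0$ with no restriction on $a$ at all. The one point to make explicit in the writeup is the circularity you must break: nonnegativity of $\Delta^2E_n$ presupposes $Y_n^D\geq 0$, so positivity of $Y$, monotonicity of $Y$, and the bound $E_n\geq 1$ must all be carried together in a single induction (as you indicate). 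I would cut the failed $Y_n^D\leq n^2$ detour and the irrelevant appeal to $V_n^D\leq\frac{1}{1-\Delta}$ from the final version and organize the proof entirely around $E_n$.
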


\begin{proof}
The right-hand side of \eqref{eq:diff_W_newton} is positive for all $n=1,2,\ldots,N-1$ such that $Y_n^D < V_n^D/k_N$. Therefore, since, $Y_0^D = 0 < 1 = Y_1$, the sequence $Y_n^D, n=1,2,\ldots$ is increasing and convex (and thus positive) as long as $Y_n^D < V_n^D/k_N$. For such $n$, we have certainly
\begin{align*}
Y_{n+1}^D-2Y_n^D+Y_{n-1}^D = \frac{1}{V_n^D}\left(1-\frac{k_NY_n^D}{V_n^D} \right)\leq 1,
\end{align*} since
\begin{align*}
\frac{k_NY_n^D}{(V_n^D)^2}<\frac{1}{V_n^D}\leq 1.
\end{align*} Here we used Lemma \ref{lemma:V_increasing}. So, using $Y_0^D=0,Y_1^D=1$ and induction, we get
\begin{align*}
Y_{n+1}^D-Y_n^D \leq (n+1),
\end{align*} and consequently for these $n$, by summing from $m=0,\ldots,n$,
\begin{align*}
Y_{n+1}^D = \sum_{m=0}^n Y_{m+1}^D-Y_m^D \leq \sum_{m=0}^n m+1 = \frac{1}{2}(n+1)(n+2).
\end{align*} Therefore, the condition $Y_n^D<V_n^D/k_N$ continues to be satisfied certainly when
\begin{align}
\frac{1}{2}n(n+1)<V_n^D/k_N.\label{eq:certainly_satisfied}
\end{align} The right-hand side of \eqref{eq:certainly_satisfied} is at least equal to $N^2/a$, and the left-hand side of \eqref{eq:certainly_satisfied} is at most equal to $\frac{1}{2}(N-1)N$, since $n=0,\ldots,N-1$. 
Thus \eqref{eq:certainly_satisfied} holds for all $n=1,2,\ldots,N-1$ when $\frac{1}{2}(N-1)< N/a$, i.e., when
\begin{align*}
a < \frac{2N}{N-1}.
\end{align*}
\end{proof}

\subsection{Numerical validation of Newton's method}

Below we summarize some numerical tests. For fixed values of $a$, ranging from 0.01 to 0.1, and several values of $N=10,\ldots,10^5$, we compute $V_N^D$, and we put
\begin{align*}
\frac{1}{1-\Delta} = V_N^D,
\end{align*} and using the Newton scheme above we compute the solution $\bar{a}$ such that $V_N^D = \frac{1}{1-\Delta}$ with stopping criteria $\frac{|a_{j+1}-a_j|}{a_j}<10^{-10}$.

\begin{table}[h!]
\begin{center}
 \begin{tabular}{|c r r r r|}
 \hline
 \multicolumn{5}{|c|}{$a=0.01$} \\
 \hline
 $N$ & $\frac{1}{1-\Delta}$ & $a_0$ & $\bar{a}$ & $\#$Newton iterations  \\ [0.5ex]
 \hline\hline
 10 & 1.005495062463669 & 0.011000182805825 & 0.009999999999999 & 3\\
 \hline
 $10^2$ & 1.005045760405502 & 0.010100001678824 & 0.009999999999995 & 3\\
 \hline
 $10^3$ & 1.005000834727210 & 0.010010000022962
 & 0.010000000000980
 & 3\\
 \hline
 $10^4$ & 1.004996342221457 & 0.010001000055592 & 0.009999999999841 & 8\\
 \hline
 $10^5$ & 1.004995909696177 & 0.010000133565834 & 0.009999999993546 & 9\\
 \hline
\end{tabular}
\end{center}

\caption{Illustration of Newton scheme in the computation of the solution $\bar{a}$ such that $V_N^D=\frac{1}{1-\Delta}$, for $a=0.01$.}
\label{tab:a_first}
\end{table}

\begin{table}[h!]
\begin{center}
 \begin{tabular}{|c r r r r|}
 \hline
 \multicolumn{5}{|c|}{$a=0.05$} \\
 \hline
 $N$ & $\frac{1}{1-\Delta}$ & $a_0$ & $\bar{a}$ & $\#$Newton iterations  \\ [0.5ex]
 \hline\hline
 10 & 1.027377786724925 & 0.055004518819866& 0.049999999999998 & 3\\
 \hline
 $10^2$ & 1.025144992180518 & 0.050500041530882 & 0.050000000000026
 & 3\\
 \hline
 $10^3$ & 1.024921824633206 & 0.050050000413244

 & 0.049999999999354 & 2\\
 \hline
 $10^4$ & 1.024899508844976 & 0.050005000058683
& 0.049999999998112 & 2\\
 \hline
 $10^5$ & 1.024897282801763 & 0.050000511203957
 & 0.050000000039374 & 4\\
 \hline
\end{tabular}
\end{center}
\caption{Illustration of Newton scheme in the computation of the solution $\bar{a}$ such that $V_N^D=\frac{1}{1-\Delta}$, for $a=0.05$.}
\label{tab:a_0.05}
\end{table}

\begin{table}[h!]
\begin{center}
 \begin{tabular}{|c r r r r|}
 \hline
 \multicolumn{5}{|c|}{$a=0.1$} \\
 \hline
 $N$ & $\frac{1}{1-\Delta}$ & $a_0$ & $\bar{a}$ & $\#$Newton iterations  \\ [0.5ex]
 \hline\hline
 10 & 1.054517088899833 & 0.110017830743300& 0.099999999999999 & 3\\
 \hline
 $10^2$ & 1.050084740193820 & 0.101000164022137 & 0.100000000000124 & 3\\
 \hline
 $10^3$ & 1.049641947170216 & 0.100100001626823 & 0.099999999999965 & 3\\
 \hline
 $10^4$ & 1.049597671662610 & 0.100010000152368 & 0.100000000003042 & 4\\
 \hline
 $10^5$ & 1.049593246696348 & 0.100001005329048 & 0.099999999991939 & 4\\
 \hline
\end{tabular}
\end{center}
\caption{Illustration of Newton scheme in the computation of the solution $\bar{a}$ such that $V_N^D=\frac{1}{1-\Delta}$, for $a=0.1$.}
\label{tab:a_0.1}
\end{table}

It appears that the initialization $a_0$ is already a good approximation for $a$ as the differences between $a_0$ and $a$ for all situations are small. For example, the relative error of the initialization $a_0$ with respect to the true value $a=0.01$ is $10\%$ in the case of $N=10$ and decreases even further when $N$ increases. Moreover, the number of iterations needed to converge to a final estimate $\bar{a}$ is small.

\begin{appendices}
\section{Equivalence for the Distflow model}\label{sec:equivalence}
In Section \ref{sec:model_description}, we find the equivalence \eqref{eq:equivalence} between the voltages under the Distflow model at the root node and the node at the end of the line. To prove the equivalence in Proposition \ref{prop:equivalence}, we need Lemmas \ref{lemma:alternative_Vn} and \ref{lemma:v_increasing}.

\subsection{Proof of Lemma \ref{lemma:alternative_Vn}}
\begin{proof} Both identities follow from the initialization in \eqref{eq:DF1} and the definition of the sequence in \eqref{eq:DF2}.
\begin{enumerate}
\item We have from \eqref{eq:DF2} for $j=1,2,\ldots,N-1$,
\begin{align}
{V_{j+1}}^D-{V_j}^D = {V_j}^D-{V_{j-1}}^D+\frac{rp_j}{{V_j}^D}.\label{eq:Vn_summing}
\end{align} We have from the identity in \eqref{eq:Vn_summing} by summation that
\begin{align}
{V_{i+1}}^D-{V_i}^D & = ({V_1}^D-{V_0}^D) + \sum_{j=1}^i \frac{rp_j}{{V_j}^D}\nonumber\\
& = \frac{rp_0}{{V_0}^D} + \sum_{j=1}^i \frac{rp_j}{{V_j}^D}\nonumber\\
& = \sum_{j=0}^i \frac{rp_j}{{V_j}^D},\quad i = 0,\ldots,N-1.\label{eq:alternative_Vn_sequence}
\end{align}
\item We have from the identity in \eqref{eq:alternative_Vn_sequence} by summation that
\begin{align*}
{V_{n}}^D-{V_0}^D = \sum_{i=0}^{n-1}\sum_{j=0}^i \frac{rp_j}{{V_j}^D}, \quad n = 0,\ldots,N.
\end{align*} Hence, with the initialization in \eqref{eq:DF1} statement (2) follows immediately.
\end{enumerate}
\end{proof}

\subsection{Proof of Lemma \ref{lemma:v_increasing}}
\begin{proof}
This is true for $j=0$, since $\frac{dv_j}{d{V_0}^D} = \frac{d{V_0}^D}{d{V_0}^D}=1.$ Suppose that there is some $0\leq J\leq n-1$ such that $0\leq \frac{dv_j}{d{V_0}^D}\leq 1$ for all $0\leq j\leq J$, then by Lemma \ref{lemma:alternative_Vn} and the definition of $v_j$,
\begin{align}
\frac{dv_{J+1}}{d{V_0}^D} & = \frac{d}{d{V_0}^D}\left({V_0}^D + \sum_{i=0}^J\sum_{j=0}^i \frac{rp_j}{v_j({V_0}^D)} \right) \nonumber\\
& = 1-\sum_{i=0}^J\sum_{j=0}^i \frac{rp_j}{v_j^2({V_0}^D)}\frac{dv_j}{d{V_0}^D}\nonumber\\
& = 1-\sum_{i=0}^J\sum_{j=0}^i \frac{rp_j}{{({V_j}^D})^2}\frac{dv_j}{d{V_0}^D} \label{eq:derivative_v_j+1}.
\end{align} First, we show that $\frac{dv_{J+1}}{d{V_0}^D}\geq 0$. Recall that we establish in Lemma \ref{lemma:V_increasing} that the sequence of voltages ${V_j}^D, j=0,\ldots,N$ is an increasing sequence. This implies that $-\frac{1}{V_0^D}\leq -\frac{1}{V_j^D}$ for all $j=0,\ldots,N$. Using the fact that $-\frac{1}{V_0^D}\leq -\frac{1}{V_j^D}$ for all $j=0,\ldots,N$ and $\frac{dv_j}{d{V_0}^D}\leq 1$ yields,
\begin{align*}
\frac{dv_{J+1}}{d{V_0}^D} & \geq 1-\frac{1}{{V_0}^D}\sum_{i=0}^J\sum_{j=0}^i\frac{rp_j}{{V_j}^D}.
\end{align*} By \eqref{eq:alternative_second_sum} in Lemma \ref{lemma:alternative_Vn}, we have consequently
\begin{align*}
\frac{dv_{J+1}}{d{V_0}^D} & \geq 1-\frac{1}{{V_0}^D}\left({V_{J+1}}^D-{V_0}^D\right)\\
& = 2-\frac{{V_{J+1}}^D}{{V_0}^D}.
\end{align*} Combining the fact that the ${V_j}^D$ is an increasing sequence and the assumption that ${V_N}^D\leq 2{V_0}^D$, gives the inequality $-{V_{J+1}}^D \geq -2{V_0}^D$. Hence,
\begin{align*}
\frac{dv_{J+1}}{d{V_0}^D} & \geq 0.
\end{align*} Now, we show that $\frac{dv_{J+1}}{d{V_0}^D}\leq 1$. By the induction hypothesis, we have $-\frac{dv_j}{d{V_0}^D}\leq 0$ for all $j=0,\ldots,N$. Starting from Equation \ref{eq:derivative_v_j+1}, we immediately get,
\begin{align*}
\frac{dv_{J+1}}{d{V_0}^D} & = 1-\sum_{i=0}^J\sum_{j=0}^i \frac{rp_j}{{({V_j}^D})^2}\frac{dv_j}{d{V_0}^D} \leq 1.
\end{align*}
\end{proof}

\subsection{Proof of Proposition \ref{prop:equivalence}}
\begin{proof}
In order to prove the implication from left to right, we take the negation of the right-hand side of \eqref{eq:equivalence} and show that this implies the negation of the left-hand side of \eqref{eq:equivalence}. Suppose $v_N({V'_0}^D)>c$. By Lemma \ref{lemma:v_increasing}, $v_N$ is an increasing function in ${V'_0}^D$, so $c<v_N((1-\Delta)c)\leq v_N(x)$ for $x\geq (1-\Delta)c$. Hence, there exists no $x\geq (1-\Delta)c$ such that $v_N(x)=c$.

For the other implication, we first observe that $v_N$ is a continuous function, because it is a composition of continuous functions itself. To prove the implication from the right-hand side of \eqref{eq:equivalence} to the left-hand side of \eqref{eq:equivalence}, we assume that at ${V'_0}^D=(1-\Delta)c$, we have $v_N({V'_0}^D)\leq c$. By Lemma \ref{lemma:v_increasing}, we know $v_N((1-\Delta)c)\leq c$. Then, by the intermediate value theorem, we know that there exists $(1-\Delta)c\leq x\leq c$ such that $v_N(x)=c$.
\end{proof}

\section{Stability result}\label{sec:stability_result}
\subsection{Finite network: model}\label{subsection:model}
Assume that we have an arbitrary network of queues such that individual instantaneous service rates may depend on the state of the entire system. The network state is represented as a set $\mathbf{X}(t)=(X_i(t),i\in N)$ of the queue lengths $X_i$ at the network nodes $i\in N$ at time $t$. Arrivals into queue $i$ occur according to a Poisson process with a constant rate $\lambda_i$, 
independent of all the other processes. The instantaneous service rate $\mu_i$ of a node represents the intensity of the Poisson process modeling departures (service completions) of the node. In this case, the service rate allocation algorithm $\boldsymbol{\psi}(\mathbf{X}(t))$, mapping a network state $\mathbf{X}(t)$ into a set of instantaneous service rates $\boldsymbol{\mu}$, is all that is needed to specify the service allocation algorithm. 

For simplicity, in this section, we drop the dependence on time $t$ from the notation and we assume that
\begin{align}
\boldsymbol{\psi}(\boldsymbol{x})\in\argmax_{\boldsymbol{\mu}\in\mathcal{C}}\sum_{i}g(x_i)h(\mu_i),\label{eq:argmax}
\end{align} where the set $\mathcal{C}$ is compact. We impose, in addition, the following conditions:
\begin{itemize}
\item Condition ($H$): the function $h:[0,\infty)\to\mathbb{R}$ is strictly increasing, differentiable and concave (both the cases $\lim_{y\downarrow 0}h(y)=h(0)>-\infty$ and $\lim_{y\downarrow 0}h(y)=-\infty$ are allowed); and
\item Condition ($G$): the function $g:\mathbb{Z}_{+}\to[0,\infty)$ is strictly increasing and such that
\begin{align}
\frac{g(y)}{\Delta(y)}\to\infty\ \text{as}\ y\to\infty,\label{eq:g1}
\end{align} where $\Delta(y)=g(y+1)-g(y)$. Note that \eqref{eq:g1} is equivalent to
\begin{align*}
\frac{g(y+1)}{g(y)}\to 1,\ \text{as}\ y\to\infty.
\end{align*}
\end{itemize} We are going to assume that
\begin{align}
\text{there exists}\ \boldsymbol{\nu}\in\mathcal{C}\ \text{such that}\ \boldsymbol{\lambda}<\boldsymbol{\nu}.\label{existence_lambda}
\end{align} Note that stability condition \eqref{existence_lambda} is equivalent to
\begin{align}
\text{there exists}\ \boldsymbol{\nu}\in\mathcal{C}_1\ \text{such that}\ \boldsymbol{\lambda}<\boldsymbol{\nu},\label{existence_lambda_C1}
\end{align} where
\begin{align*}
\mathcal{C}_1 = \{\boldsymbol{\nu}: \exists \boldsymbol{\mu}\in\mathcal{C}\ \text{s.t.}\  \boldsymbol{\nu}\leq \boldsymbol{\mu}\}.
\end{align*} Indeed, for the implication from \eqref{existence_lambda_C1} to \eqref{existence_lambda}, if there exists $\boldsymbol{\nu}\in\mathcal{C}_1$ such that $\boldsymbol{\lambda}<\boldsymbol{\nu}$, then $\boldsymbol{\lambda}<\boldsymbol{\nu}\leq \boldsymbol{\mu}$ for $\boldsymbol{\mu}\in\mathcal{C}$, so there exists $\boldsymbol{\mu}\in\mathcal{C}$ such that $\boldsymbol{\lambda}<\boldsymbol{\mu}$. For the implication from \eqref{existence_lambda} to \eqref{existence_lambda_C1}, assume there exists $\boldsymbol{\nu}\in\mathcal{C}$ such that $\boldsymbol{\lambda}<\boldsymbol{\nu}=\boldsymbol{\nu}$, hence $\boldsymbol{\nu}\in\mathcal{C}_1$ such that $\boldsymbol{\lambda}<\boldsymbol{\nu}$.

Note that $\mathcal{C}_1$ is coordinate-convex by construction. Note also that if \begin{align*}
\sum_i g(x_i)h(\psi_i(\boldsymbol{x}))\geq \sum_i g(x_i)h(\mu_i)
\end{align*} for all $\boldsymbol{\mu}\in\mathcal{C}$, then
\begin{align*}
\sum_i g(x_i)h(\psi_i(\boldsymbol{x}))\geq \sum_i g(x_i)h(\nu_i)
\end{align*} for all $\boldsymbol{\nu}\in\mathcal{C}_1$, since the function $h$ is strictly increasing and $g$ is non-negative. Hence,
\begin{align*}
\boldsymbol{\psi}(\boldsymbol{x})\in\argmax_{\boldsymbol{\nu}\in\mathcal{C}_1}\sum_i g(x_i)h(\nu_i).
\end{align*} Then, the rest of the proof of \cite[Theorem 11]{Shneer2018} follows.

\section{Convergence of $\overline{V}_N^D(t)$ to $V(t)$ as $N\to\infty$}\label{sec:conv_VN_to_V(1)}
An important step in the proof of Theorem \ref{THM:DISTFLOW} is the approximation of the voltage $V_N^D$ by a continuous counterpart. In this section, we prove the convergence of a scaled version of the voltage $V_N^D$, i.e.\ $\overline{V}_N^D(t)$, as introduced in Equation \eqref{eq:scaled_Vn}, to the continuous function $V(t)$, as introduced in Equation \eqref{eq:integral_equation_V}, as the network size $N$ grows to infinity (cf. Proposition \ref{THM:CONVERGENCE}) and present its numerical validation.

\subsection{Proof of Proposition \ref{THM:CONVERGENCE}}
The proof consists of several steps that we discussed globally in Section \ref{subsec:approximation_continuous counterpart} already, and for which we now present the details.

In Section \ref{subsubsec:derivation_of_scaled_version}, we rewrite the scaled version of the voltage $\overline{V}_N^D(t)$, in the same form as the continuous function $V(t)$. We show that the continuous extension $V_N^D(t)$ (as in Equation \eqref{eq:continuous_extension_VD}) of the discrete voltages $V_n^D$ is such that $V_n^D = V_N^D(n)$ for all $n=0,1,\ldots,N$. Then, we scale the continuous extension $V_N^D(t)$ by $N$, such that $\overline{V}_N^D(t) = V_N^D(Nt)$ (as in Equation \eqref{eq:scaled_Vn}) and show that this scaled version $\overline{V}_N^D(t)$ can be written as $\overline{V}_N^D(t) = 1+a\int_0^t\int_0^s \frac{1}{\overline{V}_N^D(u)}du\ ds + \overline{R}_N(t)$ (as in Equation \eqref{eq:mappingH}).

Then, for convenience, we introduce the map $H$ (cf. Definition \ref{def:mappingH}), that allows us to write,
\begin{align}
\overline{V}_N^D(t)-V(t) = \overline{R}_N(t) + (H\overline{V}_N^D)(t)-(HV)(t).\label{eq:ideas_proof_conv}
\end{align} The goal is to show that the supremum over the interval $[0,1]$ of the absolute difference between $\overline{V}_N^D(t)$ and $V(t)$ goes to zero as the size of the network $N$ goes to infinity. Having \eqref{eq:ideas_proof_conv} in mind, we consider the remainder term $\overline{R}_N(t)$ and the term $(H\overline{V}_N^D)(t)-(HV)(t)$ of the right-hand side of \eqref{eq:ideas_proof_conv} separately.

In Section \ref{subsubsec:convergence_of_remainder}, we show that this remainder term $\overline{R}_N(t)$ vanishes as $N\to\infty$ for every $t$ in the interval $[0,1]$ and in Section \ref{subsubsec:contraction}, we show that the map $H$ is a contractive mapping on the interval $[0,h]$, where $h<1$. This means,
\begin{align*}
\sup_{0\leq t\leq h} |(H\overline{V}_n^D)(t)-(HV)(t)|\leq \kappa(h)\sup_{0\leq t\leq h} |V(t)-\overline{V}_N^D(t)|
\end{align*} for $\kappa(h)<1$. Combining the results in Sections \ref{subsubsec:convergence_of_remainder} and \ref{subsubsec:contraction}, we then find that
\begin{align*}
\sup_{0\leq t\leq h} |\overline{V}_N^D(t)-V(t)| \to 0\ \text{as}\ N\to\infty.
\end{align*}

In Section \ref{subsubsec:conv_on_diff_intervals}, we show by induction that the convergence of the supremum of the absolute difference between $\overline{V}_N^D(t)$ and $V(t)$ on the interval $[0,h]$ can be extended to the interval $[0,1]$, as desired in Proposition \ref{THM:CONVERGENCE}.

\subsubsection{Derivation of $\overline{V}_N^D(t)$ as in Equation \eqref{eq:mappingH}}\label{subsubsec:derivation_of_scaled_version}
In this section, we first derive an expression for the voltages under the Distflow model as a double summation, instead of the voltages given by Equations \eqref{eq:DF1_scaled_new} and \eqref{eq:DF2_scaled_new}. Then, we show, in Lemma \ref{lemma:integral_representation}, that the function $V_N^D(t)$ (as in Equation \eqref{eq:scaled_Vn}) is a continuous extension of the discrete voltages $V_n^D$, using the alternative expression for the voltages from Lemma \ref{lemma:alternative_Vn}. 

For convenience, we state the alternative expression for the voltages under the Distflow model from Lemma \ref{lemma:alternative_Vn}; i.e., for $n=0,\ldots,N$,
\begin{align}
V_{n}^D = 1+k_N\sum_{i=0}^{n-1}\sum_{j=0}^i \frac{1}{V_j^D},\label{eq:alternative_Vn_seq}
\end{align} with the convention $\sum_{i=0}^{-1}\cdot = 0$ and $k_N = r\lambda_N^D$. Again, observe that the alternative expression in \eqref{eq:alternative_Vn_seq}, is only defined for integer values. In the following lemma, we define a continuous extension of the alternative expression for the voltages under the Distflow model.
\begin{lemma}[Continuous extension]\label{lemma:integral_representation}
The sequence $V_n^D, n=0,1,\ldots,N$ defined in \eqref{eq:alternative_Vn_seq} can be represented by
\begin{align}
V_N^D(t) & := 1+k_N \int_0^{\floor{t}}\int_0^{\ceil{s}}\frac{1}{V_N^D(u)}du\ ds,\ t\in[0,N];\label{eq:V_n(t+1)}
\end{align} i.e., for all integer values $n=\floor{t}$, $V_N^D(n) = V_n^D$.
\end{lemma}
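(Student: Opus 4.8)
The plan is to exploit the fact that, because of the outer floor $\floor{t}$ in \eqref{eq:V_n(t+1)}, the function $V_N^D(\cdot)$ is piecewise constant: for $t\in[n,n+1)$ the upper limit of the outer integral is frozen at $n$, so $V_N^D(t)=V_N^D(n)$ on that whole interval. In particular \eqref{eq:V_n(t+1)} is really a recursive prescription — the value of $V_N^D$ on $[n,n+1]$ is determined by its restriction to $[0,n]$, since the as-yet-unknown value $V_N^D(n)$ enters the defining integral only on a set of Lebesgue measure zero — so $V_N^D(\cdot)$ is well defined, and it suffices to identify its values at the integers.

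First I would unfold the double integral at an integer argument $t=n$. Splitting $[0,n]=\bigcup_{m=0}^{n-1}[m,m+1]$ and using $\ceil{s}=m+1$ for $s\in(m,m+1)$, together with the fact that the inner integrand does not depend on $s$, one gets
\begin{align*}
\int_0^{n}\int_0^{\ceil{s}}\frac{1}{V_N^D(u)}\,du\,ds=\sum_{m=0}^{n-1}\int_0^{m+1}\frac{1}{V_N^D(u)}\,du .
\end{align*}
Then, splitting each $[0,m+1]=\bigcup_{j=0}^{m}[j,j+1]$ and using the piecewise-constancy $V_N^D(u)=V_N^D(j)$ for $u\in[j,j+1)$, the inner integral collapses to $\sum_{j=0}^{m}1/V_N^D(j)$, so that
\begin{align*}
V_N^D(n)=1+k_N\sum_{m=0}^{n-1}\sum_{j=0}^{m}\frac{1}{V_N^D(j)} .
\end{align*}

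Finally I would compare this with the double-sum formula \eqref{eq:alternative_Vn_seq} and conclude by induction on $n$ that $V_N^D(n)=V_n^D$. The base case $n=0$ is immediate: both sides equal $1$ (empty sum). For the inductive step, assuming $V_N^D(j)=V_j^D$ for all $j\le n-1$, substitution into the displayed identity gives $V_N^D(n)=1+k_N\sum_{m=0}^{n-1}\sum_{j=0}^{m}1/V_j^D$, which is exactly $V_n^D$ by \eqref{eq:alternative_Vn_seq}. Hence $V_N^D(\floor{t})=V_{\floor{t}}^D$ for all $t\in[0,N]$, as claimed.

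I expect the only genuinely delicate point to be the careful bookkeeping of the floor/ceiling limits together with the observation that the self-reference in \eqref{eq:V_n(t+1)} is harmless — the unknown value $V_N^D(n)$ affects the right-hand side only on a null set, which is what legitimises both the recursion and the induction. Everything else is a routine interchange of finite sums with integrals over unit intervals.
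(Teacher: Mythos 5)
Your proof is correct and follows essentially the same route as the paper: both arguments rest on the piecewise constancy forced by the floor/ceiling, collapse the double integral over unit intervals into the double sum, and conclude by induction via the alternative representation of $V_n^D$. The only cosmetic difference is that you unfold the full double sum at once and match it against \eqref{eq:alternative_Vn_seq}, whereas the paper inducts on the increment $V_N^D(m+1)-V_N^D(m)$ using \eqref{eq:alternative_first_sum}; your explicit remark that the self-reference in \eqref{eq:V_n(t+1)} only touches a null set is a point the paper leaves implicit.
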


\begin{proof} Let $t\in[0,N]$. Due to the definition of $V_N^D(t)$ in \eqref{eq:V_n(t+1)} we have
\begin{align}
V_N^D(t) & = 1+k_N \int_0^{\floor{t}}\int_0^{\ceil{s}}\frac{1}{V_N^D(u)}du\ ds
= V_N^D(\floor{t}). \label{eq:floor_is_integer}
\end{align} Thus, for all integer values $n = \floor{t}$, we have the equality
\begin{align*}
V_N^D(n) & = 1+k_N \int_0^{n}\int_0^{\ceil{s}}\frac{1}{V_N^D(u)}du\ ds.
\end{align*} We show $V_N^D(n)=V_n^D$ for all $n=0,\ldots,N$ by strong induction. Note that, for $n=0$, we have
\begin{align*}
V_N^D(0) = 1 = V_0^D
\end{align*} by definition. Now, assume that,
\begin{align}
V_N^D(n)=1+k_N\int_0^n\int_0^{\ceil{s}}\frac{1}{V_N^D(u)}du\ ds = V_n^D,\label{eq:induction_step}
\end{align} for $0\leq n\leq m$. Then, we need to show that $V_N^D(m+1)=V_{m+1}^D$. By definition and linearity of the integral, we have
\begin{align*}
V_N^D(m+1) & = 1+k_N\int_0^{m+1}\int_0^{\ceil{s}}\frac{1}{V_N^D(u)}du\ ds \\
& = 1+k_N\int_0^{m}\int_0^{\ceil{s}}\frac{1}{V_N^D(u)}du\ ds + k_N\int_m^{m+1}\int_0^{\ceil{s}}\frac{1}{V_N^D(u)}du\ ds.
\end{align*} By the induction hypothesis in \eqref{eq:induction_step}, we have
\begin{align}
V_N^D(m+1) = V_m^D + k_N\int_m^{m+1}\int_0^{\ceil{s}}\frac{1}{V_N^D(u)}du\ ds.\label{eq:V_n(t+1)_in_n+1}
\end{align} For $s\in[m,m+1]$, the inner integral in \eqref{eq:V_n(t+1)_in_n+1} simplifies to
\begin{align*}
\int_0^{m+1}\frac{1}{V_N^D(u)}du & = \int_0^1 \frac{1}{V_N^D(u)}du + \int_1^2 \frac{1}{V_N^D(u)}du + \cdots + \int_{m}^{m+1} \frac{1}{V_N^D(u)}du.
\end{align*} Now, again by the induction hypothesis in \eqref{eq:induction_step} and \eqref{eq:floor_is_integer}, we have
\begin{align}
\int_0^{m+1}\frac{1}{V_N^D(u)}du = \frac{1}{V_0^D} +\frac{1}{V_1^D}+\cdots+\frac{1}{V_m^D} = \sum_{j=0}^m \frac{1}{V_j^D}.\label{eq:integral_to_sum}
\end{align} Continuing from \eqref{eq:V_n(t+1)_in_n+1}, and using the result in \eqref{eq:integral_to_sum}, leads to
\begin{align*}
V_N^D(m+1) & = V_m^D + k_N\int_m^{m+1} \sum_{j=0}^m \frac{1}{V_j^D} ds \\
& = V_m^D + k_N\sum_{j=0}^m \frac{1}{V_j^D}.
\end{align*} Here, we recognize Equation \eqref{eq:alternative_first_sum} from Lemma \ref{lemma:alternative_Vn}. Hence,
\begin{align*}
V_N^D(m+1) = V_{m+1}^D.
\end{align*} This concludes the proof. 
\end{proof}



So far, we showed that the function ${V}_N^D(t)$ is a continuous extension of the discrete voltages $V_n^D$. Now, we scale the continuous extension $V_N^D(t)$, by $\overline{V}_N^D(t) = V_N^D(Nt)$ (cf. Definition \ref{def:scale}), into an integral equation that, as we show later, differs from the integral equation in \eqref{eq:integral_equation_V} by terms that vanish as $N\to\infty$. We have, for $t\in[0,1]$ and $k_N = \frac{a}{N^2}$, that
\begin{align}
\overline{V}_N^D(t+\frac{1}{N}) & =  1+\frac{a}{N^2}\int_0^{\floor{Nt}+1}\int_0^{\ceil{s}}\frac{1}{V_N^D(u)}du\ ds\nonumber\\
& = 1+\frac{a}{N^2}\int_0^{\floor{Nt}+1}\int_0^{\ceil{s}}\frac{1}{\overline{V}_N^D\left(\frac{u}{N}\right)}du\ ds.\label{eq:change_variables_V_n}
\end{align} Substituting $s=Nx$ and $u=Ny$ in the integral in \eqref{eq:change_variables_V_n}, we get
\begin{align}
\overline{V}_N^D(t+\frac{1}{N})&=1+\frac{a}{N^2} N^2\int_{0}^{\frac{\floor{Nt}+1}{N}}\int_0^{\frac{\ceil{Nx}}{N}}\frac{1}{\overline{V}_N^D(y)}dy\ dx\nonumber\\
& = 1+a\int_{0}^{t}\int_0^{\frac{\ceil{Nx}}{N}}\frac{1}{\overline{V}_N^D(y)}dy\ dx + a\int_{t}^{\frac{\floor{Nt}+1}{N}}\int_0^{\frac{\ceil{Nx}}{N}}\frac{1}{\overline{V}_N^D(y)}dy\ dx.\label{eq:change_var_V_n}
\end{align} Finally, we rewrite \eqref{eq:change_var_V_n} as
\begin{align}
\overline{V}_N^D(t+\frac{1}{N}) & = 1+a\int_{0}^{t}\left(\int_0^{x}\frac{1}{\overline{V}_N^D(y)}dy  + \int_x^{\frac{\ceil{Nx}}{N}}\frac{1}{\overline{V}_N^D(y)}dy \right) dx +\nonumber \\
& \quad + a\int_{t}^{\frac{\floor{Nt}+1}{N}}\int_0^{\frac{\ceil{Nx}}{N}}\frac{1}{\overline{V}_N^D(y)}dy\ dx \nonumber\\
& = 1+a\int_{0}^{t}\int_0^{x}\frac{1}{\overline{V}_N^D(y)}dy\ dx + \int_0^t\int_x^{\frac{\ceil{Nx}}{N}}\frac{1}{\overline{V}_N^D(y)}dy\ dx + \nonumber\\
& \quad + a\int_{t}^{\frac{\floor{Nt}+1}{N}}\int_0^{\frac{\ceil{Nx}}{N}}\frac{1}{\overline{V}_N^D(y)}dy\ dx \nonumber\\
& = 1+a\int_{0}^{t}\int_0^{x}\frac{1}{\overline{V}_N^D(y)}dy\ dx + R_N(t)\label{eq:dynamics_Vn},
\end{align} where
\begin{align}
R_N(t):= \int_0^t\int_x^{\frac{\ceil{Nx}}{N}}\frac{1}{\overline{V}_N^D(y)}dy\ dx + a\int_{t}^{\frac{\floor{Nt}+1}{N}}\int_0^{\frac{\ceil{Nx}}{N}}\frac{1}{\overline{V}_N^D(y)}dy\ dx.\label{eq:R_N(t)}
\end{align}
By adding $\overline{V}_N^D(t)$ and subtracting $\overline{V}_N^D(t+\frac{1}{N})$ on both sides of \eqref{eq:dynamics_Vn}, we get
\begin{align}
\overline{V}_N^D(t) & = 1+a\int_0^t\int_0^x \frac{1}{\overline{V}_N^D(y)}dy\ dx + \overline{V}_N^D(t)-\overline{V}_N^D(t+\frac{1}{N})+ R_N(t).\label{eq:dynamics_Vn_1}
\end{align} Observe that the representation in \eqref{eq:dynamics_Vn_1} is equal to the one in \eqref{eq:mappingH}, as desired, if we define
\begin{align}
\overline{R}_N(t) := \overline{V}_N^D(t)-\overline{V}_N^D(t+\frac{1}{N})+R_N(t).\label{eq:overline_R_n}
\end{align}
For convenience, we introduce an operator $H$. This is useful for the proofs in Section \ref{subsubsec:contraction}.
\begin{definition}\label{def:mappingH}
Let $H:\mathbf{D}_{\geq 1}[0,1]\to\mathbf{D}_{\geq 1}[0,1]$ such that
\begin{align*}
(Hf)(t) = 1+a\int_0^t\int_0^s\frac{1}{f(u)}du\ ds,\ f\in \mathbf{D}_{\geq 1}[0,1].
\end{align*}
\end{definition}
This allows us to write the difference between $\overline{V}_N^D(t)$ and $V(t)$ as,
\begin{align*}
\overline{V}_N^D(t)-V(t) = \overline{R}_N(t) + (H\overline{V}_N^D)(t)-(HV)(t).
\end{align*}
So far, we reached Equation \eqref{eq:ideas_proof_conv} and made all the necessary preparation work to consider the remainder term $\overline{R}_N(t)$ and the term $(H\overline{V}_N^D)(t)-(HV)(t)$ of the right-hand side of \eqref{eq:ideas_proof_conv} in the next two sections, separately.

\subsubsection{Convergence of supremum of remainder term to zero}\label{subsubsec:convergence_of_remainder}
In this section, we show that the remainder term $\overline{R}_N(t)$ in \eqref{eq:overline_R_n} vanishes as $N\to\infty$, i.e., we show convergence of $\Sup{t\in[0,1]}|\overline{R}_N(t)|$ to zero as $N\to\infty$. Therefore, by \eqref{eq:overline_R_n}, we show the convergence of $\Sup{t\in[0,1]}|\overline{V}_N^D(t)-\overline{V}_N^D(t+\frac{1}{N})|\to 0$ as $N\to\infty$ in Lemma \ref{lemma:Vn-Vn+1/t} and $\Sup{t\in[0,1]}|{R}_N(t)|\to 0$ as $N\to\infty$ in Lemma \ref{lemma:Rn}. Then, as an immediate consequence of Lemmas \ref{lemma:Vn-Vn+1/t} and \ref{lemma:Rn}, we show $\Sup{t\in[0,1]}|\overline{R}_N(t)|\to 0$ as $N\to\infty$.

We begin with the convergence of $\Sup{t\in[0,1]}|\overline{V}_N^D(t)-\overline{V}_N^D(t+\frac{1}{N})|\to 0$ as $N\to\infty$ in Lemma \ref{lemma:Vn-Vn+1/t}.
\begin{lemma}\label{lemma:Vn-Vn+1/t}
Let $\overline{V}_N^D(t)$ as defined in \eqref{eq:mappingH}, then
\begin{align*}
\sup_{t\in[0,1]} \left|\overline{V}^D_N(t)-\overline{V}_N^D(t+\frac{1}{N})\right| \to 0\ \text{as}\ N\to\infty.
\end{align*}
\end{lemma}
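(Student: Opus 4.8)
The plan is to use the fact, established in Lemma~\ref{lemma:integral_representation}, that the continuous extension $V_N^D(\cdot)$ is constant on each unit interval, so that the rescaled function $\overline{V}_N^D$ takes only the values $V_0^D,\dots,V_N^D$ and the supremum in the statement collapses to the largest one-step increment of the discrete voltage sequence, which we then bound via Lemma~\ref{lemma:alternative_Vn}.

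First I would record that, by \eqref{eq:floor_is_integer}, $V_N^D(s)=V_N^D(\floor{s})=V_{\floor{s}}^D$ for $s\in[0,N]$, and that, by Definition~\ref{def:scale}, $\overline{V}_N^D(t)=V_N^D(Nt)$. Hence, setting $n:=\floor{Nt}\in\{0,1,\dots,N-1\}$ for $t\in[0,1)$, we get $\overline{V}_N^D(t)=V_n^D$ and $\overline{V}_N^D(t+\tfrac1N)=V_N^D(Nt+1)=V_{n+1}^D$. (The only value of $t$ for which $Nt+1$ falls outside $[0,N]$ is $t=1$; since $\overline{V}_N^D$ is piecewise constant this single point does not enlarge the supremum, so it is harmless.) Because the sequence $(V_j^D)_j$ is non-decreasing by Lemma~\ref{lemma:V_increasing}, it follows that
\begin{align*}
\sup_{t\in[0,1]}\left|\overline{V}_N^D(t)-\overline{V}_N^D\!\left(t+\tfrac1N\right)\right|=\max_{0\le n\le N-1}\left(V_{n+1}^D-V_n^D\right).
\end{align*}

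It then remains to estimate a single increment. By \eqref{eq:alternative_first_sum} in Lemma~\ref{lemma:alternative_Vn}, applied with the homogeneous allocation $p_i\equiv\lambda_N^D$ and $k_N=r\lambda_N^D=a/N^2$ (cf.\ \eqref{eq:k_N}), and using $V_i^D\ge V_0^D=1$ (Lemma~\ref{lemma:V_increasing}),
\begin{align*}
V_{n+1}^D-V_n^D=\sum_{i=0}^{n}\frac{k_N}{V_i^D}\le k_N(n+1)\le k_N N=\frac{a}{N},\qquad 0\le n\le N-1.
\end{align*}
Combining the two displays gives $\sup_{t\in[0,1]}\bigl|\overline{V}_N^D(t)-\overline{V}_N^D(t+\tfrac1N)\bigr|\le a/N\to 0$ as $N\to\infty$.

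Since everything reduces to a direct computation, there is no real obstacle here; the only point requiring mild care is the bookkeeping with $\floor{\cdot}$ and $\ceil{\cdot}$ and the behaviour of the shift $t\mapsto t+1/N$ at the right endpoint, which is innocuous thanks to the piecewise-constant structure of $\overline{V}_N^D$.
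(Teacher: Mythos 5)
Your proof is correct and is essentially the paper's argument in discrete form: the paper bounds the same one-step difference by identifying it with a single slice of the double integral and using $\overline{V}_N^D(y)\ge 1$ to obtain $a(N+1)/N^2$, which is exactly your bound $\sum_{i=0}^{n} k_N/V_i^D \le k_N(n+1)$ after undoing the change of variables. The one point needing care --- the meaning of $\overline{V}_N^D(1+\tfrac{1}{N})$ at the right endpoint --- is glossed over in the paper as well, and the extra increment $V_{N+1}^D-V_N^D$ obeys the same estimate, so nothing is lost.
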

\begin{proof} By definition of \eqref{eq:mappingH}, we have
\begin{multline}
\overline{V}_N^D(t)-\overline{V}_N^D(t+\frac{1}{N}) = a\bigg(\int_0^{\frac{\floor{Nt-1}+1}{N}}\int_0^{\frac{\ceil{Nx}}{N}}\frac{1}{\overline{V}_N^D(y)}dy\ dx-\\ -\int_0^{\frac{\floor{Nt}+1}{N}}\int_0^{\frac{\ceil{Nx}}{N}}\frac{1}{\overline{V}_N^D(y)}dy\ dx \bigg).\label{eq:first_remainder_term}
\end{multline} However, \eqref{eq:first_remainder_term} simplifies to,
\begin{align}
\overline{V}_N^D(t)-\overline{V}_N^D(t+\frac{1}{N}) = a\left(-\int_{\frac{\floor{Nt}}{N}}^{\frac{\floor{Nt}+1}{N}}\int_0^{\frac{\ceil{Nx}}{N}} \frac{1}{\overline{V}_N^D(y)}dy\ dx \right).\label{eq:first_remainder_term1}
\end{align} In what follows, we bound the absolute value of the right-hand side of \eqref{eq:first_remainder_term1} in terms of $N$. Since $\overline{V}_N^D(t)\in \mathbf{D}_{\geq 1}[0,1]$, we have, for all $t\in[0,1]$ that $\overline{V}_N^D(t)\geq 1$. Applying this bound, yields,
\begin{align*}
|\overline{V}_N^D(t)-\overline{V}_N^D(t+\frac{1}{N})| & \leq a\left(\int_{\frac{\floor{Nt}}{N}}^{\frac{\floor{Nt}+1}{N}}\int_0^{\frac{\ceil{Nx}}{N}} 1 dy\ dx \right)\\
& = a\left(\int_{\frac{\floor{Nt}}{N}}^{\frac{\floor{Nt}+1}{N}} \frac{\floor{Nt}+1}{N}dx \right).
\end{align*} Observe that, $\frac{\floor{Nt}+1}{N}-\frac{\floor{Nt}}{N}=\frac{1}{N}$. 
Thus, for $t\in[0,1]$, we get
\begin{align*}
|\overline{V}_N^D(t)-\overline{V}_N^D(t+\frac{1}{N})| & \leq a\left(\frac{N+1}{N^2} \right).
\end{align*} As a consequence, we get the desired result,
\begin{align*}
\sup_{t\in[0,1]} |\overline{V}_N^D(t)-\overline{V}_N^D(t+\frac{1}{N})| \to 0\ \text{as}\ N\to\infty.
\end{align*}
\end{proof}
Now, it remains to be shown that $\Sup{t\in[0,1]}|{R}_N(t)|\to 0$ as $N\to\infty$. This is done in Lemma \ref{lemma:Rn}.
\begin{lemma}\label{lemma:Rn} Let $R_N(t)$ be defined as in \eqref{eq:R_N(t)}, then
\begin{align*}
\sup_{t\in[0,1]} |R_N(t)| \to 0\ \text{as}\ N\to\infty.
\end{align*}
\end{lemma}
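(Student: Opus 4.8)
The plan is to estimate $|R_N(t)|$ by the crude bound $1/\overline{V}_N^D(y)\leq 1$ and to observe that both integrals in \eqref{eq:R_N(t)} are taken over domains of Lebesgue measure $O(1/N)$, so that the whole remainder is $O(1/N)$ uniformly in $t$. The lower bound $\overline{V}_N^D(y)\geq 1$ is available because, by Lemma~\ref{lemma:V_increasing} together with \eqref{eq:floor_is_integer}, the continuous extension $V_N^D(\cdot)$ is a step function taking only the values $V_n^D\geq V_0^D=1$; hence $\overline{V}_N^D\in\mathbf{D}_{\geq 1}[0,1]$ and $1/\overline{V}_N^D\leq 1$ wherever it occurs in \eqref{eq:R_N(t)}.

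First I would handle the inner integral in the first summand: since $x\leq \ceil{Nx}/N\leq x+1/N$, we get $\int_x^{\ceil{Nx}/N}\frac{dy}{\overline{V}_N^D(y)}\leq \frac{\ceil{Nx}}{N}-x\leq \frac{1}{N}$, and integrating over $x\in[0,t]\subseteq[0,1]$ bounds the first summand by $t/N\leq 1/N$. For the second summand, I would bound the inner integral by $\int_0^{\ceil{Nx}/N}\frac{dy}{\overline{V}_N^D(y)}\leq \frac{\ceil{Nx}}{N}$, which for $t\in[0,1]$ and $x\leq \frac{\floor{Nt}+1}{N}\leq t+\frac{1}{N}\leq 1+\frac{1}{N}$ is at most a constant (at most $2$ for $N\geq 1$); meanwhile the outer interval $[t,\frac{\floor{Nt}+1}{N}]$ is nonempty, since $\floor{Nt}+1\geq Nt$, and has length $\frac{\floor{Nt}+1}{N}-t\leq \frac{1}{N}$. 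Hence the second summand is at most $2a/N$, uniformly in $t\in[0,1]$.

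Combining the two estimates gives $\sup_{0\leq t\leq 1}|R_N(t)|\leq (1+2a)/N\to 0$ as $N\to\infty$, which is the claim. I do not expect any genuine obstacle here; the only thing to be careful about is keeping track of the ranges of the floor and ceiling functions and of the relevant endpoints, so as to confirm that each integration interval shrinks at rate $1/N$ while the integrands remain bounded.
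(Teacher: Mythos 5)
Your proposal is correct and follows essentially the same route as the paper: both bound $1/\overline{V}_N^D$ by $1$ (using $\overline{V}_N^D\in\mathbf{D}_{\geq 1}[0,1]$) and then observe that each integration domain has measure at most $1/N$ while the integrands stay bounded, yielding a uniform $O(1/N)$ bound. The only differences are cosmetic: you track the constant in the second summand slightly more carefully (obtaining $2a/N$ where the paper writes $a/N$), which does not change the conclusion.
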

\begin{proof}
Since $\overline{V}_N^D(t)\in \mathbf{D}_{\geq 1}[0,1]$, we have, for all $t\in[0,1]$ that $\overline{V}_N^D(t)\geq 1$, we get from \eqref{eq:R_N(t)}, that
\begin{align}
|R_N(t)| & \leq a\int_0^t\int_x^{\frac{\ceil{Nx}}{N}}1 dy\ dx + a\int_t^{\frac{\floor{Nt}+1}{N}}\int_0^{\frac{\ceil{Nx}}{N}}1 dy\ dx \nonumber\\
& = a\int_0^t\int_x^{\frac{\ceil{Nx}}{N}}1 dy\ dx + a\int_t^{\frac{\floor{Nt}+1}{N}} \frac{\ceil{Nx}}{N}\ dx.\label{eq:Rn_ineq}
\end{align} Observe that, for the first integral in \eqref{eq:Rn_ineq}, that $\frac{\ceil{Nx}}{N}-x \leq \frac{Nx+1}{N}-x = \frac{1}{N}$, and for the second integral, that $\frac{\floor{Nt}+1}{N}-t\leq \frac{Nt+1}{N}-t = \frac{1}{N}$. 
Thus, for $t\in[0,1]$,
\begin{align*}
|R_N(t)| &\leq a\int_0^t \frac{1}{N}dx + a\int_t^{\frac{\floor{Nt}+1}{n}}\frac{\ceil{Nt}}{N}dx \nonumber\\
& = \frac{a}{N}+\frac{a}{N} = \frac{2a}{N}.
\end{align*} Hence,
\begin{align*}
\sup_{t\in[0,1]}|R_N(t)| \to 0\ \text{as}\ N\to\infty.
\end{align*}
\end{proof}
The convergence of the supremum of the remainder term $|\overline{R}_N(t)|$ follows immediately from Lemmas \ref{lemma:Vn-Vn+1/t} and \ref{lemma:Rn} and is summarized in the following corollary.
\begin{corollary}\label{cor:overline_Rn}
Let $\overline{R}_N(t)$ be defined as in \eqref{eq:overline_R_n}, then
\begin{align*}
\sup_{t\in[0,1]}|\overline{R}_N(t)|\to 0\ \text{as}\ N\to\infty.
\end{align*}
\end{corollary}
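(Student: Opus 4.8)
The plan is to obtain the corollary directly from the two preceding lemmas by an elementary triangle-inequality argument, since the quantitative work has already been carried out there. Recall from \eqref{eq:overline_R_n} that
\begin{align*}
\overline{R}_N(t) = \overline{V}_N^D(t)-\overline{V}_N^D\left(t+\frac{1}{N}\right)+R_N(t),
\end{align*}
so for every $t\in[0,1]$ and every $N$ the triangle inequality gives
\begin{align*}
|\overline{R}_N(t)| \leq \left|\overline{V}_N^D(t)-\overline{V}_N^D\left(t+\frac{1}{N}\right)\right| + |R_N(t)|.
\end{align*}

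First I would take the supremum over $t\in[0,1]$ on both sides and use that the supremum of a sum is at most the sum of the suprema, obtaining
\begin{align*}
\sup_{t\in[0,1]}|\overline{R}_N(t)| \leq \sup_{t\in[0,1]}\left|\overline{V}_N^D(t)-\overline{V}_N^D\left(t+\frac{1}{N}\right)\right| + \sup_{t\in[0,1]}|R_N(t)|.
\end{align*}
Next I would invoke Lemma \ref{lemma:Vn-Vn+1/t} to send the first term on the right to $0$ as $N\to\infty$, and Lemma \ref{lemma:Rn} to send the second term to $0$ as $N\to\infty$. Since a finite sum of sequences that each tend to $0$ also tends to $0$, the left-hand side tends to $0$, which is exactly the assertion of the corollary.

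I do not expect any genuine obstacle: the corollary is a bookkeeping consequence of Lemmas \ref{lemma:Vn-Vn+1/t} and \ref{lemma:Rn}, in which the real estimates (the explicit bounds $a(N+1)/N^2$ and $2a/N$, respectively) were established. The only minor point worth a remark is that $\overline{V}_N^D$ was introduced on $[0,1]$ while the shifted argument $t+\frac{1}{N}$ may exceed $1$; one uses here the underlying continuous extension $V_N^D(\cdot)$ on $[0,N]$ from \eqref{eq:V_n(t+1)} (equivalently the representation \eqref{eq:mappingH}) to make sense of $\overline{V}_N^D(t+\frac{1}{N})$, exactly as was already done in the statement and proof of Lemma \ref{lemma:Vn-Vn+1/t}.
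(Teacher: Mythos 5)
Your proposal is correct and matches the paper's approach: the paper states the corollary as an immediate consequence of Lemmas \ref{lemma:Vn-Vn+1/t} and \ref{lemma:Rn} via exactly this triangle-inequality bookkeeping. Your added remark about interpreting $\overline{V}_N^D(t+\frac{1}{N})$ through the underlying extension on $[0,N]$ is a sensible clarification but does not change the argument.
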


\subsubsection{Contractive property of map $H$}\label{subsubsec:contraction}
In this section, we show that $H:\mathbf{D}_{\geq 1}[0,h]\to \mathbf{D}_{\geq 1}[0,h]$ is a contractive mapping for a suitably chosen $h<1$. This allows us to show that $\Sup{0\leq t\leq h}|\overline{V}_N^D(t)-V(t)|\to 0$ as $N\to\infty$.



In Lemma \ref{lemma:contraction_H}, we show that $H$ is a contraction on $\mathbf{D}_{\geq 1}[0,h]$ for a suitably chosen $h<1$.
\begin{lemma}\label{lemma:contraction_H}
Let $H$ as in Definition \ref{def:mappingH}. There exists $0\leq h<1$, such that $H$ is a contractive mapping on $\mathbf{D}_{\geq 1}[0,h]$, i.e.,
\begin{align*}
\sup_{0\leq t\leq h} |(H\overline{V}_n^D)(t)-(HV)(t)|\leq \kappa(h)\sup_{0\leq t\leq h} |\overline{V}_N^D(t)-V(t)|
\end{align*} with $\kappa(h)<1$.
\end{lemma}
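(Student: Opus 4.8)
The idea is to bound $(Hf)(t)-(Hg)(t)$ directly from the definition of $H$, using that every function in $\mathbf{D}_{\geq 1}[0,h]$ is bounded below by $1$ (so the map $x\mapsto 1/x$ is $1$-Lipschitz on the relevant range) and that double integration of a constant over $[0,h]$ produces the small factor $h^2/2$. First I would record that $H$ indeed sends $\mathbf{D}_{\geq 1}[0,h]$ into itself: for $f\in\mathbf{D}_{\geq 1}[0,h]$ the integrand $1/f(u)$ is nonnegative and $\leq 1$, so $(Hf)(t)=1+a\int_0^t\int_0^s \tfrac{1}{f(u)}\,du\,ds\geq 1$, and $(Hf)$ is continuous (in fact $C^2$) in $t$, hence c\`adl\`ag. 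In particular both $\overline{V}_N^D$ and $V$ lie in $\mathbf{D}_{\geq 1}[0,h]$: for $V$ this is immediate from \eqref{eq:integral_equation_V}, and for $\overline{V}_N^D$ it follows from \eqref{eq:scaled_Vn} and \eqref{eq:continuous_extension_VD}, which exhibit $\overline{V}_N^D$ as $1$ plus a nonnegative quantity. Thus the right-hand side of the claimed inequality is well defined.

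Next, for any $f,g\in\mathbf{D}_{\geq 1}[0,h]$ and $t\in[0,h]$, by Definition \ref{def:mappingH},
\begin{align*}
(Hf)(t)-(Hg)(t) = a\int_0^t\int_0^s\Big(\frac{1}{f(u)}-\frac{1}{g(u)}\Big)\,du\,ds,
\end{align*}
and since $f(u),g(u)\geq 1$,
\begin{align*}
\Big|\frac{1}{f(u)}-\frac{1}{g(u)}\Big| = \frac{|f(u)-g(u)|}{f(u)g(u)} \leq |f(u)-g(u)| \leq \sup_{0\leq u\leq h}|f(u)-g(u)|.
\end{align*}
Substituting this bound and integrating gives, for every $t\in[0,h]$,
\begin{align*}
|(Hf)(t)-(Hg)(t)| \leq a\Big(\sup_{0\leq u\leq h}|f(u)-g(u)|\Big)\int_0^t\int_0^s du\,ds = \frac{at^2}{2}\sup_{0\leq u\leq h}|f(u)-g(u)| \leq \frac{ah^2}{2}\sup_{0\leq u\leq h}|f(u)-g(u)|.
\end{align*}
Taking the supremum over $t\in[0,h]$ and setting $\kappa(h):=ah^2/2$ yields the desired estimate. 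To finish, I would choose $h$ with $0<h<\min\{1,\sqrt{2/a}\}$; such an $h$ exists for every finite $a>0$, and for it one has simultaneously $h<1$ and $\kappa(h)<1$. Specializing $f=\overline{V}_N^D$ and $g=V$ gives the statement of Lemma \ref{lemma:contraction_H}.

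\textbf{Main obstacle.} There is no deep difficulty: the whole estimate is a one-line Gr\"onwall-type bound, made possible by the uniform lower bound $f,g\geq 1$ (which eliminates the denominator $f(u)g(u)$) together with the $h^2/2$ gain from double integration. The only point needing a little attention is the interface with the remainder of the argument: the admissible $h$ produced here may be much smaller than $1$, so one must make sure that the induction in Section \ref{subsubsec:conv_on_diff_intervals} relies only on $h$ being a fixed constant in $(0,1)$, not on $h$ being close to $1$; combined with Corollary \ref{cor:overline_Rn} and the identity \eqref{eq:ideas_proof_conv}, this contraction on $[0,h]$ then forces $\sup_{0\leq t\leq h}|\overline{V}_N^D(t)-V(t)|\to 0$.
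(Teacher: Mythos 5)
Your proposal is correct and follows essentially the same route as the paper: both bound the difference of the integrands by $|f(u)-g(u)|$ using the lower bound $\geq 1$ on the denominators, integrate twice to obtain the factor $ah^2/2$, and then choose $h<\min\{1,\sqrt{2/a}\}$ so that $\kappa(h)=ah^2/2<1$. Your additional remarks on $H$ mapping $\mathbf{D}_{\geq 1}[0,h]$ into itself and on the case split over $a\lessgtr 2$ match the paper's treatment.
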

\begin{proof} For $0\leq t\leq h$, we have
\begin{align*}
(H\overline{V}_N^D)(t)-(HV)(t) & = a \int_0^t\int_0^x \frac{1}{\overline{V}_N^D(y)}-\frac{1}{V(y)}dy\ dx \\
& = a \int_0^t\int_0^x \frac{V(y)-\overline{V}_N^D(y)}{\overline{V}_N^D(y)V(y)}dy\ dx
\end{align*} Since $\overline{V}_N^D(t)\in \mathbf{D}_{\geq 1}[0,1]$, we have, for all $t\in[0,1]$ that $\overline{V}_N^D(t)\geq 1$. Therefore we get,
\begin{align*}
|(H\overline{V}_n^D)(t)-(HV)(t)| & \leq a \int_0^t\int_0^x |V(y)-\overline{V}_N^D(y)|dy\ dx.
\end{align*} Furthermore, by definition of the supremum, we get
\begin{align*}
|(H\overline{V}_n^D)(t)-(HV)(t)| & \leq a \int_0^t\int_0^x \sup_{0\leq y\leq h} |V(y)-\overline{V}_N^D(y)| dy\ dx\\
& = a\sup_{0\leq y\leq h} |V(y)-\overline{V}_N^D(y)|\int_0^t\int_0^x 1 dy\ dx.
\end{align*} Thus, for $0\leq t\leq h$,
\begin{align*}
|(H\overline{V}_n^D)(t)-(HV)(t)| & \leq \frac{ah^2}{2}\sup_{0\leq y\leq h} |V(y)-\overline{V}_N^D(y)|.
\end{align*} Hence,
\begin{align*}
\sup_{0\leq t\leq h} |(H\overline{V}_n^D)(t)-(HV)(t)|\leq \kappa(h)\sup_{0\leq t\leq h} |V(t)-\overline{V}_N^D(t)|
\end{align*} where $\kappa(h)=\frac{ah^2}{2}$. If $a>2$, take $h<\sqrt{\frac{2}{a}}$. Then $\kappa(h) = \frac{ah^2}{2}<1$. Otherwise, if $a\leq 2$, then $\kappa(h) = \frac{ah^2}{2} \leq h^2 < 1$, since $h<1$.
\end{proof}
From the proof of Lemma \ref{lemma:contraction_H}, it follows that the contraction property of the map $H$ on the interval $[0,h]$ depends on the size of $a$. We choose
\begin{align}
h<\begin{cases}
\sqrt{\frac{2}{a}} & \text{if}\  a>2, \\
1 & \text{if}\  a\leq 2.
\end{cases}\label{eq:suitably_h}
\end{align} Recall from \eqref{eq:k_N} that $a = \hat{r}\lambda_N^DN^2\geq 0$. Thus, for every $a\geq 0$, we choose a suitable $h$, according to \eqref{eq:suitably_h}, such that $H$ is a contraction on the interval $[0,h]$.

\subsubsection{Convergence of supremum on different intervals of $\overline{V}_N^D(t)$ to $V(t)$}\label{subsubsec:conv_on_diff_intervals}
In Sections \ref{subsubsec:convergence_of_remainder} and \ref{subsubsec:contraction}, we showed that the remainder term $\overline{R}_N(t)$ vanishes as $N\to\infty$ for every $t$ in the interval $[0,1]$ and that the map $H$ is a contractive mapping on the interval $[0,h]$, respectively. Using both results yields the convergence of $\Sup{0\leq t\leq h}|\overline{V}_N^D(t)-V(t)| \to 0$ as $N\to\infty$ in Lemma \ref{lemma:case_m=1}. Then, we show, by induction, that $\Sup{mh\leq t\leq (m+1)h}|\overline{V}_N^D(t)-V(t)|\to 0$ as $N\to\infty$ for every $m=0,\ldots,M$ such that $Mh=1$ in Lemma \ref{lemma:case_general_m}. This allows us to show that $\Sup{0\leq t\leq 1}|\overline{V}_N^D(t)-V(t)|\to 0$ as $N\to\infty$, as desired in Proposition \ref{THM:CONVERGENCE}.

Using the contraction property as shown in Lemma \ref{lemma:contraction_H}, we show the convergence of $\overline{V}_N^D(t)$ to $V(t)$ in the supremum norm.
\begin{lemma}\label{lemma:case_m=1}
Let $h$ as in \eqref{eq:suitably_h} and $\overline{V}_N^D(t)$ as defined in \eqref{eq:mappingH} and $V(t)$ as in \eqref{eq:integral_equation_V}. Then we have,
\begin{align*}
\sup_{0\leq t\leq h} |\overline{V}_N^D(t)-V(t)|\to 0,\  \text{as}\  N\to\infty.
\end{align*}
\end{lemma}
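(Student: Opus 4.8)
The plan is to combine the two ingredients already established — the vanishing of the remainder term (Corollary \ref{cor:overline_Rn}) and the contraction property of the map $H$ on $\mathbf{D}_{\geq 1}[0,h]$ (Lemma \ref{lemma:contraction_H}) — via the identity \eqref{eq:ideas_proof_conv}. Concretely, I would start from
\begin{align*}
\overline{V}_N^D(t)-V(t) = \overline{R}_N(t) + (H\overline{V}_N^D)(t)-(HV)(t),
\end{align*}
take absolute values and the supremum over $t\in[0,h]$, and apply the triangle inequality together with Lemma \ref{lemma:contraction_H} to obtain
\begin{align*}
\sup_{0\leq t\leq h}|\overline{V}_N^D(t)-V(t)| \leq \sup_{0\leq t\leq h}|\overline{R}_N(t)| + \kappa(h)\sup_{0\leq t\leq h}|\overline{V}_N^D(t)-V(t)|,
\end{align*}
with $\kappa(h)<1$.

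Before rearranging, I need to know that $S_N := \sup_{0\leq t\leq h}|\overline{V}_N^D(t)-V(t)|$ is finite, since otherwise subtracting $\kappa(h)S_N$ from both sides is not legitimate. This is the one point that requires a small argument: $V(t)=f_0(t\sqrt{a})$ is continuous on $[0,1]$ and hence bounded there, while from \eqref{eq:mappingH} together with $\overline{V}_N^D\geq 1$ (so that the inner double integral is at most $t^2/2\leq 1/2$) one gets
\begin{align*}
\overline{V}_N^D(t)\leq 1 + \frac{a}{2} + \sup_{0\leq s\leq 1}|\overline{R}_N(s)|,
\end{align*}
which is bounded uniformly in $N$ and $t$ because $\sup_{0\leq s\leq 1}|\overline{R}_N(s)|\to 0$ by Corollary \ref{cor:overline_Rn}. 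Hence $S_N<\infty$.

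With $S_N<\infty$ and $\kappa(h)<1$, rearranging yields
\begin{align*}
\sup_{0\leq t\leq h}|\overline{V}_N^D(t)-V(t)| \leq \frac{1}{1-\kappa(h)}\sup_{0\leq t\leq h}|\overline{R}_N(t)| \leq \frac{1}{1-\kappa(h)}\sup_{0\leq t\leq 1}|\overline{R}_N(t)|,
\end{align*}
and the right-hand side tends to $0$ as $N\to\infty$ by Corollary \ref{cor:overline_Rn}. This gives the claim. There is essentially no serious obstacle remaining, as all the real work sits in Lemmas \ref{lemma:Vn-Vn+1/t}, \ref{lemma:Rn} and \ref{lemma:contraction_H}; the only points to watch are the finiteness of $S_N$ noted above and the fact that $a$, and hence $h$ and $\kappa(h)$, are fixed independently of $N$, so the prefactor $1/(1-\kappa(h))$ does not blow up with $N$.
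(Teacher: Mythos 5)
Your proposal is correct and follows essentially the same route as the paper's own proof: the identity \eqref{eq:ideas_proof_conv}, the triangle inequality, Lemma \ref{lemma:contraction_H}, rearrangement using $\kappa(h)<1$, and Corollary \ref{cor:overline_Rn}. The only difference is your explicit check that $\sup_{0\leq t\leq h}|\overline{V}_N^D(t)-V(t)|$ is finite before rearranging, a small point the paper leaves implicit but which is a welcome addition rather than a deviation.
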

\begin{proof}
By Equation \eqref{eq:ideas_proof_conv} and the triangle inequality, we have
\begin{align}
\sup_{0\leq t\leq h} |\overline{V}_N^D(t)-V(t)| & = \sup_{0\leq t\leq h} |\overline{R}_N(t)+(H\overline{V}_N^D)(t)-(HV)(t)| \nonumber\\
& \leq \sup_{0\leq t\leq h}|\overline{R}_N(t)|+\sup_{0\leq t\leq h}|(H\overline{V}_N^D)(t)-(HV)(t)|. \label{eq:contraction_application}
\end{align} 
Using Lemma \ref{lemma:contraction_H} in the right-hand side of \eqref{eq:contraction_application}, we get
\begin{align*}
\sup_{0\leq t\leq h} |\overline{V}_N^D(t)-V(t)| & \leq \sup_{0\leq t\leq h}|\overline{R}_N(t)|+\kappa(h)\sup_{0\leq t\leq h}|\overline{V}_N^D(t)-V(t)|.
\end{align*} Rearranging terms yield,
\begin{align*}
\sup_{0\leq t\leq h}|\overline{V}_N^D(t)-V(t)|\leq \frac{1}{1-\kappa(h)}\sup_{0\leq t\leq h}|\overline{R}_N(t)|.
\end{align*} Remark that $\sup_{0\leq t\leq h}|\overline{R}_N(t)| \leq \sup_{0\leq t\leq 1}|\overline{R}_N(t)|$. Hence, by Corollary \ref{cor:overline_Rn},
\begin{align*}
\sup_{0\leq t\leq h}|\overline{V}_N^D(t)-V(t)|\to 0\ \text{as}\ N\to\infty.
\end{align*}
\end{proof}
Now, we are in position to prove that on every small interval $[mh,(m+1)h]$ (where $m=0,\ldots,M$ such that $Mh=1$) the supremum over $t\in[mh,(m+1)h]$ of the absolute difference between $\overline{V}_N^D(t)$ and $V(t)$ goes to zero as $N\to\infty$.
\begin{lemma}\label{lemma:case_general_m}
Let $h$ as in \eqref{eq:suitably_h}. For every $0\leq m\leq M$, let $t\in[mh,(m+1)h]$. Then
\begin{align}
\sup_{mh\leq t\leq (m+1)h} |\overline{V}_N^D(t)-V(t)|\to 0,\ \text{as}\  N\to\infty.\label{eq:induction_sup}
\end{align}
\end{lemma}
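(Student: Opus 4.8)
The plan is to prove \eqref{eq:induction_sup} by induction on $m$, the base case $m=0$ being exactly Lemma \ref{lemma:case_m=1}. So fix $1\le m\le M$ and assume, as induction hypothesis, that $\varepsilon_N:=\sup_{0\le t\le mh}|\overline{V}_N^D(t)-V(t)|\to 0$ as $N\to\infty$; this follows from Lemma \ref{lemma:case_m=1} together with \eqref{eq:induction_sup} applied to the indices $0,1,\dots,m-1$, using that $[0,mh]=\bigcup_{k=0}^{m-1}[kh,(k+1)h]$. Write $\beta_N:=\sup_{mh\le t\le(m+1)h}|\overline{V}_N^D(t)-V(t)|$; the goal is to show $\beta_N\to 0$.

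Starting from the identity \eqref{eq:ideas_proof_conv}, valid for all $t\in[0,1]$, and using $\overline{V}_N^D(y),V(y)\ge 1$ exactly as in the proof of Lemma \ref{lemma:contraction_H}, I would write
\[
|(H\overline{V}_N^D)(t)-(HV)(t)|\le a\int_0^t\int_0^s|\overline{V}_N^D(u)-V(u)|\,du\,ds .
\]
For $t\in[mh,(m+1)h]$, I split the outer integral at $mh$ and, in the part with $s>mh$, the inner integral at $mh$ as well. Every piece in which the integration variable $u$ stays in $[0,mh]$ is bounded using $|\overline{V}_N^D(u)-V(u)|\le\varepsilon_N$; since $mh\le 1$ and $t-mh\le h$, these pieces contribute at most $a(\tfrac12+h)\varepsilon_N$ (the $\tfrac12$ from $\int_0^{mh}\int_0^s$ and the $h$ from the cross term $\int_{mh}^t\int_0^{mh}$). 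The only remaining piece is $a\int_{mh}^t\int_{mh}^s|\overline{V}_N^D(u)-V(u)|\,du\,ds\le a\,\tfrac{(t-mh)^2}{2}\beta_N\le\tfrac{ah^2}{2}\beta_N=\kappa(h)\beta_N$, i.e.\ it carries exactly the contraction constant $\kappa(h)=ah^2/2<1$ from Lemma \ref{lemma:contraction_H}.

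Combining these bounds with the remainder term and taking $\sup_{mh\le t\le(m+1)h}$ gives
\[
\beta_N\le\sup_{0\le t\le 1}|\overline{R}_N(t)|+a\Bigl(\tfrac12+h\Bigr)\varepsilon_N+\kappa(h)\beta_N ,
\]
and since $\kappa(h)<1$ this rearranges to $\beta_N\le\frac{1}{1-\kappa(h)}\bigl(\sup_{0\le t\le1}|\overline{R}_N(t)|+a(\tfrac12+h)\varepsilon_N\bigr)$. Letting $N\to\infty$, the first term vanishes by Corollary \ref{cor:overline_Rn} and the second by the induction hypothesis, so $\beta_N\to 0$, which is \eqref{eq:induction_sup} for $m$; taking $m=M$ and using $[0,1]=\bigcup_{m=0}^{M-1}[mh,(m+1)h]$ then yields Proposition \ref{THM:CONVERGENCE}.

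The main obstacle is purely the bookkeeping in the double-integral split: one must be careful that the ``new interval'' contribution is bounded by $\kappa(h)\beta_N$ with precisely the same $\kappa(h)$ as in Lemma \ref{lemma:contraction_H}, while all terms touching $[0,mh]$ — including the cross term — are absorbed into the $\varepsilon_N$ factor, which works because $mh\le 1$ keeps the relevant prefactors bounded. No estimate beyond those already established in Lemmas \ref{lemma:case_m=1} and \ref{lemma:contraction_H} and Corollary \ref{cor:overline_Rn} is needed.
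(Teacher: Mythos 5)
Your proposal is correct and follows essentially the same route as the paper: strong induction on $m$, the identical splitting of the double integral at $mh$ in both the outer and inner variables, absorption of all pieces touching $[0,mh]$ into the induction-hypothesis term, and the same contraction constant $\kappa(h)=ah^2/2<1$ isolated on the new interval so that the inequality can be rearranged. The only cosmetic difference is that you use $mh\le 1$ to simplify the prefactors to $a(\tfrac12+h)$ where the paper keeps $amh^2(\tfrac m2+1)$; both are bounded and vanish with $\varepsilon_N$, so the arguments coincide.
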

\begin{proof} We prove the statement in \eqref{eq:induction_sup} by strong induction. By Lemma \ref{lemma:case_m=1}, the statement holds for $m=0$; i.e.,
\begin{align*}
\sup_{0\leq t\leq h}|\overline{V}_N^D(t)-V(t)|\to 0\ \text{as}\  N\to\infty.
\end{align*} Suppose that the statement holds for $0\leq t\leq mh$:
\begin{align}
\sup_{0\leq t\leq mh} |\overline{V}_N^D(t)-V(t)| \to 0\ \text{as}\ N\to\infty.\label{eq:induction_step_m+1}
\end{align} Then, we need to show that for $mh\leq t\leq (m+1)h$, the statement in \eqref{eq:induction_sup} holds. Before we move on with expressions of the form $\sup_{mh\leq t\leq (m+1)h}|\overline{V}_N^D(t)-V(t)|$, we first try to bound the expression $|\overline{V}_N^D(t)-V(t)|$ itself. By Equations \eqref{eq:integral_equation_V} and \eqref{eq:continuous_extension_VD}, we have
\begin{align*}
\overline{V}_N^D(t)-V(t) & = a\int_0^t\int_0^x \frac{1}{\overline{V}_N^D(y)}-\frac{1}{V(y)}dy\ dx + \overline{R}_N(t) \nonumber \\
& = a\int_0^{mh}\int_0^x \frac{1}{\overline{V}_N^D(y)}-\frac{1}{V(y)}dy\ dx +\nonumber\\
& \quad + a\int_{mh}^{t}\int_0^x \frac{1}{\overline{V}_N^D(y)}-\frac{1}{V(y)}dy\ dx + \overline{R}_N(t),
\end{align*} and since $\overline{V}_N^D(t)\in \mathbf{D}_{\geq 1}[0,1]$, we have, for all $t\in[0,1]$ that $\overline{V}_N^D(t)\geq 1$. Therefore we get,
\begin{align}
|\overline{V}_N^D(t)-V(t)| = & \leq a\int_0^{mh}\int_0^x |V(y)-\overline{V}_N^D(y)| dy\ dx + \nonumber\\
& \quad + a\int_{mh}^{t}\int_0^x |V(y)-\overline{V}_N^D(y)|dy\ dx + |\overline{R}_N(t)|.\label{eq:diff_vnt1}
\end{align}
In what follows, we find for each term in \eqref{eq:diff_vnt1} an upper bound in either terms of $\sup_{0\leq t\leq mh}|\overline{V}_N^D-V(t)|$ or $\sup_{mh\leq t\leq (m+1)h}|\overline{V}_N^D(t)-V(t)|$ or terms that vanish as $N\to\infty$. Then, by rearranging terms and using the induction step in \eqref{eq:induction_step_m+1}, we find that the statement in \eqref{eq:induction_sup} holds.

Consider the first term on the right-hand side of \eqref{eq:diff_vnt1}. By definition of the supremum, we get
\begin{align*}
a\int_0^{mh}\int_{0}^x |V(y)-\overline{V}_N^D(y)|dy\ dx & \leq a\int_0^{mh}\int_0^x \sup_{0\leq y\leq mh}|V(y)-\overline{V}_N^D(y)| dy\ dx \nonumber \\
& = \frac{a(mh)^2}{2}\sup_{0\leq y\leq mh}|V(y)-\overline{V}_N^D(y)|.
\end{align*} Now, consider the second term on the right-hand side of \eqref{eq:diff_vnt1}. 
This integral can be bounded as follows;
\begin{align*}
a\int_{mh}^{t}\int_0^x |V(y)-\overline{V}_N^D(y)|dy\ dx & = a\int_{mh}^t\int_0^{mh} |V(y)-\overline{V}_N^D(y)|dy\ dx \nonumber \\ & \quad + a\int_{mh}^t\int_{mh}^x |V(y)-\overline{V}_N^D(y)|dy\ dx \\
& \leq ah(mh)\sup_{0\leq y\leq mh}|V(y)-\overline{V}_N^D(y)| \nonumber\\ & \quad +\frac{ah^2}{2}\sup_{mh\leq y\leq (m+1)h}|V(y)-\overline{V}_N^D(y)|
\end{align*} Finally, the third term on the right-hand side of \eqref{eq:diff_vnt1} is the remainder term $|\overline{R}_N(t)|$. Continuing from \eqref{eq:diff_vnt1} and taking the supremum over the interval $m\leq t\leq(m+1)h$ of $|\overline{V}_N^D(t)-V(t)|$ on the left-hand side, yields,
\begin{multline*}
\sup_{mh\leq t\leq (m+1)h} |\overline{V}_N^D(t)-V(t)| \leq \frac{a(mh)^2}{2}\sup_{0\leq y\leq mh}|V(y)-\overline{V}_N^D(y)|+\\ +amh^2\sup_{0\leq y\leq mh}|V(y)-\overline{V}_N^D(y)| + \frac{ah^2}{2}\sup_{mh\leq y\leq (m+1)h}|V(y)-\overline{V}_N^D(y)| + |\overline{R}_N(t)|
\end{multline*} Simplifying and rearranging terms yields,
\begin{multline*}
\sup_{mh\leq t\leq (m+1)h} |\overline{V}_N^D(t)-V(t)| \leq \frac{1}{1-\frac{1}{2}ah^2} \bigg(amh^2\left(\frac{m}{2}+1\right)\sup_{0\leq y\leq mh}|V(y)-\overline{V}_N^D(y)|+\\ +|\overline{R}_N(t)| \bigg)
\end{multline*} Remark that $\sup_{mh\leq t\leq (m+1)h}|\overline{R}_N(t)| \leq \sup_{0\leq t\leq 1}|\overline{R}_N(t)|$. Hence, by Corollary \ref{cor:overline_Rn} and from the induction hypothesis that $
\Sup{0\leq t\leq mh}|\overline{V}_N^D(t)-V(t)|\to 0\ \text{as}\ N\to\infty$, it follows that
\begin{align*}
\sup_{mh\leq t\leq (m+1)h} |\overline{V}_N^D(t)-V(t)|\to 0\ \text{as}\ N\to\infty.
\end{align*}
\end{proof}
Now, we have all the ingredients to prove Theorem \ref{THM:CONVERGENCE}.

\begin{proof}[Proof of Proposition \ref{THM:CONVERGENCE}]
Let $t\in[0,1]$ and $\overline{V}_N^D(t)$ as defined in \eqref{eq:mappingH} and $V(t)$ as in \eqref{eq:integral_equation_V}. Furthermore, let $h$ as in \eqref{eq:suitably_h} and consider the partition $\{[0,h],[h,2h],\ldots,[(M-1)h,Mh]\}$ of the interval $[0,1]$. Then,
\begin{multline*}
\sup_{0\leq t\leq 1}|\overline{V}_N^D(t)-V(t)| \leq \sup_{0\leq t\leq h} |\overline{V}_N^D(t)-V(t)| +\\ + \sup_{h\leq t\leq 2h} |\overline{V}_N^D(t)-V(t)| + \cdots + \sup_{(M-1)h\leq t\leq Mh} |\overline{V}_N^D(t)-V(t)|.
\end{multline*}

By Lemma \ref{lemma:case_general_m}, we have for the supremum over each interval $[mh,(m+1)h]$, for $m=0,\ldots,M-1$, of the absolute difference of $\overline{V}_N^D(t)$ and $V(t)$ that it converges to zero as $N\to\infty$. Hence,
\begin{align*}
\sup_{0\leq t\leq 1}|\overline{V}_N^D(t)-V(t)|\to 0\ \text{as}\ N\to\infty.
\end{align*}
\end{proof}

\subsection{Numerical validation of convergence in Proposition \ref{THM:CONVERGENCE}}

We validate the convergence established in Theorem \ref{THM:CONVERGENCE}. We run the recursion $V_n^D, n=0,\ldots,N$ until node $N$ and evaluate the approximation $V(t)$ in the point $t=1$ for different values of the parameter $a$. The results obtained via the approximation are compared with the recursion values, i.e., we compare $V(1)$ (computed via \eqref{eq:V1_approx} where the function $f_0$ is defined in Lemma \ref{lemma:solution_f}) with $V_N^D$ for different values of $a$. For each value of $a$, we compute $V(1)$ and $V_N^D$ for several values of $N$, ranging from $10$ to $10^4$. One example of $V(1)$ and $V_N^D$, where $a=0.05$ is shown in Figure \ref{fig:convergence_V1} and Table \ref{tab:convergence_VnD}. In the case that $N$ is small, e.g. $N=10$, the absolute and relative error are only $\approx 0.2\%$.
\begin{figure}
\centering
\includegraphics[scale=0.5]{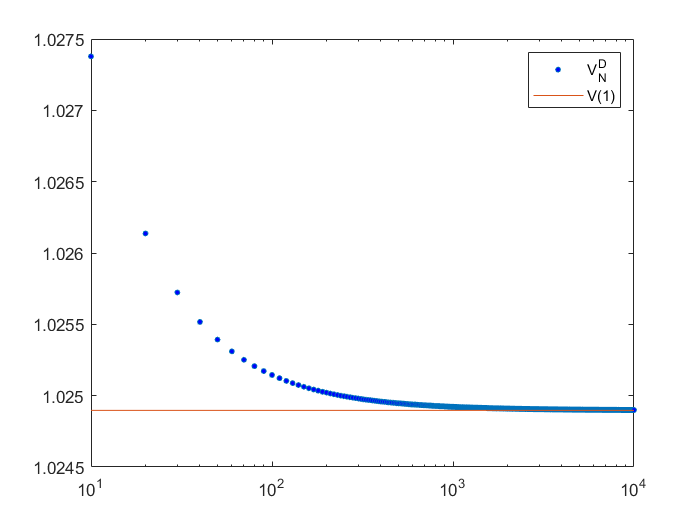}
\caption{Recursion values $V_N^D$ (blue solid circles) and approximation $V(1)$ (red solid line), for $a=0.05$.}
\label{fig:convergence_V1}
\end{figure}

\begin{table}[h!]
\begin{center}
 \begin{tabular}{|c r r c c|}
 \hline
 \multicolumn{5}{|c|}{$a=0.05$} \\
 \hline
 $N$ & $V_N^D$ & $V(1)$ & $|V(1)-V_N^D|$ & $\frac{|V(1)-V_N^D|}{V_N^D}$  \\ [0.5ex]
 \hline\hline
 10 & 1.02737778 & 1.02489702 & 0.00248075 & 0.00241464\\
 \hline
 $10^2$ & 1.02514499 & 1.02489702 & 0.00024188 & 0.00297667\\
 \hline
 $10^3$ & 1.02492182 & 1.02489702 & 0.00002419 & 0.00029824\\
 \hline
 $10^4$ & 1.02489950 & 1.02489702 & 0.00000241& 0.00002983\\
 \hline
 $10^5$ & 1.02489728 & 1.02489702 & 0.00000024 & 0.00000298\\
 \hline
\end{tabular}
\end{center}
\caption{Absolute and relative error of different values of $N$ for $a=0.05$.}
\label{tab:convergence_VnD}
\end{table}

One can see from Figure \ref{fig:convergence_V1} that, as $N$ gets bigger, $V_N^D$ decreases to $V(1)$. Thus, the approximation $V(1)$ offers a close approximation of the voltage $V_N^D$ under the Distflow model, especially for large $N$. The errors are bigger for small $N$.

\section{Integral equation}\label{sec:integral_equation}
\begin{lemma}\label{lemma:solution_f}
For $t\geq 0,k>0,y>0,w\geq 0$, the nonlinear differential equation
\begin{align*}
f''(t) = \frac{k}{f(t)}
\end{align*} with initial conditions $f(0)=y$ and $f'(0)=w$ has the exact solution $f(t) = \gamma f_0(\alpha+\beta t)$. Here, $f_0$ is given by
\begin{align*}
f_0(x) = \exp(U^2(x)),\quad \text{for}~x\geq 0,
\end{align*} where $U(x)$, for $x\geq 0$, is given by
\begin{align}\label{eq:Ux}
\int_0^{U(x)}\exp(u^2)~du = \frac{x}{\sqrt{2}},
\end{align}and
\begin{align*}
\alpha & = \sqrt{2}\int_0^\frac{w}{\sqrt{2k}} \exp(u^2)~du,\\
\beta & = \frac{\sqrt{k}}{y}\exp\left(\frac{w^2}{2k}\right),\\
\gamma & = y\exp\left(\frac{-w^2}{2k} \right).
\end{align*}
\label{LEMMA:DIFF_EQ1}
\end{lemma}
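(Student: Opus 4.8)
The plan is to verify directly that the proposed closed form solves the initial value problem and then to invoke a standard uniqueness theorem. The argument splits into three parts: establishing the elementary properties of the base function $f_0$, reducing the general problem to $f_0$ via the affine change of variables $x = \alpha + \beta t$, and matching the three constants $\alpha,\beta,\gamma$ to the initial data.

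First I would record the properties of $f_0$. Since $z \mapsto \int_0^z \exp(u^2)\,du$ is continuous, strictly increasing, vanishes at $0$ and tends to $\infty$, it is a bijection of $[0,\infty)$ onto itself with nowhere-vanishing derivative, so the function $U$ defined by \eqref{eq:Ux} is well-defined, smooth and strictly increasing on $[0,\infty)$ with $U(0) = 0$. Differentiating \eqref{eq:Ux} gives $\exp(U^2(x))\,U'(x) = 1/\sqrt2$, i.e.\ $U'(x) = \frac{1}{\sqrt2}\exp(-U^2(x))$. Hence $f_0'(x) = \exp(U^2(x))\cdot 2U(x)U'(x) = \sqrt2\,U(x)$, and differentiating once more $f_0''(x) = \sqrt2\,U'(x) = \exp(-U^2(x)) = 1/f_0(x)$. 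Combined with $f_0(0) = \exp(U^2(0)) = 1$ and $f_0'(0) = \sqrt2\,U(0) = 0$, this shows that $f_0$ solves $f_0'' = 1/f_0$ with $f_0(0)=1$, $f_0'(0)=0$; note also that $f_0$ is nondecreasing, hence $f_0 \ge 1 > 0$ on $[0,\infty)$.

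Next I would substitute $f(t) = \gamma f_0(\alpha + \beta t)$. Since $k,y>0$ and $w\ge 0$ we have $\beta>0$ and $\alpha\ge 0$, so $\alpha+\beta t$ stays in $[0,\infty)$ for all $t\ge 0$ and $f(t)\ge\gamma>0$ there. Differentiating, $f'(t) = \gamma\beta f_0'(\alpha+\beta t)$ and $f''(t) = \gamma\beta^2 f_0''(\alpha+\beta t) = \gamma\beta^2/f_0(\alpha+\beta t) = (\gamma\beta)^2/f(t)$, so $f'' = k/f$ holds as soon as $(\gamma\beta)^2 = k$. With the stated $\beta$ and $\gamma$ one computes $\gamma\beta = y\exp(-w^2/2k)\cdot\frac{\sqrt k}{y}\exp(w^2/2k) = \sqrt k$, exactly what is needed. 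For the initial values: $f(0) = \gamma f_0(\alpha) = \gamma\exp(U^2(\alpha))$, and since $\alpha = \sqrt2\int_0^{w/\sqrt{2k}}\exp(u^2)\,du$, relation \eqref{eq:Ux} forces $U(\alpha) = w/\sqrt{2k}$, hence $f(0) = \gamma\exp(w^2/2k) = y$; likewise $f'(0) = \gamma\beta f_0'(\alpha) = \sqrt k\cdot\sqrt2\,U(\alpha) = \sqrt k\cdot\sqrt2\cdot\frac{w}{\sqrt{2k}} = w$.

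Finally I would argue uniqueness: on the open region $\{f>0\}$ the map $f\mapsto k/f$ is $C^1$, hence locally Lipschitz, so by the Picard--Lindel\"of theorem the second-order initial value problem has at most one positive solution; as the candidate $f = \gamma f_0(\alpha+\beta\,\cdot)$ is positive (indeed nondecreasing and bounded below by $\gamma$) on all of $[0,\infty)$, it coincides with that solution on the whole half-line. I do not anticipate a serious obstacle; the only points needing care are the well-definedness and monotonicity of $U$ (so that $f_0$, the constant $\alpha$ and the substitution all make sense) and the verification that $f$ stays positive, which is what lets the uniqueness argument run on the entire interval $[0,\infty)$ rather than only locally.
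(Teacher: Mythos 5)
Your proposal is correct: all the computations ($U'(x)=\tfrac{1}{\sqrt2}e^{-U^2(x)}$, $f_0'=\sqrt2\,U$, $f_0''=1/f_0$, $\gamma\beta=\sqrt k$, $U(\alpha)=w/\sqrt{2k}$, and the resulting initial values) check out, and the positivity/monotonicity observation that makes the uniqueness argument global is the right point to flag. The route differs from the paper's in the treatment of the base case. The paper \emph{derives} $f_0$ rather than verifying it: multiplying $f_0''=1/f_0$ by $f_0'$ and integrating gives the first integral $\tfrac12(f_0')^2=\ln f_0$, separating variables and substituting $v=\sqrt{\ln s}$ then produces $\int_0^{\sqrt{\ln f_0(t)}}e^{v^2}dv=t/\sqrt2$, which is precisely the defining relation for $U$; this shows where the imaginary-error-function inverse comes from and simultaneously shows that any solution of the base problem must have this form, so no separate uniqueness step is invoked. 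Your version instead takes the formula as given, verifies it by direct differentiation, and supplies uniqueness via Picard--Lindel\"of on the region $\{f>0\}$. The second half of both arguments is identical: the affine ansatz $f(t)=\gamma f_0(\alpha+\beta t)$ and the matching conditions $\gamma\beta=\sqrt k$, $\gamma f_0(\alpha)=y$, $f_0'(\alpha)=w/\sqrt k$. What each buys: the paper's derivation is self-motivating and avoids any appeal to an ODE uniqueness theorem (at the minor cost of a division by $\sqrt{2\ln f_0(t)}$ that degenerates at $t=0$, which your verification sidesteps entirely); your verification is shorter and logically cleaner but presupposes the answer and needs the explicit uniqueness argument, which you do supply correctly, including the observation that $f''>0$ and $f'(0)=w\ge 0$ keep $f$ bounded below by $y>0$ so the local uniqueness extends to all of $[0,\infty)$.
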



\begin{proof}[Proof of Lemma \ref{LEMMA:DIFF_EQ1}]
From the differential equation
\begin{align}
f''(t) = 1/f(t),\ t\geq 0 \label{eq:diff_f_app}
\end{align} we get
\begin{align}
f'(u)f''(u) = f'(u)/f(u),\ 0\leq u\leq t.\label{eq:f4_app}
\end{align} Integrating Equation \eqref{eq:f4_app} over $u$ from 0 to $t$ using $f(0)=1, f'(0)=0$ we get
\begin{align*}
\int_0^t f'(u)f''(u)du = \frac{1}{2}(f'(t))^2 = \int_0^t \frac{kf'(u)}{f(u)} = \ln(f(t)).
\end{align*} Hence, for $t>0$,
\begin{align}
\frac{f'(t)}{\left(2\ln(f(t))\right)^{\frac{1}{2}}} = 1.\label{eq:alternative_f_prime}
\end{align}
Integrating $f'(u)/(2\ln(f(t)))^{\frac{1}{2}})=1$ from $u=0$ to $u=t$, while substituting $s=f(u)\in [1,f(t)]$, we get
\begin{align}
\int_1^{f(t)} \frac{1}{(2\ln(s))^{\frac{1}{2}}}ds = t.\label{eq:alternative_rep_f}
\end{align} Substituting $v=(\ln(s))^{\frac{1}{2}}, s=\exp(v^2),ds=2v\exp(v^2)dv$ in the integral \eqref{eq:alternative_rep_f}, we get
\begin{align*}
\int_0^{(\ln(f(t))^{\frac{1}{2}}}\frac{2v\exp(v^2)}{\sqrt{2}v} dv = t,
\end{align*} i.e.,
\begin{align*}
\int_0^{(\ln(f(t))^{\frac{1}{2}}}\exp(v^2)dv = t/\sqrt{2}.
\end{align*} Hence, when we define $U(t)$ by
\begin{align}
\int_0^{U(t)}\exp(v^2)dv = t/\sqrt{2},\label{eq:Ut}
\end{align} we have
\begin{align}
(\ln(f(t))^{\frac{1}{2}}=U(t), \text{i.e.}, f(t)=\exp(U^2(t)).\label{eq:ln(ft)}
\end{align} Observe that from \eqref{eq:alternative_f_prime} and \eqref{eq:ln(ft)} we get,
\begin{align}
f'(t) = \sqrt{2}U(t), t\geq 0.\label{eq:f'(t)}
\end{align} We denote in the sequel the solution of $f''(t)=1/f(t), t\geq 0$, with $f(0)=1,f'(0)=0$ by $f_0(t)$.
Next, for given $w\geq 0$ and $y,k>0$ we consider the following initial value problem:
\begin{align}
f''(t) = \frac{k}{f(t)}~\text{for}~ t\geq 0; f(0)=y\ \text{and}\ f'(0)=w.\label{eq:ivp_equation2}
\end{align}
We find unique, explicit and positive values $\gamma,\alpha$ and $\beta$ such that
\begin{align}
f(t) = \gamma f_0(\alpha+\beta t)~\text{for}~t\geq 0. \label{eq:sol_ft}
\end{align} The function $f(t)$ in \eqref{eq:sol_ft} fulfills the conditions in \eqref{eq:ivp_equation2},
\begin{align*}
\begin{cases}
f''(t) = {\beta}^2{\gamma}^2f_0^{''}(\alpha+\beta t)f_0(\alpha+\beta t) \overset{\eqref{eq:diff_f_app}}{=} {\beta}^2{\gamma}^2 = k,\\
f(0) = \gamma f_0(\alpha) = y,\\
f'(0) = \beta\gamma f_0^{'}(\alpha) = w,
\end{cases}
\end{align*} i.e., if and only if
\begin{align*}
\begin{cases}
\gamma\beta = \sqrt{k},\\
\gamma f_0(\alpha) = y,\\
f_0^{'}(\alpha) = \frac{w}{\sqrt{k}}.
\end{cases}
\end{align*}
From $f_0'(\alpha)=\frac{w}{\sqrt{2k}}$ we get $U(\alpha)=\frac{w}{\sqrt{2k}}$ by \eqref{eq:f'(t)}, and $w$, from \eqref{eq:Ut}, we find
\begin{align*}
\int_0^\frac{w}{\sqrt{2k}} \exp(u^2)du = \frac{\alpha}{\sqrt{2}}.
\end{align*} Hence, subsequently we get
\begin{align*}
\alpha & = \sqrt{2}\int_0^\frac{w}{\sqrt{2k}} \exp(u^2)du, \\
\gamma & = \frac{y}{f_0(\alpha)} = \frac{y}{\exp(U(\alpha)^2)} = \frac{y}{\exp\left(\frac{w^2}{2k}\right)}, \\
\beta & = \frac{\sqrt{k}}{\frac{y}{\exp\left(\frac{w^2}{2k}\right)}} = \frac{\sqrt{k}}{y}\exp\left(\frac{w^2}{2k}\right).
\end{align*}
\end{proof}

Notice that we do not find an elementary closed-form solution of the function $f$, since $f$ is given in terms of $U(x)$. The function $U(x)$, for $x\geq 0$, is given by Equation \eqref{eq:Ux}. The left-hand side of \eqref{eq:Ux} is equal to $\frac{1}{2}\sqrt{\pi} \text{erfi}(U(x))$ where  $\text{erfi}(z)$ is the imaginary error function.

\section{Comparison of stability regions}\label{sec:appendix_comparison}
The critical arrival rates for the Distflow and the Linearized Distflow models are given in \eqref{eq:critical_lambda} and  \eqref{eq:critical_lambda_L}, respectively. To compare these critical rates, we use \eqref{eq:P(delta)} and show that $P(\Delta)$ is a strictly decreasing function.
\begin{proof}
We let
\begin{align*}
P(\Delta) = 2Q^2(\Delta),
\end{align*} where
\begin{align*}
Q(\Delta) := \frac{1-\Delta}{\sqrt{1-(1-\Delta)^2}}\int_0^{\sqrt{\ln\left(\frac{1}{1-\Delta} \right)}}\exp(u^2)du,\ 0< \Delta\leq\frac{1}{2}.
\end{align*} Set $y=\sqrt{\ln\left(\frac{1}{1-\Delta} \right)}\in[0,\infty)$. Then,
\begin{align*}
Q(\Delta) = \frac{\exp(-y^2)}{\sqrt{1-\exp(-2y^2)}}\int_0^y \exp(u^2)du  = \frac{1}{\sqrt{\exp(2y^2)-1}}\int_0^y \exp(u^2)du =: H(y).
\end{align*} Since $y$ is a strictly increasing function of $\Delta\in[0,\frac{1}{2}]$, it is sufficient to show that $H(y)$ is a strictly decreasing function of $y$. We compute
\begin{align*}
H'(y) 
& = (\exp(2y^2)-1)^{-\frac{1}{2}}\left(-\frac{2y\exp(2y^2)}{\exp(2y^2)-1}\int_0^y \exp(u^2) du+\exp(y^2) \right).
\end{align*} Therefore,
\begin{align*}
H'(y)<0 \iff \int_0^y \exp(u^2) > \frac{\exp(y^2)}{\frac{2y\exp(2y^2)}{\exp(2y^2)-1}} = \frac{\sinh(y^2)}{y}.
\end{align*} 
We have,
\begin{align*}
\int_0^y \exp(u^2)du\Big|_{y=0} = 0 = \frac{\sinh(y^2)}{y}\Big|_{y=0},
\end{align*} so it is enough to show that
\begin{align*}
\frac{d}{dy}\left(\int_0^y \exp(u^2)du \right) = \exp(y^2) > \frac{d}{dy}\left(\frac{\sinh(y^2)}{y} \right),\ y>0.
\end{align*} We compute
\begin{align*}
\frac{d}{dy}\left(\frac{\sinh(y^2)}{y} \right) & = \frac{2y\cosh(y^2)y-\sinh(y^2)}{y^2} \\
& = 2\cosh(y^2)-\frac{\sinh(y^2)}{y^2} = \exp(y^2)+\exp(-y^2)-\frac{\sinh(y^2)}{y^2},
\end{align*} and so, it is sufficient to show
\begin{align*}
\exp(-y^2)-\frac{\sinh(y^2)}{y^2}<0, y>0.
\end{align*} We have,
\begin{align*}
\exp(-y^2)-\frac{\sinh(y^2)}{y^2} 
& = -\exp(-y^2)\frac{\exp(2y^2)-1-2y^2}{2y^2} < 0,
\end{align*} since $\exp(t)-1-t>0$ for $t>0$. Furthermore,
$P(0) = 1$ since $H(y)\to\frac{1}{\sqrt{2}}$ as $y\to 0$ and $P(\frac{1}{2})=\frac{\pi}{6}\text{erfi}\left(\sqrt{\ln(2)} \right)^2\approx 0.77$.
\end{proof}

\section{Notation}\label{sec:notation}
\begin{itemize}\setlength\itemsep{-1em}
\item $N$: number of charging stations
\item $\lambda$: arrival rate of EVs at each charging station
\item $\boldsymbol{\lambda} = (\lambda,\ldots,\lambda)$: the arrival rate at each charging station
\item $\lambda_c$: critical arrival rate for EVs
\item $\lambda_c^D$: critical arrival rate under the Distflow model
\item $\lambda_c^L$: critical arrival rate under the Linearized Distflow model
\item $\mathbf{X}(t)=(X_1(t),\ldots,X_N(t))$: the number of EVs at each charging station at time $t$
\item $\mathbf{p}=(p_1(t),\ldots,p_N(t))$: the power allocated to each charging station at time $t$
\item $g(\cdot),h(\cdot)$: auxiliary functions to define utility-maximizing mechanism
\item $\mathcal{C}$: $N$-dimensional vector space that contains the distribution network constraints
\item $\mathcal{G}=(\mathcal{I},\mathcal{E})$: directed graph
\item $\mathcal{I}$: set of nodes
\item $\mathcal{E}$: set of edges
\item $\epsilon_{ij}$: edge
\item $z$: impedance on edge
\item $r$: resistance on edge
\item $x$: reactance on edge.
\item $\tilde{V}_j$: real voltage at node $j\in\mathcal{I}$
\item $\tilde{V}_j^L$: real voltage at node $j\in\mathcal{I}$ under Linearized Distflow model
\item $\tilde{V}_j^D$: real voltage at node $j\in\mathcal{I}$ under Distflow model
\item $W_{ij}$: transformed voltage; product of real voltage at node $i,j\in\mathcal{I}$ (after relabeling)
\item $W_{ij}^L$: transformed voltage; product of real voltage at node $i,j\in\mathcal{I}$ under Linearized Distflow model (after relabeling)
\item $W_{ij}^D$: transformed voltage; product of real voltage at node $i,j\in\mathcal{I}$ under Distflow model (after relabeling)
\item $\tilde{s}_i$: complex power consumption at node $i\in\mathcal{I}$
\item $\tilde{p}_i$: active power consumption at node $i\in\mathcal{I}$
\item $\tilde{q}_i$: reactive power consumption at node $i\in\mathcal{I}$
\item $I_{ij}$: complex current on edge $\epsilon_{ij}\in\mathcal{E}$
\item $\tilde{S}_{ij}$: complex power flowing over edge $\epsilon_{ij}\in\mathcal{E}$
\item $\tilde{P}_{ij}$: active power flowing over edge $\epsilon_{ij}\in\mathcal{E}$
\item $\tilde{Q}_{ij}$: reactive power flowing over edge $\epsilon_{ij}\in\mathcal{E}$
\item $\Delta$: bound on the maximal voltage drop
\item $k_N$: product of $r$ and arrival rate $\lambda_c^D$.
\end{itemize}
\end{appendices}

\bibliographystyle{apa}
\bibliography{Comparison_of_stability_regions_for_a_line_distribution_network_with_stochastic_load_demands}
\end{document}